\documentclass[draft,11pt,twoside,nolimits,envcountsame,envcountresetchap]{svmult}
\usepackage{amsmath,amsfonts}
\usepackage{enumerate}
\usepackage{mathrsfs}
\usepackage{eucal}

%\usepackage[notcite,notref]{showkeys}
%%%%%%%%%%%%%%%%%%%%%%%%%%%%%%%%%%%%%%%%%%%%%%%%%%%%%%%%%%
%%%%%%%%%%%%%%%%%%%%%%%%%  Formats for our remarks %%%%%%%
%\newcounter{rev}
%\setcounter{rev}{0}
%\newcommand{\rev}[1]{
%    \marginpar{\refstepcounter{rev}{\small\therev(Sz.R.): #1}}}
%
%%%%\newcommand{\corr}{\Calculation error corrected}}
%%%%\newcommand{\err}{\rev{Error - difficult to correct, and even if
%%%%corrected, gives different result}}
%%%%\newcommand{\desc}{\rev{Description revised}}
%%%%\newcommand{\gu}{\rev{I could not follow this}}
%
%\newcounter{reb}
%\setcounter{reb}{0}
%\newcommand{\reb}[1]{
%    \marginpar{\refstepcounter{reb}{\small\thereb(B.F.): #1}}}
%%%%%%%%%%%%%%%%%%%%%%%%%%%%%%%%%%%%%%%%%%%%%%%%%%%%%%%%%%
\def\nm{\|\kern-1pt|}
\def\Ce{\mathrm{C}}

\def\AP{\mathrm{UAP}}
\def\Cb{\mathrm{BC}}
\def\BC{\Cb}
\def\BUC{\mathrm{BUC}}
\def\Ell{\mathrm{L}}
\def\Bb{\mathrm{B}}
\def\BVb{\mathrm{BV}}
\def\co{\mathrm{c}_0}
\def\Lip{\mathrm{Lip}}

\def\aap{\mathrm{aap}}

\def\Lipb{\Lip_b}
\def\SSS{\mathrm{S}}
\def\BV{\mathrm{BV}}

\def\BVb{\BV_b}

\def\MM{\mathrm{M}}
\def\MMc{\MM_c}

% % % % linear operators

\def\LLL{\mathscr{L}}
\def\Id{\mathrm{I}}
\def\ran{\mathrm{rg}}

% % %
\def\pow{\mathscr{P}}
\def\FC{\mathcal{F}}
\def\GFC{\mathcal{G}}

\def\id{\mathrm{id}}
\def\clran{\overline{\mathrm{rg}}}
\def\clconv{\overline{\mathrm{conv}}}

\def\NN{\mathbb{N}}
\def\QQ{\mathbb{Q}}
\def\ZZ{\mathbb{Z}}
\def\RR{\mathbb{R}}

\def\Ball{\mathrm{B}}

% % % % % % % % % %
\def\dd{\mathrm{d}}
\def\ee{\mathrm{e}}
% % % %

\def\Diff{\Delta}

\def\eqegy{=}

%%%%%%%%%%%%%%%%%%%%%%%%%%%%%%%%%%%%%%%%%%%%
%%                                                                                          %%
%%                                                                                          %%
%%                                                                                          %%
%%%%%%%%%%%%%%%%%%%%%%%%%%%%%%%%%%%%%%%%%%%%

\def\rest{\mid}

%%%%%%%%%%%%%%%%%%%%%%%%%%%%%%%%%%%%%%%%%%%%
%%                                                                                          %%
%%                                                                                          %%
%%                                                                                          %%
%%%%%%%%%%%%%%%%%%%%%%%%%%%%%%%%%%%%%%%%%%%%
\numberwithin{theorem}{section}
\newtheorem{condition}[theorem]{Condition}
%

%%%%%%%%%%%%%%%%%%%%%%%%%%%%%%%%%%%%%%%%%%%%
%%                                                                                          %%
%%                                                                                          %%
%%                                                                                          %%
%%%%%%%%%%%%%%%%%%%%%%%%%%%%%%%%%%%%%%%%%%%%

\numberwithin{subsubsection}{section}
\begin{document}

\smartqed
\title*{The periodic decomposition problem}
\author{B\'alint Farkas and Szil\'ard Gy. R\'ev\'esz}
\institute{Bergische Universit\"at Wuppertal, Germany \email{farkas@math.uni-wuppertal.de}
\and Alfr\'ed R\'enyi Institute of Mathematics, Hungarian Academy of Sciences; Kuwait University, Kuwait \email{szilard.revesz@renyi.hu} \email{szilard@sci.kuniv.edu.kw}}

\motto[0.75\hsize]{Dedicated to Imre Z.{} Ruzsa on the occasion  of his 60$^\text{th}$ birthday.}
\maketitle

\begingroup
\def\thefootnote{\hskip-0.5em}
\footnotetext{B.F. was supported in part by the Hungarian National Foundation for Scientific Research, Project \#K-100461, Sz.R. was supported in part by the Hungarian National Foundation for Scientific Research, Project \#'s
K-81658, %%% régi Ruzsa
K-100461, %%% saját
NK-104183, %%% új Pintz
K-109789. %%% új Ruzsa
}

\endgroup
\abstract{If a function $f:\RR\to\RR$ can be represented as the sum of $n$ periodic functions as $f=f_1+\dots+f_n$ with $f(x+\alpha_j)=f(x)$ ($j=1,\dots,n$), then it also satisfies a corresponding $n$-order difference equation $\Delta_{\alpha_1}\dots\Delta_{\alpha_n} f=0$. The periodic decomposition problem asks for the converse implication, which  may hold or fail depending on the context (on the system  of periods, on the function class in which the problem is considered, etc.). The problem has natural extensions and ramifications in various directions, and is related to several other problems in real analysis, Fourier and functional analysis. We give a survey about the available methods and results, and present a number of intriguing open problems.
}

\keywords{Periodic functions, periodic decomposition, difference equation, almost periodic and mean periodic functions,  transformation invariant functions, functions with values in a group, operator semigroups.}

%%%%%%%%%%%%%%%%%%%%%%%%%%%%%%%%%%%%%%%%%%%%
%%                                                                                          %%
%%                                                                                          %%
%%                                                                                          %%
%%%%%%%%%%%%%%%%%%%%%%%%%%%%%%%%%%%%%%%%%%%%

\section{Introduction}\label{sec:Intro}

Let $f:\RR\to \RR$ be a function with
\begin{equation}\label{eq:periodd}
f=f_1+\cdots +f_n,\qquad f_j(x+\alpha_j)=f_j(x)
\quad{\forall x\in\RR, j=1,\dots,n},
\end{equation}
where $\alpha_j\in\RR$ are fixed real numbers, we call this an \emph{$(\alpha_1,\dots,\alpha_n)$-periodic decomposition} of $f$. For $\alpha\in\RR$ let
$\Diff_\alpha$ denote the (forward) \emph{difference operator}
\begin{equation*}
\Diff_\alpha:\RR^\RR\to\RR^\RR,\quad\Diff_\alpha g(x):=g(x+\alpha)-g(x).
\end{equation*}
Then the $\alpha_i$-periodicity of $f_i$ above means
$\Diff_{\alpha_i}f_i\eqegy0$. The difference operators commute,
so
\begin{equation}\label{eq:kern}
\Diff_{\alpha_1}\Diff_{\alpha_2}\dots\Diff_{\alpha_n}f \eqegy 0.
\end{equation}
\begin{problem}[I.{}Z.{} Ruzsa, 70s]\label{p:Ruzsa} Does the converse implication
``\eqref{eq:kern} $\Rightarrow$ \eqref{eq:periodd}'' hold true?
\end{problem}
Naturally, this question can be posed \emph{in any given function class} $\FC\subseteq \RR^{\RR}$.
\begin{definition}\label{d:dprall}
Let $\FC\subseteq \RR^\RR$ be a set of functions. With $n\in \NN$, $n\geq1$, and $\alpha_1,\dots,\alpha_n\in \RR$ given, the function class $\FC$ is said to have the \emph{decomposition property with respect to $\alpha_1,\dots,\alpha_n$} if for each $f\in \FC$  satisfying \eqref{eq:kern} a periodic decomposition \eqref{eq:periodd} exists with $f_j\in \FC$ ($j=1,\dots,n$).
Furthermore, the function class $\FC$ has the $n$-\emph{decomposition property} if it has the decomposition property for every choice of $\alpha_1\dots, \alpha_n\in \RR$, and $\FC$ has the \emph{decomposition property} if it has the $n$-decomposition property for each integer $n\geq1$.
\end{definition}

Note that $\RR^\RR$ or $\Ce(\RR)$ (space of continuous functions)  do \emph{not} have the $n$-decomposition property for $n\geq 2$.  Indeed, let $n=2$ and $\alpha_1=\alpha_2=\alpha$. The identity function $\id (x):=x$ satisfies $\Delta_{\alpha} \Delta_{\alpha} \id
\eqegy 0$, but it fails to be $\alpha$-periodic. So the implication ``\eqref{eq:kern} $\Rightarrow$ \eqref{eq:periodd}'' fails. As a matter of fact, a function class containing the identity does not have the decomposition property.

The above choice for $\alpha_1,\alpha_2$ hides the nature of the problem a bit: The existence of periodic decomposition may depend on the system $\alpha_1,\dots,\alpha_n$ of prescribed periods. If we take $\alpha_1=1$ and $\alpha_2=\sqrt{2}$ the arguments above do not work. And in fact, \emph{if $\alpha_1$ and $\alpha_2$ are incommensurable} (i.e., $\alpha_1\ZZ\cap \alpha_2\ZZ=\{0\}$) then $f=\id:\RR\to\RR$ has a decomposition as $f=f_1+f_2$, $\Diff_{\alpha_j}f_j=0$.
\begin{proposition}
Let $\alpha_1,\alpha_2\in \RR$ be incommensurable. Then each function $f:\RR\to \RR$ satisfying \eqref{eq:kern} can be written as $f=f_1+f_2$, with $f_1, f_2$ being $\alpha_1$ and $\alpha_2$ periodic, respectively. That is, $\RR^\RR$ has the decomposition property with respect to any system of two incommensurable reals.
\end{proposition}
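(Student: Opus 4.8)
The plan is to reduce the statement to a purely combinatorial fact on the group $\Lambda:=\alpha_1\ZZ+\alpha_2\ZZ$ and then solve the problem one coset at a time. First I would record the algebraic content of incommensurability: if $m\alpha_1+n\alpha_2=m'\alpha_1+n'\alpha_2$ for integers $m,n,m',n'$, then $(m-m')\alpha_1=(n'-n)\alpha_2$ lies in $\alpha_1\ZZ\cap\alpha_2\ZZ=\{0\}$, so that $m=m'$ and $n=n'$ (the degenerate cases $\alpha_1=0$ or $\alpha_2=0$ are trivial, as the hypothesis is then vacuous and one may take $f_2=0$, $f_1=f$). Hence every element of $\Lambda$ has a \emph{unique} representation $m\alpha_1+n\alpha_2$, and $\Lambda\cong\ZZ^2$.

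Next, using the axiom of choice, I would fix a set $T\subseteq\RR$ of representatives for the cosets of $\Lambda$, so that each $x\in\RR$ is written uniquely as $x=t+m\alpha_1+n\alpha_2$ with $t\in T$ and $m,n\in\ZZ$. On a fixed coset I introduce $F:\ZZ^2\to\RR$, $F(m,n):=f(t+m\alpha_1+n\alpha_2)$. The hypothesis \eqref{eq:kern} translates verbatim into the vanishing of the mixed second difference, $F(m+1,n+1)-F(m+1,n)-F(m,n+1)+F(m,n)=0$ for all $m,n\in\ZZ$.

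The key lemma is that any $F:\ZZ^2\to\RR$ with vanishing mixed second difference splits as $F(m,n)=\psi(m)+\phi(n)$. I would prove it by setting $\psi(m):=F(m,0)$ and $\phi(n):=F(0,n)-F(0,0)$, and showing that $H(m,n):=F(m,n)-\psi(m)-\phi(n)$ is identically zero. Indeed $H$ vanishes on both coordinate axes, it inherits the vanishing mixed second difference, and hence the recurrence $H(m+1,n+1)=H(m+1,n)+H(m,n+1)-H(m,n)$ (together with its rearrangements toward the other quadrants) forces $H\equiv0$ by a short induction on $|m|+|n|$.

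Finally I would glue the pieces: for $x=t+m\alpha_1+n\alpha_2$ set $f_2(x):=f(t+m\alpha_1)$ and $f_1(x):=f(t+n\alpha_2)-f(t)$. Well-definedness rests precisely on the uniqueness of $(m,n)$ obtained above, so this is where incommensurability does its essential work. By the lemma $f_1(x)+f_2(x)=\phi(n)+\psi(m)=F(m,n)=f(x)$, while $f_1$ depends only on $n$ and is therefore $\alpha_1$-periodic, and $f_2$ depends only on $m$ and is therefore $\alpha_2$-periodic. I expect the only genuine subtlety to be the careful bookkeeping of well-definedness across cosets; since we work in $\RR^\RR$ with no regularity constraints, the axiom of choice removes every analytic obstacle and the remaining argument is elementary.
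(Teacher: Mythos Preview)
Your proof is correct and follows essentially the same approach as the paper: partition $\RR$ into cosets of $\Lambda=\alpha_1\ZZ+\alpha_2\ZZ$ via the axiom of choice, use incommensurability to identify each coset with $\ZZ^2$, and then define $f_1,f_2$ on a coset by the explicit formulas $f_1(t+m\alpha_1+n\alpha_2)=f(t+n\alpha_2)-f(t)$ and $f_2(t+m\alpha_1+n\alpha_2)=f(t+m\alpha_1)$. The only difference is presentational: you isolate the identity $F(m,n)=F(m,0)+F(0,n)-F(0,0)$ as a separate lemma on $\ZZ^2$ with an explicit induction, whereas the paper invokes it in one line directly from \eqref{eq:kern}.
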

\begin{proof}
Using the \emph{axiom of choice}, we can select one
representative from each of the classes of the equivalence
$x\sim y \Leftrightarrow x-y\in \alpha_1\ZZ+\alpha_2\ZZ$.

On each class we construct our $f_j$ as follows. For the fixed
class representative $y\in\RR$ take
$f_1(y+k\alpha_1+m\alpha_2):=f(y+m\alpha_2)$ and
$f_2(y+k\alpha_1+m\alpha_2):=f(y+k\alpha_1)-f(y)$. Then $f_j$ are
$\alpha_j$-periodic and by \eqref{eq:kern}
\begin{align*}
f(y+k\alpha_1+m\alpha_2)& =f(y+m\alpha_2)+f(y+k\alpha_1)-f(y)\\& =
f_1(y+k\alpha_1+m\alpha_2)+{f_2(y+k\alpha_1+m\alpha_2)}.
\end{align*}
This ends the construction of a periodic decomposition. \qed
\end{proof}

The above decomposition can be far worse, than the function itself. E.g., $f=\id$ is continuous, while $f_1$ and $f_2$ are certainly not, for continuous periodic functions, hence also their sums, are necessarily bounded. That $f=\id$ does not even have a measurable decomposition, is proved in \cite{Natkaniec} by a somewhat involved argumentation.

In fact, no function with $\lim_{x\to\infty}f(x)=\infty$ can have a measurable periodic decomposition. Indeed, let $\varepsilon, \eta >0$ be fixed arbitrarily, and assume that $f$ has a measurable decomposition \eqref{eq:periodd}. Then for each $j=1,\dots,n$, $f_j$ must be bounded on $[0,\alpha_j]$ by some constant $K_j<\infty$ apart from an exceptional set $A_j\subseteq [0,\alpha_j]$ of measure $|A_j|<\eta$. Therefore, on any interval $I$ of length $\ell$ (large), $f$ is bounded by $K:=K_1+\dots +K_n<\infty$ apart from an exceptional set $A\subseteq I$ of measure $|A|<(\lceil \ell/\alpha_1\rceil+\dots+\lceil \ell/\alpha_n\rceil)\eta<\varepsilon \ell$, if $\eta$ is chosen small enough. So $f$ is ``locally almost bounded'': for any $\varepsilon>0$ there is $K<\infty$ such that on any sufficiently large interval $I$, $|\{x\in I~:~ |f(x)|>K\}|<\varepsilon |I|$.

One would think that the bug here is with the axiom of choice, the huge number of ``ugly'', non-measurable functions, so that once a continuous function has a relatively nice---say, measurable---decomposition, then it must also have a continuous one. However, the contrary is true:
\begin{proposition}[T.{} Keleti \cite{keleti:1997}]
There exists $f\in \Ce(\RR)$ having measurable decomposition \eqref{eq:periodd} but without a continuous periodic decomposition.
\end{proposition}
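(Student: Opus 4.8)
The plan is to settle the case $n=2$ with the incommensurable periods $\alpha_1=1$ and $\alpha_2=\beta$, where $\beta\in(0,1)$ is irrational (so that $1,\beta$ are indeed incommensurable) and chosen so that the denominators $q_j$ of its continued fraction convergents $p_j/q_j$ form a Hadamard lacunary sequence; for instance $\beta=\sqrt2-1$, whose convergent denominators satisfy $q_{j+1}=2q_j+q_{j-1}$, so that $q_{j+1}/q_j\to1+\sqrt2$. First I would reduce the whole question to a cohomological equation over the irrational rotation $S\colon\TT\to\TT$, $Sx=x+\beta\pmod1$, on $\TT=\RR/\ZZ$. Assume $f\in\Ce(\RR)$ satisfies \eqref{eq:kern}, i.e.\ $\Diff_1\Diff_\beta f=0$, so that $g:=\Diff_\beta f$ is continuous and $1$-periodic; write $\gamma\in\Ce(\TT)$ for the induced function on the circle. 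For any class $\FC\in\{\Ce,\ \text{measurable}\}$ closed under subtraction, $f$ has a $(1,\beta)$-periodic decomposition with both components in $\FC$ if and only if there is a $1$-periodic $f_1\in\FC$ with $\Diff_\beta f_1=g$: given such an $f_1$ one sets $f_2:=f-f_1$, which is $\beta$-periodic since $\Diff_\beta f_2=g-g=0$, and conversely any $f=h_1+h_2$ forces $\Diff_\beta h_1=\Diff_\beta f=g$. Passing to $\TT$, the requirement ``$1$-periodic $f_1\in\FC$ with $\Diff_\beta f_1=g$'' becomes exactly the coboundary equation $\phi\circ S-\phi=\gamma$ with $\phi\in\FC(\TT)$. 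Thus it suffices to exhibit a continuous $\gamma$ that is a measurable coboundary but not a continuous one, and then to lift it to a genuine continuous $f$ on $\RR$.

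The heart of the matter is Fourier-analytic. Let $\phi(x):=\sum_{j\ge1}\tfrac1j\cos(2\pi q_jx)$. Since $\sum_j j^{-2}<\infty$ this lies in $\Ell^2(\TT)$, but because the frequencies $q_j$ are lacunary while $\sum_j j^{-1}=\infty$, Sidon's theorem on lacunary series shows that $\phi$ is not (a.e.\ equal to) a continuous, or even bounded, function. Now put $\gamma:=\phi\circ S-\phi$, a bona fide coboundary of the $\Ell^2$ function $\phi$. Its only nonzero Fourier coefficients are $\widehat\gamma(\pm q_j)=\tfrac1{2j}(\ee^{\pm2\pi\mathrm{i}q_j\beta}-1)$, whence $|\widehat\gamma(\pm q_j)|=\tfrac1j|\sin(\pi q_j\beta)|\le\tfrac\pi j\,|q_j\beta-p_j|\le\tfrac\pi{j\,q_{j+1}}$ by the standard estimate $|q_j\beta-p_j|<q_{j+1}^{-1}$. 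As $q_{j+1}$ grows geometrically, $\sum_j|\widehat\gamma(q_j)|<\infty$, so $\gamma$ has an absolutely convergent Fourier series and is therefore continuous. This is the key pairing: the small denominators $|q_j\beta-p_j|$ pull $\gamma$ down into $\Ce(\TT)$, while the lacunary profile of $\phi$ keeps the transfer function out of $\Ce(\TT)$.

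It remains to read off the two conclusions and to lift. Because $\gamma=\phi\circ S-\phi$ with $\phi\in\Ell^2$, $\gamma$ is a measurable (indeed $\Ell^2$) coboundary, so taking $f_1$ to be a $1$-periodic Borel representative of $\phi$ yields a measurable decomposition $f=f_1+f_2$; the a.e.\ identity $\Diff_\beta f_1=g$ is upgraded to hold everywhere by redefining $f_1$ on a null $S$-invariant set (where, the set being null, any redefinition is automatically Lebesgue measurable). On the other hand, if some $\psi\in\Ce(\TT)$ satisfied $\psi\circ S-\psi=\gamma$, then $\psi-\phi$ would be $S$-invariant, hence constant a.e.\ by ergodicity of the irrational rotation; thus $\psi$ would be a continuous function with the same lacunary spectrum and coefficients $\widehat\psi(q_j)=\tfrac1{2j}$, contradicting Sidon's theorem. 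Hence $\gamma$ is not a continuous coboundary, and by the reduction $f$ admits no continuous periodic decomposition.

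Finally I lift $\gamma$ to an honest $f\in\Ce(\RR)$: prescribe $f$ arbitrarily and continuously on $[0,\beta]$ subject only to $f(\beta)=f(0)+g(0)$, and propagate by $f(x+\beta):=f(x)+g(x)$ and $f(x-\beta):=f(x)-g(x-\beta)$. Continuity of $g$ makes the one-sided limits match at every junction $k\beta$, so $f$ is continuous on all of $\RR$ with $\Diff_\beta f=g$ and therefore $\Diff_1\Diff_\beta f=\Diff_1 g=0$. This $f$ then has the required measurable but no continuous decomposition. I expect the main obstacle to be the construction of $\gamma$ in the second paragraph: everything hinges on separating measurable from continuous coboundaries over a single rotation, which is achieved by matching an $\Ell^2$-but-not-continuous lacunary transfer function against the Diophantine smallness of $|q_j\beta-p_j|$, with ergodicity and Sidon's theorem together supplying the rigidity that excludes any continuous solution.
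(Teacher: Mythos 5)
Your argument is correct, and it is genuinely self-contained, which is more than the paper offers: for this proposition the paper gives no proof at all, only the citation to \cite[Thm.{} 4.8]{keleti:1996}. Your route is the classical one for separating measurable from continuous coboundaries over an irrational rotation, and every step checks out: the reduction of the $(1,\beta)$-decomposition problem to the cohomological equation $\phi\circ S-\phi=\gamma$ on $\TT$ is exactly right (both directions); the pairing of a lacunary, $\Ell^2$-but-not-bounded transfer function $\phi=\sum j^{-1}\cos(2\pi q_j x)$ against the Diophantine smallness $|q_j\beta-p_j|<q_{j+1}^{-1}$ does make $\gamma$ have an absolutely convergent Fourier series while Sidon's theorem plus ergodicity excludes any continuous (indeed any bounded measurable) solution; the repair of the a.e.{} identity on a null $S$-invariant set by re-propagating along orbits is legitimate (completeness of Lebesgue measure keeps $f_1$ measurable); and the telescoping lift of $g$ to a continuous $f$ on $\RR$ with $\Diff_\beta f=g$ is standard. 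Two remarks on how this sits relative to the surrounding material. First, your example is necessarily unbounded: by Theorem \ref{th:LRBC} any \emph{bounded} continuous solution of \eqref{eq:kern} has a continuous decomposition, and indeed the Gottschalk--Hedlund criterion (Lemma \ref{l:liftequiv}) shows the Birkhoff sums of your $\gamma$ are unbounded; so one could shortcut the non-existence half by noting that an unbounded $f$ trivially admits no continuous decomposition --- but proving unboundedness costs essentially the same Sidon/ergodicity argument, so nothing is lost. Second, your construction isolates exactly the mechanism behind Keleti's examples (which concern when a continuous function is the $\Diff_\beta$ of a periodic measurable versus a periodic continuous function), so while I cannot compare against an in-paper proof, your argument is a valid and complete substitute for the external reference.
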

For the proof see \cite[Thm.{} 4.8]{keleti:1996}.

We can also look for further immediate solutions of \eqref{eq:kern}: For example polynomials of degree $m<n$ satisfy this difference equation. So, we can ask for \emph{quasi-decompositions} with \emph{periodic functions and polynomials}
\begin{equation} \label{eq:quasidec}
f=p+f_1+\dots+f_n,\quad\text{with}~~ \Diff_{a_j}f_j=0 \quad
\text{and}~~ \deg p <n ~~\text{a polynomial}.
\end{equation}

\begin{theorem}[I.{}Z.{} Ruzsa, M.{} Szegedy (unpublished)]
There exist continuous, {unbounded}
solutions of \eqref{eq:kern} with $\lim_{x\to\infty} f(x)/x=0$.
\end{theorem}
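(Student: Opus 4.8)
The plan is to exhibit such an $f$ already for $n=2$ with the incommensurable periods $\alpha_1=1$ and $\alpha_2=\gamma$, where $\gamma$ is a Liouville number to be fixed; this suffices, since \eqref{eq:kern} only asks for the existence of one bad solution. For these periods \eqref{eq:kern} reads $\Diff_1\Diff_\gamma f=0$, i.e.\ $g:=\Diff_\gamma f$ must be $1$-periodic, so I look for a continuous $f$ for which $\Diff_\gamma f$ has period $1$, and I shall build it by superposing explicit single-frequency solutions tuned to the continued fraction of $\gamma$. Let $p_m/q_m$ be the convergents of $\gamma$ and put $\delta_m:=q_m\gamma-p_m$, so that $e^{2\pi i q_m\gamma}=e^{2\pi i\delta_m}$ with $|\delta_m|$ as small as we please. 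A direct computation on the complex lift $e^{2\pi i q_m x}A_m(x)$, with $A_m(x):=(1-e^{-2\pi i\delta_m x/\gamma})/(e^{2\pi i\delta_m}-1)$, shows that
\begin{equation*}
\sigma_m(x):=\mathrm{Re}\,\frac{e^{2\pi i q_m x}\,\bigl(1-e^{-2\pi i\delta_m x/\gamma}\bigr)}{e^{2\pi i\delta_m}-1}
\end{equation*}
is a continuous solution of $\Diff_\gamma\sigma_m=\cos(2\pi q_m\,\cdot\,)$, and satisfies the two bounds
\begin{equation*}
|\sigma_m(x)|\le\frac{|\sin(\pi\delta_m x/\gamma)|}{|\sin(\pi\delta_m)|}\le\min\Bigl(\frac{\pi|x|}{2\gamma},\ \frac{1}{2|\delta_m|}\Bigr).
\end{equation*}
Thus each $\sigma_m$ rises essentially linearly, like $\tfrac{|x|}{\gamma}\cos(2\pi q_m x)$, until $|x|\asymp\gamma/|\delta_m|$, and is then capped at height $\asymp 1/|\delta_m|$.

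Now I set $f:=\sum_{m\ge1}c_m\sigma_m$ and calibrate. Take $c_m:=2^{-2^m}$ and choose $\gamma$ (by prescribing sufficiently large partial quotients) so that $|\delta_m|=\tfrac12\,2^{-2^{m+1}}$; then the peak heights $H_m:=c_m/(2|\delta_m|)=2^{2^m}$ tend to $\infty$, while the peak locations $X_m:=\gamma/(\pi|\delta_m|)\asymp H_m^2$. Since $\sum_m c_m<\infty$ and $|c_m\sigma_m(x)|\le\frac{\pi X}{2\gamma}c_m$ on $[-X,X]$, the series converges uniformly on compacta, so $f\in\Ce(\RR)$; and, being a locally uniform limit of $1$-periodic partial sums, $g=\Diff_\gamma f=\sum_m c_m\cos(2\pi q_m\,\cdot\,)$ is $1$-periodic, whence $\Diff_1\Diff_\gamma f=0$. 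For the sublinear growth fix $x>0$ and let $m^*$ be the largest index with $X_{m^*}\le x$; splitting $f(x)$ into the capped modes $m\le m^*$ and the still-linear modes $m>m^*$ gives
\begin{equation*}
|f(x)|\le\sum_{m\le m^*}H_m+\frac{\pi x}{2\gamma}\sum_{m>m^*}c_m\le 2H_{m^*}+\frac{\pi x}{\gamma}\,c_{m^*+1}.
\end{equation*}
Since the $H_m$ grow doubly exponentially, $H_{m^*}\asymp\sqrt{X_{m^*}}\le\sqrt{x}$; and $c_{m^*+1}=\sqrt{2|\delta_{m^*+1}|}$ with $|\delta_{m^*+1}|<\gamma/(\pi x)$ (because $X_{m^*+1}>x$), so the second term is also $O(\sqrt x)$. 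Hence $f$ is continuous with $f(x)=O(\sqrt x)$, in particular $f(x)/x\to0$.

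It remains to see that $f$ is unbounded, and this is the crux. Telescoping the defining relation gives, for every $N\in\NN$,
\begin{equation*}
f(x+N\gamma)-f(x)=\sum_{j=0}^{N-1}g(x+j\gamma)=\sum_{m}c_m\,\mathrm{Re}\Bigl(e^{2\pi i q_m x}\,\frac{e^{2\pi i\delta_m N}-1}{e^{2\pi i\delta_m}-1}\Bigr).
\end{equation*}
Choosing $N=N_m:=\lfloor 1/(2|\delta_m|)\rfloor$ makes $e^{2\pi i\delta_m N_m}\approx-1$, so at a suitable base point $x=x_m$ (e.g.\ $x_m=1/(4q_m)$, which fixes the phase) the $m$-th summand has magnitude $\asymp c_m/|\delta_m|\asymp H_m\to\infty$; the task is to show that the remaining summands cannot cancel it. For $k<m$ the factor is bounded by $2/|e^{2\pi i\delta_k}-1|\asymp 1/|\delta_k|$, so these terms total at most $\sum_{k<m}H_k\le 2H_{m-1}$, which is $\ll H_m$ by the doubly-exponential growth of the $H_k$; for $k>m$ one has $|\delta_k N_m|\ll1$, so the factor is $\approx N_m$ and these terms total at most $N_m\sum_{k>m}c_k\asymp N_m c_{m+1}\asymp c_{m+1}/|\delta_m|=O(1)$, precisely because $c_{m+1}\asymp|\delta_m|$ by our calibration. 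Consequently $|f(x_m+N_m\gamma)-f(x_m)|\ge H_m-2H_{m-1}-O(1)\to\infty$, so $f$ is unbounded. The delicate point—and the main obstacle—is exactly this simultaneous domination: the superposition must be arranged so that at the scale where mode $m$ makes its excursion of height $H_m$, the already-capped lower modes contribute $\ll H_m$ while the still-linearly-growing higher modes contribute only $O(1)$, and it is this double constraint that pins down the doubly-exponential matching of $c_m$ and $\delta_m$ used above.
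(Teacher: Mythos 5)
Your construction is correct, and the first thing to say is that the paper contains no proof of this theorem at all: it is attributed to Ruzsa and Szegedy as unpublished, and the reader is merely referred to the discussion in \cite{laczkovich/revesz:1989} (pp.~338--339). So you have supplied a genuine, self-contained argument where the survey offers only a citation. Your skeleton is the classical Liouville-frequency construction for circle rotations: a continuous, $1$-periodic, mean-zero $g=\sum_m c_m\cos(2\pi q_m\cdot)$ whose Birkhoff sums along the rotation by $\gamma$ are unbounded, together with an explicit primitive $f$ with $\Diff_\gamma f=g$ (hence $\Diff_1\Diff_\gamma f=0$) of growth $O(\sqrt{|x|})$ --- note this is precisely the quantitative counterpart of the Gottschalk--Hedlund criterion appearing as Lemma~\ref{l:liftequiv}. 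I verified the computations: $\Diff_\gamma\sigma_m=\cos(2\pi q_m\cdot)$ is exact since $\ee^{2\pi i q_m\gamma}=\ee^{2\pi i\delta_m}$; the two bounds on $|\sigma_m|$ follow from $|\sin(\pi t)|\geq 2|t|$ on $[-1/2,1/2]$; the split at $m^*$ gives $f(x)=O(\sqrt{x})$; and in the unboundedness estimate the main term at $x_m=1/(4q_m)$, $N_m=\lfloor 1/(2|\delta_m|)\rfloor$ is indeed $\asymp H_m$ (the factor $(\ee^{2\pi i\delta_mN_m}-1)/(\ee^{2\pi i\delta_m}-1)$ is nearly purely imaginary of modulus $\asymp 1/|\delta_m|$, and the prefactor $\ee^{2\pi iq_mx_m}=i$ rotates it onto the real axis), while the lower modes contribute $\leq 2H_{m-1}=2\sqrt{H_m}$ and the higher modes $O(1)$ because $N_mc_{m+1}\leq 1$. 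The one wrinkle is the clause ``choose $\gamma$ so that $|\delta_m|=\tfrac12 2^{-2^{m+1}}$'': the numbers $\delta_m=q_m\gamma-p_m$ cannot be prescribed exactly through the partial quotients, only up to a factor of about $2$ (one has $1/(q_{m+1}+q_m)<|\delta_m|<1/q_{m+1}$). This is harmless since all your estimates are stable under constant factors, and it can be avoided entirely by taking $\gamma=\sum_k2^{-n_k}$ with $n_{k+1}\gg n_k$ and $q_m=2^{n_m}$, which makes $\delta_m$ explicitly computable. With that cosmetic repair the proof stands, and it actually yields slightly more than claimed, namely $f(x)=O(\sqrt{|x|})$ rather than just $f(x)=o(x)$.
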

As a consequence $\Ce(\RR)$ does not have the quasi-decomposition property either.
 For a discussion see \cite[pp.{} 338--339]{laczkovich/revesz:1989}.  It can be precisely described which functions in $\Ce(\RR)$ have continuous periodic quasi-decompositions \eqref{eq:quasidec}.

\begin{theorem}[M.{} Laczkovich, Sz.{} R\'ev\'esz \cite{laczkovich/revesz:1989}]\label{th:LRC}
For a function $f\in \Ce(\RR)$ the existence of a
quasi-de\-com\-po\-si\-tion \eqref{eq:quasidec} is equivalent
to \eqref{eq:kern} together with {the Whitney condition}
\begin{equation*}
\delta_n(f):=\sup \Bigl\{ \sum_{j=0}^n (-1)^j \binom{n}{j}
f(x+jh)~:~~x,h\in\RR \Bigr\} <\infty.
\end{equation*}
\end{theorem}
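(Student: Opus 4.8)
The equivalence has an easy \emph{necessity} half and a substantial \emph{sufficiency} half, and I would treat them separately. For necessity, suppose $f$ admits a quasi-decomposition \eqref{eq:quasidec}. Applying $\Diff_{\alpha_1}\cdots\Diff_{\alpha_n}$ annihilates each $\alpha_j$-periodic summand $f_j$ (its own factor $\Diff_{\alpha_j}$ already kills it, and the operators commute) and annihilates $p$ too, since a composition of $n$ difference operators vanishes on every polynomial of degree $<n$; hence \eqref{eq:kern} holds. For the Whitney condition, expand $\Diff_h^n f=\Diff_h^n p+\sum_{j=1}^n\Diff_h^n f_j$: the pure $n$-th difference $\Diff_h^n$ kills $p$, while each continuous $\alpha_j$-periodic $f_j$ is bounded, so $|\Diff_h^n f_j|\le 2^n\|f_j\|_\infty$; summing yields a bound free of $x$ and $h$, i.e. $\delta_n(f)<\infty$.

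For sufficiency I would first reduce to bounded functions. The Whitney condition is exactly the Hyers--Ulam--Fr\'echet stability hypothesis for the Fr\'echet functional equation $\Diff_h^n f\equiv 0$, whose continuous solutions are the polynomials of degree $<n$. I would invoke (or reprove by the standard Hyers-type iteration) the accompanying structure theorem: if $f\in\Ce(\RR)$ and $\delta_n(f)<\infty$, then $g:=f-p$ is \emph{bounded} and continuous for a suitable polynomial $p$ with $\deg p<n$. As above, $\Diff_{\alpha_1}\cdots\Diff_{\alpha_n}$ kills $p$, so $g$ still satisfies \eqref{eq:kern}. This reduces everything to the claim that \emph{every bounded continuous solution of \eqref{eq:kern} has a continuous periodic decomposition} \eqref{eq:periodd}; granting it, $f=p+g_1+\cdots+g_n$ is the required quasi-decomposition.

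The heart of the matter is this bounded decomposition, which I would prove by induction on $n$ with the cohomological (coboundary) equation, solved via Gottschalk--Hedlund, as the engine. The case $n=2$ displays the mechanism: if $g$ is bounded with $\Diff_{\alpha_1}\Diff_{\alpha_2}g=0$, then $\psi:=\Diff_{\alpha_2}g$ is bounded, continuous and $\alpha_1$-periodic, hence descends to the circle $\RR/\alpha_1\ZZ$ on which $\Diff_{\alpha_2}$ acts as rotation by $\alpha_2$. The Birkhoff sums $\sum_{i=0}^{k-1}\psi(x+i\alpha_2)=g(x+k\alpha_2)-g(x)$ are bounded by $2\|g\|_\infty$ for all $k\in\ZZ$ \emph{precisely because} $g$ is bounded, so the Gottschalk--Hedlund theorem (the rotation being uniquely ergodic when $\alpha_1,\alpha_2$ are incommensurable) furnishes a continuous $\alpha_1$-periodic $g_1$ with $\Diff_{\alpha_2}g_1=\psi$; then $g_2:=g-g_1$ is $\alpha_2$-periodic. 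For general $n$ one sets $\psi:=\Diff_{\alpha_n}g$, decomposes it by the inductive hypothesis, lifts the data to the torus $\RR/\alpha_1\ZZ\times\cdots\times\RR/\alpha_{n-1}\ZZ$ carrying the diagonal rotation by $\alpha_n$, solves the coboundary equation there (the Birkhoff sums again bounded by boundedness of $g$), and averages out the auxiliary directions to extract continuous $\alpha_j$-periodic components $g_j$, leaving $g_n:=g-\sum_{j<n}g_j$ periodic.

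The main obstacle is concentrated entirely in this last step. The point of the Gottschalk--Hedlund route is that it converts the \emph{a priori} bound on $g$ directly into continuity and boundedness of the transfer functions, thereby bypassing the small-divisor difficulty that would obstruct any attempt to solve the coboundary equation by Fourier series from a mere mean-zero hypothesis. The genuinely delicate case is the \emph{commensurable} (resonant) one: when the periods are rationally related the diagonal rotation on the torus fails to be minimal, its orbit closure is a proper subtorus, and one must separate the resonant frequencies, treat the finitely many periodic orbits directly, and verify that the inductive decomposition of $\psi$ can be arranged so that each periodic component still carries bounded Birkhoff sums. This case analysis, together with the bookkeeping that keeps every summand bounded and continuous throughout the induction, is where the real effort is spent.
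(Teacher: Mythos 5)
Your necessity argument and your reduction of sufficiency are exactly the paper's proof: quasi-decomposability trivially forces \eqref{eq:kern} and $\delta_n(f)<\infty$; conversely Whitney's theorem produces a polynomial $p$ with $\deg p<n$ and $\|f-p\|_\infty<\infty$, so that $g:=f-p$ is a bounded continuous solution of \eqref{eq:kern}, and everything reduces to the decomposition property of $\BC(\RR)$. In the paper that last claim is a separate theorem (Theorem \ref{th:LRBC}, proved in \S\ref{sec:BCR}), and the proof of Theorem \ref{th:LRC} simply invokes it; up to this point your proposal is correct and identical in structure.

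Where you go beyond the paper --- proving the $\BC(\RR)$ decomposition in-line --- there are two genuine gaps. First, the diagonal rotation by $\alpha_n$ on $\RR/\alpha_1\ZZ\times\cdots\times\RR/\alpha_{n-1}\ZZ$ is minimal only when $1,\alpha_n/\alpha_1,\dots,\alpha_n/\alpha_{n-1}$ are linearly independent over $\QQ$, and pairwise incommensurability does not give this (take $\alpha_1=1$, $\alpha_2=\sqrt2$, $\alpha_3=2-\sqrt2$, so that $\alpha_3/\alpha_1+\alpha_3/\alpha_2=1$); so Gottschalk--Hedlund on the torus is unavailable in general and ``averaging out the auxiliary directions'' is not justified --- this failure of minimality is not confined to commensurable periods. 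Second, and more fundamentally, the telescoping identity bounds the Birkhoff sums of $\psi=\Diff_{\alpha_n}g$ as a whole by $2\|g\|_\infty$, but the lift-up step (Lemma \ref{l:liftequiv}) must be applied to each component $\psi_j$ of the inductive decomposition separately, and boundedness of the individual sums $\sum_{i=0}^{k-1}\psi_j(x+i\alpha_n)$ does not follow from boundedness of $g$; arranging the induction so that it does is precisely the tricky core of the Laczkovich--R\'ev\'esz argument, and your sketch assumes it rather than proves it. The paper's route avoids both issues: commensurable periods are grouped and the order of the difference equation is lowered via Lemma \ref{l:commensurable}, after which the $\AP(\RR)$ result applies, while for pairwise incommensurable periods the coboundary equation is solved one period at a time on a single circle $\RR/\alpha_j\ZZ$, never on a higher-dimensional torus.
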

\begin{proof} Obviously, \eqref{eq:quasidec} implies both \eqref{eq:periodd} and $\delta_n(f)<\infty$. Conversely, a result of H.{} Whitney \cite{Whitney} says that $\delta_n(f)<\infty$ entails that $f$ can be approximated by a polynomial $p$ of degree $\deg p <n$ within a bounded distance: $\|f-p\|_\infty<\infty$. Thus, for $g:=f-p\in \BC(\RR)$ we have $\Diff_{\alpha_1}\dots\Diff_{\alpha_n} g=0$ and it remains to establish the decomposition property of $\BC(\RR)$, postponed to \S\ref{sec:BCR} below.\qed
\end{proof}

\section{Continuous periodic decompositions}\label{sec:BCR}

In view of the foregoing discussion it is natural to pose the boundedness condition on the occurring functions and look at subclasses $\FC$ of the space $\Cb(\RR)$ of bounded continuous functions  on $\RR$. Note that if $f$ has a continuous periodic decomposition it is \emph{uniformly almost periodic} (alternatively, Bohr or Bochner  almost periodic), i.e., the set
\begin{equation*}
\bigl\{f(\cdot+t):t\in \RR\bigr\}\subseteq \Cb(\RR)
\end{equation*}
of its translates  is relatively compact with respect to the supremum norm
%%%\begin{equation*}
$\|f\|_\infty:=\sup_{x\in \RR}|f(x)|$.
%%%\end{equation*}
Denote by $\AP(\RR)$ the set of all such functions, which becomes a Banach space, actually a Banach algebra, if endowed with the supremum norm and pointwise operations, see \cite[Ch.{} I.]{Besi}.
Evidently, a solution of \eqref{eq:kern} in  $\FC\subseteq\Cb(\RR)$ must be contained by $\AP(\RR)$ if $\FC$ has the decomposition property.
\begin{proposition}\label{p:AP}
The space $\AP(\RR)$ has the decomposition property.
\end{proposition}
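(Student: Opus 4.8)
The plan is to run the argument through Fourier--Bohr analysis on $\AP(\RR)$ and to realise the periodic components as images of explicit bounded projections. First I would dispose of the degenerate case: if some $\alpha_j\eqegy0$, then $\Diff_{\alpha_j}$ is the zero operator and the trivial decomposition $f_j:=f$, $f_i:=0$ ($i\neq j$) works, so I may assume all $\alpha_j\neq0$. Complexifying (and observing at the end that every operator used preserves real-valuedness, so the real components will be real), I recall that each $f\in\AP(\RR)$ has a mean $M(f)=\lim_{T\to\infty}\frac1{2T}\int_{-T}^{T}f$ and Fourier--Bohr coefficients $c_\lambda(f):=M\bigl(f(\cdot)\ee^{-\mathrm{i}\lambda\cdot}\bigr)$ for $\lambda\in\RR$, together with the uniqueness theorem: if $c_\lambda(f)=0$ for every $\lambda$, then $f=0$. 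Since $\Diff_{\alpha}$ multiplies the $\lambda$-coefficient by $(\ee^{\mathrm{i}\lambda\alpha}-1)$, the hypothesis \eqref{eq:kern} yields $\prod_{j=1}^{n}(\ee^{\mathrm{i}\lambda\alpha_j}-1)\,c_\lambda(f)=0$ for all $\lambda$, whence the spectrum $\{\lambda:c_\lambda(f)\neq0\}$ is contained in $\bigcup_{j=1}^{n}\tfrac{2\pi}{\alpha_j}\ZZ$.

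Next I would build, for each $j$, a bounded projection $P_j$ of $\AP(\RR)$ onto its subspace of $\alpha_j$-periodic functions via the ergodic average $P_j:=\lim_{N\to\infty}\frac1N\sum_{k=0}^{N-1}T_{\alpha_j}^{k}$ (strong limit), where $T_t$ denotes translation by $t$. This limit exists by the mean ergodic theorem: the orbit $\{T_{\alpha_j}^{k}f\}_k$ lies inside the set $\{T_tf:t\in\RR\}$, which is relatively compact by the very definition of $\AP(\RR)$, so the Ces\`aro means converge in norm to the projection onto the fixed space of $T_{\alpha_j}$, which is exactly the space of $\alpha_j$-periodic elements. Because all translations commute, each $P_j$ commutes with every $T_t$ and with the other $P_i$, and $\|P_j\|\le1$. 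Evaluating on the coefficient functionals gives $c_\lambda(P_jf)=c_\lambda(f)$ when $\lambda\in\frac{2\pi}{\alpha_j}\ZZ$ and $c_\lambda(P_jf)=0$ otherwise; equivalently, $Q_j:=\Id-P_j$ erases precisely the part of the spectrum lying in $\frac{2\pi}{\alpha_j}\ZZ$.

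Finally I would expand the operator identity $\Id=\prod_{j=1}^{n}(P_j+Q_j)$. Every summand indexed by a nonempty $S\subseteq\{1,\dots,n\}$ contains a factor $P_j$ with $j\in S$, hence maps $\AP(\RR)$ into the range of $P_{\min S}$, i.e.\ into $\alpha_{\min S}$-periodic functions; collecting these summands according to $\min S$ produces $f_1,\dots,f_n$ with $f_j$ being $\alpha_j$-periodic and $f=Q_1\cdots Q_nf+\sum_{j=1}^{n}f_j$. It then remains to see that the leftover $Q_1\cdots Q_nf$ vanishes, and here the spectral preparation pays off: its spectrum is contained in $\{\lambda:c_\lambda(f)\neq0\}\setminus\bigcup_{j}\frac{2\pi}{\alpha_j}\ZZ$, which is empty by the first paragraph, so the uniqueness theorem forces $Q_1\cdots Q_nf=0$ and $f=\sum_j f_j$ is the desired decomposition. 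The main obstacle I anticipate is not the final algebra but the justification that the spectral splitting can be carried out \emph{inside} $\AP(\RR)$: a Fourier--Bohr series need not converge in supremum norm, so one cannot merely group its terms, and the whole argument hinges on producing the honest bounded projections $P_j$ through the mean ergodic theorem (equivalently, through averaging over the closed monothetic subgroups of the Bohr compactification).
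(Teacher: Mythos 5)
Your argument is correct and is essentially the paper's own proof: the mean ergodic projections $P_j$ obtained by Ces\`aro averaging over the relatively compact orbit, together with the telescoping expansion $f=\sum_{m}P_mQ_1\cdots Q_{m-1}f+Q_1\cdots Q_nf$, are exactly Proposition \ref{p:meanerg} applied to $\AP(\RR)$ (as in the Example following it), and your Fourier--Bohr computation reproduces the paper's partial proof given in \S\ref{sec:BCR}. The only real difference is how the remainder $Q_1\cdots Q_nf$ is killed: you use the uniqueness theorem for Fourier--Bohr coefficients (which makes your spectral first paragraph necessary), whereas the paper argues purely algebraically that $Q_1\cdots Q_nf$ lies simultaneously in the range and the kernel of the projection $(\Id-P_1)\cdots(\Id-P_n)$, since each $\Id-P_j$ is a strong limit of averages of $\Id-T_{\alpha_j}^k$ and hence annihilates $\ker(T_{\alpha_1}-\Id)\cdots(T_{\alpha_n}-\Id)$ --- an observation that would let you delete the Fourier analysis altogether.
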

At this point, we give a proof only for the case of incommensurable periods to illustrate the use of Fourier analytic techniques.  The complete proof will be given in \S\ref{sec:mean} as a special case of a more general result.
\begin{proof}
Suppose $\alpha_1,\dots,\alpha_n$ are incommensurable  and let $f\in \AP(\RR)$.
Any $f\in \AP(\RR)$ has a mean value
\begin{equation*}
Mf:=\lim_{t\to \infty} \frac1 t\int_0^t f(s)\dd s
\end{equation*}
by \cite[Sec.{} I.3]{Besi}, and $M$ is a continuous linear functional on $\AP(\RR)$.  Moreover, the Fourier coefficients of $f$ can be calculated
as $(0\ne )c_k:=a(\lambda_k):=M(f(s)\ee^{-is\lambda_k})$ for some uniquely determined sequence $(\lambda_k)$, and $f$ has the Fourier series $f\sim\sum_{k=1}^\infty c_k \ee^{ix\lambda_k}$.
Now for any $\alpha\in \RR$
\begin{align*}
\Diff_\alpha  \Bigl(M(f\ee^{-is\lambda_k})\ee^{ix\lambda_k}\Bigr)=M(\Diff_\alpha f(x-t) \ee^{-it\lambda_k})=M(\Diff_\alpha f(s) \ee^{-is\lambda_k})\ee^{ix\lambda_k}.
\end{align*}
So that the difference equation \eqref{eq:kern} implies $\Diff_{\alpha_1}\dots \Diff_{\alpha_n}c_k\ee^{ix\lambda_k}=0$. Since $c_k\neq 0$, this is only possible if $\lambda_k={2\pi \ell}/{\alpha_j}$ for some $\ell\in \ZZ$ and $j\in \{1,\dots,n\}$. Since $\alpha_1,\dots,\alpha_n$ are incommensurable there can be at most one such $j$. On the other hand,  by Section I.8.6$^\circ$ in \cite{Besi} $\frac1N\sum_{j=1}^N f(s+k\alpha_j)$ converges uniformly (in $s$) to a bounded continuous function $f_j$ which is $\alpha_j$-periodic and whose Fourier coefficients are precisely those Fourier coefficients $a(\lambda)$ of $f$ for which $\lambda\in (2\pi/\alpha_j)\ZZ$. We see therefore that $f_1+\cdots+f_n$ and $f$ have the same Fourier coefficients, hence they coincide by Theorem I.4.7$^\circ$ in \cite{Besi}.
\qed\end{proof}
\noindent That is to say if we \emph{a priori} know that $f$ is uniformly almost periodic, then the difference equation \eqref{eq:kern} implies the periodic decomposition \eqref{eq:periodd}.

The next step is to deduce this almost periodicity. Let $\mu\in\MMc(\RR)$, i.e., a compactly supported finite (signed) Borel measure on $\RR$, and let $f\in \Ce(\RR)$. Then
\begin{equation*}
f*\mu(x):=\int_{\RR}f(x-t)\dd\mu(t)
\end{equation*}
defines a continuous function, the \emph{convolution} of $f$ and $\mu$. The convolution of two measures $\mu,\nu\in\MMc(\RR)$ is defined by
$f*(\mu*\nu):=(f*\mu)*\nu$ (for $f\in \Ce(\RR)$): As a continuous linear functional on the locally convex space $\Ce(\RR)$, $\mu*\nu$ is a compactly supported measure, i.e., $\mu*\nu\in\MMc(\RR)$. It is also easy to see that convolution is commutative and associative in $\MMc(\RR)$.

Now denote $\mu_\alpha:=\delta_{-\alpha}-\delta_0$, where $\delta_\beta$ is the Dirac measure at $\beta\in\RR$. Then $f*\mu_\alpha=f*(\delta_{-\alpha}-\delta_0)=\Delta_\alpha f$.
With this equation \eqref{eq:kern} takes the form
\begin{equation*}
f*(\mu_{\alpha_1}*\cdots *\mu_{\alpha_n})= f*\bigl((\delta_{-\alpha_1}-\delta_0)*\cdots*(\delta_{-\alpha_n}-\delta_0)\bigr)=0.
\end{equation*}
\begin{definition}[L.{} Schwartz \cite{LSch}]\label{d:mp}
A function $f\in \Ce(\RR)$ is
\emph{mean periodic} if there exists a compactly
supported Borel measure $\mu$ on $\RR$ with $f*\mu=0$.
%%% i.e. $\int_{-\infty}^{\infty} f(x-t)\dd\mu(t)=0$.
\end{definition}
Let us recall from  \cite[p.{} 44]{Kah} the following.
\begin{proposition}[J.-P. Kahane]\label{p:JPK}
A bounded uniformly continuous mean periodic function is uniformly almost periodic.
\end{proposition}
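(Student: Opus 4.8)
The goal is to verify the defining property of $\AP(\RR)$ for $f$, namely that the family of translates $\{f(\cdot+t):t\in\RR\}$ is relatively compact for $\|\cdot\|_\infty$. The plan is to compare two topologies on the translation hull of $f$. First I would form $H:=\overline{\{f(\cdot+t):t\in\RR\}}$, the closure taken in the topology of uniform convergence on compact sets (the compact--open topology). Because $f$ is bounded by $\|f\|_\infty$ and, being uniformly continuous, its translates are equicontinuous with a common modulus, the Arzel\`a--Ascoli theorem shows that $H$ is compact in the compact--open topology. The first key observation is that every $g\in H$ inherits the hypotheses: it is bounded by $\|f\|_\infty$, equicontinuous with the same modulus as $f$, and---crucially---mean periodic for the same measure, $g*\mu=0$. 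Indeed, $h\mapsto h*\mu$ is translation--invariant and continuous for the compact--open topology when $\operatorname{supp}\mu$ is compact, so the relation $f*\mu=0$ is preserved under translation and under passage to compact--open limits.

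With this in hand, the whole statement reduces to a single assertion: on the compact set $H$ the compact--open topology coincides with the norm topology. Granting this, $H$ is $\|\cdot\|_\infty$-compact, and since it contains all translates of $f$, we obtain exactly the relative compactness that defines $\AP(\RR)$. One inclusion of topologies is automatic (norm convergence implies compact--open convergence), so the content is to upgrade local uniform convergence to global uniform convergence within $H$. I would argue by contradiction: if some $g_n\to g$ in $H$ compact--openly while $\|g_n-g\|_\infty\ge\varepsilon$, pick $x_n$ with $|g_n(x_n)-g(x_n)|\ge\varepsilon/2$, recenter by forming $(g_n-g)(\cdot+x_n)$, and extract (again by Arzel\`a--Ascoli) a compact--open limit $h$ with $|h(0)|\ge\varepsilon/2$, hence $h\neq0$, while $h*\mu=0$.

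The hard part---and the only place where mean periodicity does real work---is to show that this $h$ must vanish, contradicting $h\ne 0$. Here the point is that the convolution equation $g*\mu=0$ is a delay/advance equation: since $\mu$ is compactly supported, it lets one express the values of any $g\in H$ on a far interval through its values on a fixed reference interval, so that smallness on large intervals propagates, uniformly over the equibounded, equicontinuous family $H$, to smallness everywhere; the locally uniform decay $g_n-g\to 0$ is then incompatible with the nonvanishing limit $h$. Quantitatively this rests on the structure of $\widehat\mu$: by the Paley--Wiener theorem $\widehat\mu$ is entire of exponential type, so its real zero set $Z$ is discrete, and $\widehat{f*\mu}=\widehat f\,\widehat\mu=0$ forces $\operatorname{supp}\widehat f\subseteq Z$. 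For the atomic measures $\mu=\mu_{\alpha_1}*\cdots*\mu_{\alpha_n}$ relevant to the decomposition problem this is a genuine finite linear recurrence and the propagation can be made completely explicit. I expect this propagation estimate, made uniform over $H$, to be the main obstacle; once it is in place the contradiction closes and $f\in\AP(\RR)$. An alternative, purely Fourier-analytic route bypasses the hull: since $\operatorname{supp}\widehat f$ lies in the discrete set $Z$ and $f$ is bounded, $\widehat f$ is a sum of point masses $\sum_k a_k\delta_{\lambda_k}$ with no derivative terms (these would force polynomial growth), whence $f$ has a Bohr--Fourier series $\sum_k a_k\ee^{i\lambda_k x}$; reconstructing $f$ from it as in the proof of Proposition~\ref{p:AP} then exhibits $f$ as uniformly almost periodic, the delicate point being the justification that this series indeed represents the bounded function $f$.
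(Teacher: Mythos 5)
The paper offers no proof of this proposition at all: it is simply quoted from Kahane \cite[p.{} 44]{Kah}, so there is no in-paper argument to compare with and your proposal has to stand on its own.

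Your preparatory reductions are sound: the hull $H$ is compact in the compact--open topology by Arzel\`a--Ascoli, every $g\in H$ again satisfies $g*\mu=0$ because convolution against a compactly supported measure is compact--open continuous, and the proposition is indeed equivalent to the coincidence of the compact--open and uniform topologies on $H$; the recentering step is likewise a legitimate reduction. The genuine gap is exactly the step you flag as ``the main obstacle,'' and it is not a technical loose end but the whole theorem. The limit $h$ you produce is merely \emph{some} nonzero bounded uniformly continuous solution of $h*\mu=0$, and such functions exist in abundance (any $\ee^{i\lambda x}$ with $\widehat{\mu}(\lambda)=0$), so its existence yields no contradiction by itself. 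A contradiction could only come from a propagation-of-smallness estimate, uniform over $H$, carrying the smallness of $g_n-g$ on $[-R,R]$ out to the points $x_n$; but such a uniform estimate is precisely the assertion that compact--open convergence implies uniform convergence on the solution set, i.e.\ it is the statement being proved, so the argument is circular at the decisive point. The heuristic you offer for it also fails: for an absolutely continuous $\mu$ the relation $g*\mu=0$ is an integral equation, not a forward recurrence, and even for the atomic $\mu_{\alpha_1}*\cdots*\mu_{\alpha_n}$ the recurrence expresses $g(x+\alpha_1+\cdots+\alpha_n)$ through $2^n-1$ earlier values, so each step amplifies an error bound by a factor greater than $1$ and smallness on $[-R,R]$ gives no control at distance $x_n\gg R$ without a further idea (the paper itself stresses that no direct argument of this sort is known even in the atomic case, which is why the proof of Theorem~\ref{th:LRBC} is delicate). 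Your alternative Fourier route contains the same gap in different clothing: once one knows $\mathrm{supp}\,\widehat{f}$ lies in the discrete real zero set of $\widehat{\mu}$, the passage from ``bounded, uniformly continuous, with countable spectrum'' to ``uniformly almost periodic'' is precisely Loomis's theorem, a result of the same depth as the proposition itself; asserting that $f$ ``has a Bohr--Fourier series'' and is therefore in $\AP(\RR)$ begs the question, since a formal series with frequencies in a countable set does not by itself make a bounded function a uniform limit of trigonometric polynomials.
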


An immediate consequence of this and of Proposition \ref{p:AP} is the following.
\begin{proposition}[Z.{} Gajda \cite{gajda:1992}]\label{p:BUC}
%%%%%%%%%The Banach space
$\BUC(\RR)$ has the decomposition property.
\end{proposition}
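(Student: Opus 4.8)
The plan is to read the statement as an immediate consequence of the two preceding propositions, with the difference equation entering only to certify mean periodicity. First I would take $f\in\BUC(\RR)$ satisfying \eqref{eq:kern} and pass to the convolution formulation prepared just above Definition \ref{d:mp}: setting $\mu:=\mu_{\alpha_1}*\cdots*\mu_{\alpha_n}$ with $\mu_{\alpha_j}=\delta_{-\alpha_j}-\delta_0$, the hypothesis \eqref{eq:kern} becomes $f*\mu=0$. Since each $\mu_{\alpha_j}$ is compactly supported and $\MMc(\RR)$ is closed under convolution, $\mu$ is again a compactly supported finite signed Borel measure. Discarding the degenerate case where some $\alpha_j=0$ (then \eqref{eq:kern} is vacuous and the decomposition is trivial), we have $\mu\neq0$, so $f$ is mean periodic in the sense of Definition \ref{d:mp}.

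Next I would invoke Kahane's Proposition \ref{p:JPK}: $f$ is bounded, uniformly continuous, and mean periodic, hence uniformly almost periodic, that is, $f\in\AP(\RR)$. The hypothesis \eqref{eq:kern} of course persists, and $f$ now lies in a class where the decomposition property is already in hand. Applying Proposition \ref{p:AP} therefore produces $f=f_1+\cdots+f_n$ with each $f_j\in\AP(\RR)$ and $\Diff_{\alpha_j}f_j=0$, which is exactly the periodic decomposition \eqref{eq:periodd}.

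It remains only to verify that these summands lie in the prescribed class $\BUC(\RR)$, and this is automatic: every uniformly almost periodic function is bounded and uniformly continuous, so $\AP(\RR)\subseteq\BUC(\RR)$, whence $f_j\in\BUC(\RR)$ for each $j$. No separate regularity estimate for the periodic parts is required, since they inherit boundedness and uniform continuity from membership in $\AP(\RR)$.

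There is no genuine obstacle within the deduction itself---the mathematical substance has been front-loaded into the cited results. The single step carrying real weight is Kahane's Proposition \ref{p:JPK}, which bridges the purely algebraic mean periodicity coming out of the difference equation and the analytic relative compactness of translates defining $\AP(\RR)$; everything else is bookkeeping. In the write-up I would stress that the compact support of $\mu$ is precisely what makes the notion of mean periodicity applicable here, and that it is the passage into $\AP(\RR)$ that both upgrades the regularity of $f$ and secures the class membership of the periodic components.
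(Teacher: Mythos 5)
Your proof is correct and follows exactly the route the paper intends: the paper states Proposition \ref{p:BUC} as ``an immediate consequence'' of Kahane's Proposition \ref{p:JPK} and Proposition \ref{p:AP}, and you have simply filled in the bookkeeping (the passage to $f*\mu=0$ with $\mu=\mu_{\alpha_1}*\cdots*\mu_{\alpha_n}\in\MMc(\RR)$, the exclusion of the degenerate case $\alpha_j=0$, and the inclusion $\AP(\RR)\subseteq\BUC(\RR)$) accurately. Nothing further is needed.
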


Gajda proved this results  with a different argument (using Banach limits) that can be easily extended to the case of translations on locally compact Abelian groups (see Corollary \ref{cor:agrpdecomp} below).

However, the result of Gajda for $\BUC(\RR)$ falls short of the complete truth, in the extent that it does not tell that  a continuous function satisfying \eqref{eq:kern} is necessarily uniformly continuous, a fact that would imply even the decomposition property of the whole $\Cb(\RR)$ itself.

No direct proof of the implication ``$f\in \BC(\RR)$ \& \eqref{eq:kern}
$\Rightarrow$ $f\in \BUC(\RR)$'' is known, so the decomposition property of $\BC(\RR)$ lies deeper. In fact, to prove that a bounded continuous solution of \eqref{eq:kern} is uniformly continuous, we have no other known ways than this periodic decomposition result on $\Cb(\RR)$ itself.

Before proceeding let us formulate  the following more general question than Problem \ref{p:Ruzsa}.
\begin{problem}\label{p:measuredec} Let $\mu, \nu$ %%%(or $\mu_1,\dots,\mu_n$)
be given Borel measures of compact support on $\RR$. Clearly, if
\begin{equation}\label{eq:measuredec}
f=g+h \quad \text{with}\quad g,h\in \Ce(\RR) \quad \text{such that} \quad g*\mu=0, ~ h*\nu=0,
\end{equation}
then $f*(\mu*\nu)=0$. Find conditions, under which we have the
converse implication: If $f\in \Ce(\RR)$, and $f*(\mu*\nu)=0$,
then \eqref{eq:measuredec} holds. Or find conditions on $\mu$ ensuring that a
solution $f\in \BC(\RR)$ of $f*\mu=0$ is almost periodic.
\end{problem}
In this formulation we use no assumption on boundedness or
uniform continuity. Clearly, then additional assumptions are
needed. E.g.{} additional functional equations must also be
satisfied? Spectra must be simple? Spectra of $\mu$ and $\nu$
should be distinct? Several variations may be considered.
\begin{remark}
In the problem above $f$ is by default mean periodic. However, convergence of mean periodic Fourier
expansions was shown only in a complicated, complex sense. Perhaps, recent developments in the Fourier synthesis and representation of mean periodic functions  can be used, see Sz\'ekelyhidi \cite{Sz}. Then again, boundedness and uniform continuity could be of use by means of Proposition \ref{p:JPK} of Kahane.
\end{remark}

M.{} Wierdl \cite{wierdl:1984} showed that $\BC(\RR)$ has the  $2$-decomposition property. Subsequently, Laczkovich and  R\'{e}v\'{e}sz proved this for general $n$ as the main result of \cite{laczkovich/revesz:1989}, which was the first internationally published paper in this topic (but see also the preceding paper \cite{ck30}).

Although many generalizations and interpretations have since been described and various tools could be invoked depending on the setup, oddly enough this first non-trivial result could be covered by neither extensions. To date, we have no other proof than the essentially elementary yet tricky original argument, which we will describe also here below.

\begin{theorem}[M.{} Laczkovich, Sz.{} R\'ev\'esz \cite{laczkovich/revesz:1989}]\label{th:LRBC}
The Banach space $\Cb(\RR)$ has the decomposition property.
\end{theorem}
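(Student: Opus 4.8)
The plan is to argue by induction on the number $n$ of prescribed periods. The case $n=1$ is trivial, since $\Diff_{\alpha_1}f=0$ means precisely that $f$ is $\alpha_1$-periodic. After discarding zero periods and replacing any $\alpha_j$ by $|\alpha_j|$ (which changes neither \eqref{eq:kern} nor the class of $\alpha_j$-periodic functions), I may assume $\alpha_1,\dots,\alpha_n>0$. For the inductive step let $f\in\Cb(\RR)$ satisfy \eqref{eq:kern} and set $g:=\Diff_{\alpha_n}f\in\Cb(\RR)$. Then $\Diff_{\alpha_1}\cdots\Diff_{\alpha_{n-1}}g=0$, so by the $(n-1)$-decomposition property (the induction hypothesis) $g$ admits a decomposition $g=g_1+\cdots+g_{n-1}$ with each $g_j\in\Cb(\RR)$ being $\alpha_j$-periodic. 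If for each $j<n$ I can find an $\alpha_j$-periodic $h_j\in\Cb(\RR)$ solving $\Diff_{\alpha_n}h_j=g_j$, then $\Diff_{\alpha_n}\bigl(f-\sum_{j<n}h_j\bigr)=g-\sum_{j<n}g_j=0$, so that $f_n:=f-\sum_{j<n}h_j$ is $\alpha_n$-periodic and bounded continuous and $f=h_1+\cdots+h_{n-1}+f_n$ is the sought decomposition. The whole matter thus reduces to solving, for a given $\alpha_j$-periodic $g_j\in\Cb(\RR)$, the cohomological equation $h_j(\cdot+\alpha_n)-h_j(\cdot)=g_j$ inside the $\alpha_j$-periodic functions.

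Since $g_j$ and $h_j$ are $\alpha_j$-periodic, this equation lives on the compact circle $\RR/\alpha_j\ZZ$, on which translation by $\alpha_n$ acts as the rotation $R_j$ through $\alpha_n\bmod\alpha_j$. I would split according to whether $\alpha_n$ and $\alpha_j$ are commensurable. If they are, $R_j$ is a rational rotation of some finite order $q$, and $\Diff_{\alpha_n}h_j=g_j$ is solvable by a continuous $h_j$ precisely when the sum of $g_j$ over each finite $R_j$-orbit vanishes; this is a soft, finitary condition, and indeed the fully commensurable situation largely collapses to finite difference operators on a finite-dimensional torus, where Fourier analysis on a finite group settles matters. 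The delicate case is $\alpha_n/\alpha_j$ irrational: then $R_j$ is an irrational, hence minimal and uniquely ergodic, rotation, and I would invoke the Gottschalk--Hedlund theorem, which asserts that a continuous $g_j$ is a continuous coboundary $g_j=h_j\circ R_j-h_j$ if and only if its Birkhoff sums $\sum_{k=0}^{N-1}g_j(\cdot+k\alpha_n)$ are uniformly bounded. This is exactly the mechanism that upgrades a formal, Fourier-level inversion—which would otherwise be blocked by small divisors—to a genuine continuous solution.

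The uniform boundedness of those Birkhoff sums is where I expect the real obstacle to sit. Summing the relation $g=\sum_{j<n}g_j$ along the $\alpha_n$-orbit telescopes the left-hand side into $f(\cdot+N\alpha_n)-f(\cdot)$, which is bounded by $2\|f\|_\infty$; but this controls only the total sum $\sum_{j<n}\sum_{k=0}^{N-1}g_j(\cdot+k\alpha_n)$, not the individual Birkhoff sums of each $g_j$ that Gottschalk--Hedlund requires. Unique ergodicity does force each mean $M(g_j)$ to vanish, since their sum is the $\tfrac1N$-limit of a bounded quantity, yet zero mean is far weaker than uniform boundedness of partial sums. The bare induction hypothesis yields some decomposition $g=\sum_{j<n}g_j$ with no control on these partial sums, so one cannot treat the pieces independently. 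Overcoming this appears to demand strengthening the inductive statement to propagate quantitative control of the relevant partial sums through every step—which is precisely the content of the elementary but intricate combinatorial estimates in the original argument of Laczkovich and R\'ev\'esz, and explains why no purely conceptual (Fourier- or semigroup-theoretic) proof of the decomposition property of $\Cb(\RR)$ is presently known.
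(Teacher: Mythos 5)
Your strategy is the same as the one in the paper (and in the original Laczkovich--R\'ev\'esz argument): induct on $n$, peel off one period by passing to $g:=\Diff_{\alpha_n}f$, decompose $g$ by the induction hypothesis, and lift each $g_j$ to an $\alpha_j$-periodic $h_j$ with $\Diff_{\alpha_n}h_j=g_j$ via the Gottschalk--Hedlund criterion. But as you yourself concede in the last paragraph, the proof does not close: the Gottschalk--Hedlund theorem requires the uniform boundedness of the Birkhoff sums $\sum_{k=0}^{N-1}g_j(x+k\alpha_n)$ for \emph{each individual} $g_j$, while telescoping only bounds the sum over $j$ of these quantities by $2\|f\|_\infty$. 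An arbitrary decomposition $g=g_1+\cdots+g_{n-1}$ supplied by the bare induction hypothesis carries no such control, and producing (or correcting the $g_j$ into) a decomposition for which each partial sum is uniformly bounded is precisely the elementary-but-tricky combinatorial core of the proof. Naming the obstacle accurately is not the same as overcoming it, so what you have is an honest reduction of the theorem to its hardest step, not a proof.

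A secondary gap concerns commensurability. You propose to treat the case $\alpha_n/\alpha_j\in\QQ$ inside the lift-up step, asserting that the finite-orbit solvability condition is ``soft''; in fact that condition (vanishing of the sum of $g_j$ over each finite orbit of the rational rotation) can fail for the particular $g_j$ the induction hands you, and nothing in your argument guarantees it. The paper sidesteps this entirely by a preliminary reduction: commensurable periods are merged using the fact that for bounded $f$ the equation $\Diff_\alpha^m f=0$ already forces $\Diff_\alpha f=0$ (Lemma \ref{l:commensurable}); if the periods are not pairwise incommensurable this yields a difference equation of lower order, the induction hypothesis then shows $f\in\AP(\RR)$, and Proposition \ref{p:AP} (mean ergodicity of the shifts on $\AP(\RR)$) finishes that case. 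Only after this reduction is the lift-up machinery invoked, and then exclusively for pairwise incommensurable periods, where the rotation is minimal and Gottschalk--Hedlund applies. You would do well to adopt that reduction, and then to confront the boundedness of the individual Birkhoff sums head-on, since that is where the theorem actually lives.
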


We devote the rest of this section to the proof of Theorem \ref{th:LRBC}. We slightly differ from the original proof of \cite{laczkovich/revesz:1989}, in  exploiting Proposition \ref{p:AP}.

\smallskip\noindent  For $n=1$ the statement is trivial, so we argue by induction. Suppose $f\in \Ce(\RR)$ satisfies
\eqref{eq:kern}.  We group the steps according to commensurability:
\begin{equation*}\{\alpha_1,\dots,\alpha_n\}=\{\alpha_1,\dots,\alpha_a\}\cup\{\beta_1,\dots,\beta_b\}
\cup\dots \{\rho_1,\dots,\rho_r\}.\end{equation*} Denote the \emph{least common multiple} of these by  $\alpha,\beta,\dots,\rho$, i.e., $\alpha$ is the non-negative generator of the cyclic group $\bigcap_{j=1}^a \alpha_j\ZZ$ etc. Then from \eqref{eq:kern} we obtain
\begin{equation}\label{eq:commengrouped}
\Diff_{\alpha}^a\dots\Diff_{\rho}^r f=0.
\end{equation}

\begin{lemma}\label{l:commensurable} Let $f\in \Bb(\RR)$
and $\alpha\in\RR$, $n\in\NN$. If $\Diff_{\alpha}^nf=0$, then
$\Diff_{\alpha}f=0$.
\end{lemma}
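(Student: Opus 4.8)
The plan is to reduce the statement to a one-variable fact about bounded sequences, exploiting that $\Diff_\alpha$ couples only the values of $f$ along a single orbit $x+\alpha\ZZ$. Fix $x\in\RR$ and consider the bi-infinite sequence $a_k:=f(x+k\alpha)$, $k\in\ZZ$, which is bounded by $\|f\|_\infty$ since $f\in\Bb(\RR)$. A direct computation gives $\Diff_\alpha f(x+k\alpha)=a_{k+1}-a_k$, and iterating, $\Diff_\alpha^n f(x+k\alpha)=(\Delta^n a)_k$, where $(\Delta b)_k:=b_{k+1}-b_k$ denotes the ordinary forward difference on sequences. Thus the hypothesis $\Diff_\alpha^n f=0$ says precisely that $\Delta^n a=0$ for every orbit.

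First I would invoke the elementary structure theorem for the kernel of $\Delta^n$: a sequence $(a_k)_{k\in\ZZ}$ satisfies $\Delta^n a=0$ if and only if $a_k=p(k)$ for some polynomial $p$ of degree at most $n-1$ (the solution space being spanned by $\binom{k}{0},\dots,\binom{k}{n-1}$). Then boundedness is used decisively: since $|a_k|\le\|f\|_\infty$ for all $k\in\ZZ$, the polynomial $p$ is bounded on all of $\ZZ$, hence constant. Consequently $a_1=a_0$, that is $f(x+\alpha)=f(x)$; as $x$ was arbitrary, this yields $\Diff_\alpha f=0$.

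An equivalent, more self-contained route is induction on $n$, reducing everything to a single telescoping argument. For $n=1$ the assertion is trivial. For $n\ge2$ set $g:=\Diff_\alpha f$, which is again bounded with $\|g\|_\infty\le2\|f\|_\infty$; the hypothesis becomes $\Diff_\alpha^{n-1}g=0$, so by the induction hypothesis $\Diff_\alpha g=0$, i.e.\ $g$ is $\alpha$-periodic. The telescoping identity $f(x+k\alpha)=f(x)+\sum_{j=0}^{k-1}g(x+j\alpha)=f(x)+k\,g(x)$ (valid for all $k\in\ZZ$ by the periodicity of $g$) together with the boundedness of $f$ then forces $g(x)=0$ for every $x$. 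I do not expect a genuine obstacle here, as the lemma is elementary; the only points requiring care are the clean statement that the kernel of $\Delta^n$ consists of polynomials (equivalently, the telescoping step), and the observation that boundedness is indispensable: without it the identity $\id(x)=x$ satisfies $\Diff_\alpha^2\id=0$ yet $\Diff_\alpha\id\ne0$, which is exactly the counterexample already noted in the introduction.
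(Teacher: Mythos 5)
Your proposal is correct, and your second route (induction on $n$ plus the telescoping identity $f(x+k\alpha)=f(x)+k\,g(x)$ forcing $g=0$ by boundedness) is exactly the paper's own argument, which reduces to the case $n=2$ and applies the same telescoping estimate $\|g\|_\infty\le\tfrac{2}{N}\|f\|_\infty\to0$. Your first route, via the fact that $\ker\Delta^n$ on $\ZZ$-indexed sequences consists of polynomials of degree at most $n-1$ and a bounded polynomial on $\ZZ$ is constant, is only a cosmetic repackaging of the same idea.
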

\begin{proof}
Obviously, it suffices to work out the proof for  $n=2$.
Let $g:=\Diff_{\alpha} f$. By condition, $\Diff_{\alpha} g=0$, so $g$ is $\alpha$-periodic. Therefore,
\begin{equation*}
f(x+N\alpha)=f(x)+\sum_{i=0}^{N-1} \Diff_{\alpha} f(x+i\alpha)=f(x)+Ng(x),
\end{equation*}
thus $ \|g\|_{\infty} \leq \frac{2}{N} \|f\|_{\infty} \to 0$ ($N\to \infty$).
That is, $g:=\Diff_\alpha f=0$, as needed. \qed\end{proof}
As a consequence, from \eqref{eq:commengrouped} we obtain
\begin{equation}\label{eq:commreduced}
\Diff_{\alpha}\dots\Diff_{\rho} f=0.
\end{equation}
Hence in case $\alpha_1,\dots, \alpha_n$ are not all pairwise incommensurable then $f$ is also a solution of a difference equation of order less than $n$. We can therefore apply the induction hypothesis providing that $f$ has an $(\alpha,\dots, \rho)$-decomposition. So in particular $f\in \AP(\RR)$, which space has the decomposition property in view of Proposition \ref{p:AP}, and so we are done.

\smallskip\noindent It remains to handle the case when $\alpha_1,\dots, \alpha_n$ are pairwise incommensurable. The crux of the proof is thus the following:
\begin{lemma}\label{l:incommensurable} Let $\alpha_1,\dots,\alpha_n$ be pairwise
incommensurable, and let $f\in \BC(\RR)$ satisfy
\eqref{eq:kern}. Then $f$ has an
$(\alpha_1,\dots,\alpha_n)$-decomposition in $\BC(\RR)$.
\end{lemma}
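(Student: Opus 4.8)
The plan is to reduce everything to showing that $f$ itself is uniformly almost periodic, for then Proposition \ref{p:AP} furnishes the desired $(\alpha_1,\dots,\alpha_n)$-decomposition inside $\AP(\RR)\subseteq\BC(\RR)$. Thus the entire difficulty is concentrated in proving $f\in\AP(\RR)$. I would argue by induction on $n$, the base case $n=1$ being merely the definition of $\alpha_1$-periodicity. For the inductive step, observe that for each fixed $j$ the function $g_j:=\Diff_{\alpha_j}f\in\BC(\RR)$ satisfies the difference equation $\prod_{i\neq j}\Diff_{\alpha_i}g_j=0$ of order $n-1$ with the pairwise incommensurable periods $\{\alpha_i:i\neq j\}$. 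By the induction hypothesis (Lemma \ref{l:incommensurable} at level $n-1$, applicable since these periods are again pairwise incommensurable) each $g_j$ admits a decomposition in $\BC(\RR)$, and a finite sum of continuous periodic functions is uniformly almost periodic; hence all $n$ first differences $\Diff_{\alpha_1}f,\dots,\Diff_{\alpha_n}f$ lie in $\AP(\RR)$, while $f$ is bounded.

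The remaining task is an \emph{integration} step: recover $f\in\AP(\RR)$ from the almost periodicity of all its first differences together with its boundedness. Passing to the formal Fourier data, the frequencies available to $f$ lie in $\bigcup_j(2\pi/\alpha_j)\ZZ$, the zero set of $\hat\mu$ with $\mu=\mu_{\alpha_1}*\cdots*\mu_{\alpha_n}$, and a coefficient $c_\lambda$ of $f$ is linked to the corresponding coefficient of $g_j$ through the factor $\ee^{i\lambda\alpha_j}-1$. One would like to invert in whichever direction $\alpha_j$ makes this divisor largest, hoping that pairwise incommensurability keeps at least one divisor bounded away from $0$ for every $\lambda\neq0$. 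The genuinely resonant frequency $\lambda=0$, which corresponds to a linearly growing primitive, is excluded outright by the boundedness of $f$.

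The main obstacle is that this frequency-side heuristic does not close by itself. Since the resonance sets $(2\pi/\alpha_j)\ZZ$ are each densely approximable, a single frequency may lie close to several of them simultaneously (a simultaneous Diophantine approximation phenomenon), so a real small-divisor difficulty persists and naive Fourier inversion is not legitimate. Moreover, as the preceding discussion stresses, no shortcut through uniform continuity is available, and the Fourier expansion of a mean periodic function converges only in a delicate complex sense, so coefficient bounds do not translate into uniform approximation by trigonometric polynomials — that is, into genuine membership in $\AP(\RR)$.

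I therefore expect the decisive work to be a quantitative, essentially elementary estimate in the spirit of Laczkovich and R\'ev\'esz: one controls the oscillation of $f$ on long intervals directly, using that the differences $g_j$ are now almost periodic (hence uniformly approximable and uniformly controllable) together with the uniform bound $\|f\|_\infty<\infty$, so as to establish either relative compactness of the translates $\{f(\cdot+t):t\in\RR\}$ or the uniform continuity of $f$ (after which Kahane's Proposition \ref{p:JPK}, applicable because $f$ is mean periodic, yields $f\in\AP(\RR)$). Once $f\in\AP(\RR)$ is secured, Proposition \ref{p:AP} produces the periodic components and closes the induction.
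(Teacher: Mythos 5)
Your write-up is a plan, not a proof: the decisive step is missing, and you say so yourself. You reduce the problem to an ``integration'' statement --- from $f\in\BC(\RR)$ and $\Diff_{\alpha_1}f,\dots,\Diff_{\alpha_n}f\in\AP(\RR)$ conclude $f\in\AP(\RR)$ --- but you never prove it. The two exits you sketch are both blocked: the Fourier/small-divisor route fails for exactly the reason you identify (simultaneous Diophantine approximation of the resonance sets $(2\pi/\alpha_j)\ZZ$, and the lack of a usable convergence theory for mean periodic expansions), and the route via uniform continuity plus Kahane's Proposition \ref{p:JPK} presupposes precisely the implication ``$f\in\BC(\RR)$ \& \eqref{eq:kern} $\Rightarrow f\in\BUC(\RR)$'' which, as the surrounding discussion stresses, has no known direct proof. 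The closing sentence ``I therefore expect the decisive work to be a quantitative, essentially elementary estimate\dots'' is an acknowledgement that the heart of the lemma has not been supplied.

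The paper's argument is structured differently and avoids ever having to show $f\in\AP(\RR)$ or $f\in\BUC(\RR)$ first. One takes a \emph{single} difference $g:=\Diff_{\alpha_n}f$, which satisfies an equation of order $n-1$; by induction $g=g_1+\cdots+g_{n-1}$ with $\Diff_{\alpha_j}g_j=0$. One then \emph{lifts} each $g_j$ to an $f_j\in\BC(\RR)$ with $\Diff_{\alpha_j}f_j=0$ and $\Diff_{\alpha_n}f_j=g_j$; the last component $f_n:=f-(f_1+\cdots+f_{n-1})$ is then automatically $\alpha_n$-periodic because $\Diff_{\alpha_n}f_n=g-(g_1+\cdots+g_{n-1})=0$. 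The lift-up is Lemma \ref{l:liftequiv}, i.e.\ the Gottschalk--Hedlund theorem for the minimal rotation $x\mapsto x+\alpha_n$ on $\RR/\alpha_j\ZZ$: solvability of the cohomological equation is equivalent to uniform boundedness of the Birkhoff sums of $g_j$, and verifying that bound (using $\|f\|_\infty<\infty$) is the localized quantitative work you were groping for. If you want to salvage your outline, this lift-up lemma is the missing ingredient; your global ``recover $f$ from all its first differences'' formulation is strictly harder than what is needed and is not established by anything you wrote.
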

To prove this lemma it is natural to get rid of one period and reduce the situation to a difference equation of order $n-1$ by considering
$%%%$
g:=\Diff_{\alpha_n} f,~~
$%%%$
which then satisfies $\Diff_{\alpha_1}\dots\Diff_{\alpha_{n-1}} g=0$,
and thus by the induction hypothesis
\begin{equation*}
g=g_1+\dots+g_{n-1}\qquad (\Diff_{\alpha_j} g_j=0,~j=1,\dots,n-1).
\end{equation*}
If $f$ were subject to the representation \eqref{eq:periodd},
then we could guess $\Diff_{\alpha_{n}} f_j=g_j$. So we try to
``lift up'' the $g_j$ to some functions $f_j$ with $\Diff_{\alpha_j}
f_j=\Diff_{\alpha_j} g_j=0$ and $\Diff_{\alpha_n}f_j=g_j$.
Suppose this works, we find such $f_j\in \BC(\RR)$. Then
\begin{equation*}
f_n:=f-(f_1+\dots+f_{n-1}) \in \BC(\RR),
\end{equation*}
and $\Diff_{\alpha_n}f_n=g-(g_1+\dots+g_{n-1})=0$, so $f$ has a
decomposition \eqref{eq:periodd}.
So it remains to see the possibility of a lift-up for any incommensurable periods.
\begin{lemma}\label{l:liftequiv} Let $g\in \Ce(\RR)$, let
$\beta,\gamma\in\RR$ be incommensurable, and suppose
$\Diff_{\beta} g =0$. Then the following are equivalent:
\begin{enumerate}[(i)]
\item There exists $K>0$ such that
\begin{equation*}\Bigl| \sum_{j=0}^{k-1}
    g(x+j\beta)\Bigr|<K \quad\mbox{(for $x\in\RR, k\in \NN$)}.\end{equation*}

\item There is $h\in \Ce(\RR)$ such that $\Diff_{\beta}
    h=0$ and $\Diff_{\gamma} h=g$.
\end{enumerate}
\end{lemma}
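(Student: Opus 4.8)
The starting observation is that (ii) is a \emph{cohomological (telescoping) equation}: writing $R$ for the shift $x\mapsto x+\gamma$, condition (ii) asks for a continuous $\beta$-periodic $h$ with $h\circ R-h=g$, and iterating this identity gives
\begin{equation*}
h(x+k\gamma)-h(x)=\sum_{j=0}^{k-1}g(x+j\gamma)\qquad(k\in\NN).
\end{equation*}
Since a continuous $\beta$-periodic function is automatically bounded (it is determined by its values on the compact interval $[0,\beta]$), the implication (ii)$\Rightarrow$(i) is immediate: the displayed telescoping sum is bounded by $2\|h\|_\infty$, so (i) holds with $K=2\|h\|_\infty$. The entire content of the lemma is therefore the converse (i)$\Rightarrow$(ii).

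For the converse I would pass to the circle. As $g$ is $\beta$-periodic it descends to a continuous function $\bar g$ on the compact group $\TT_\beta:=\RR/\beta\ZZ$, and incommensurability of $\beta,\gamma$ means that $R:\xi\mapsto\xi+\gamma$ on $\TT_\beta$ is an \emph{irrational}, hence \emph{minimal} (and uniquely ergodic), homeomorphism. Condition (i) says precisely that the Birkhoff sums $\sum_{j=0}^{k-1}\bar g(R^{j}\xi)$ of $\bar g$ under $R$ are uniformly bounded, and finding $h$ as in (ii) amounts to finding a continuous $\bar h$ on $\TT_\beta$ with $\bar h\circ R-\bar h=\bar g$ and lifting it back to $\RR$. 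This is exactly the situation of the \emph{Gottschalk--Hedlund theorem}: for a minimal homeomorphism a continuous function is a continuous coboundary if and only if its Birkhoff sums are uniformly bounded (indeed, bounded at a single point). Invoking it closes the argument; so the task reduces to proving this classical fact in the present concrete setting.

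To keep the proof self-contained I would argue via the skew product. On $\TT_\beta\times\RR$ consider the homeomorphism $\tilde R(\xi,s):=(R\xi,\,s+\bar g(\xi))$, whose iterates are $\tilde R^{k}(\xi,0)=(R^k\xi,\sum_{j=0}^{k-1}\bar g(R^j\xi))$. By (i) (applied also at the points $R^{-k}\xi$ for the backward orbit) the full $\tilde R$-orbit of any $(\xi,0)$ stays in the compact set $\TT_\beta\times[-K,K]$; let $Y$ be the closure of this orbit and $M\subseteq Y$ a minimal nonempty closed $\tilde R$-invariant subset (which exists by Zorn's lemma). As $R$ is minimal and the projection $\pi:\TT_\beta\times\RR\to\TT_\beta$ is equivariant, $\pi(M)$ is closed, $R$-invariant and nonempty, hence $\pi(M)=\TT_\beta$. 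The decisive step is to show $M$ is the graph of a continuous function $\varphi$. Here one uses that the vertical translations $\tau_c:(\xi,s)\mapsto(\xi,s+c)$ commute with $\tilde R$: if some fibre of $M$ contained two points differing by $c\neq0$, then $\tau_c M$ would be a minimal set meeting $M$, hence equal to $M$, so $M$ would be invariant under every $\tau_{kc}$ ($k\in\ZZ$) and that fibre would be unbounded, contradicting $M\subseteq\TT_\beta\times[-K,K]$. Thus each fibre is a single point, $M=\{(\xi,\varphi(\xi)):\xi\in\TT_\beta\}$, and compactness makes $\varphi$ continuous (the bijection $\pi|_M:M\to\TT_\beta$ of compacta is a homeomorphism). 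Finally $\tilde R$-invariance reads $\varphi(R\xi)=\varphi(\xi)+\bar g(\xi)$, i.e.\ $\bar h:=\varphi$ solves $\bar h\circ R-\bar h=\bar g$; lifting $\bar h$ to a $\beta$-periodic continuous $h$ on $\RR$ gives $\Diff_\gamma h=g$ and $\Diff_\beta h=0$.

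The main obstacle is exactly this graph step: boundedness of the Birkhoff sums yields only a \emph{compact} invariant set, and since the modulus of continuity of the partial sums $\sum_{j<k}g(\cdot+j\gamma)$ may deteriorate as $k\to\infty$, one cannot simply extract a uniformly convergent subsequence by Arzel\`a--Ascoli. It is minimality of the irrational rotation, together with the commuting vertical $\RR$-action, that forces the compact invariant set to be a single-valued continuous graph. As a byproduct one sees the necessary condition $Mg=0$: by unique ergodicity $\frac1k\sum_{j<k}g(\cdot+j\gamma)\to Mg$ uniformly, while (i) forces this Ces\`aro limit to vanish.
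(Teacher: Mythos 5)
Your proof is correct and follows essentially the same route as the paper: the paper disposes of this lemma in one line by citing the Gottschalk--Hedlund theorem \cite[Thm.{} 14.11]{GH} for the minimal rotation of the circle, and what you have written out is exactly the standard skew-product/minimal-set proof of that theorem in this concrete setting, so you are merely supplying the details that the paper delegates to the reference. One point worth recording: your telescoping identity shows that the sum in (i) must run over steps of $\gamma$, i.e.\ it should read $\sum_{j=0}^{k-1}g(x+j\gamma)$ --- as printed (with $j\beta$, while $g$ is $\beta$-periodic) the sum collapses to $k\,g(x)$ and the equivalence would fail --- so you have silently, and correctly, repaired a typo in the statement; note that the paper's own one-line proof likewise interchanges the roles of $\beta$ and $\gamma$ when identifying the circle and the rotation, whereas your choice of $\RR/\beta\ZZ$ with the rotation by $\gamma$ is the right one.
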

\begin{proof} This is a special case of a well-known ergodic theory
result, see \cite[Thm.{} 14.11, p.{} 135]{GH}, as putting $Y:=\RR/\gamma\ZZ$, the homeomorphism
$\Theta(x):=x+\beta \mod \gamma$ has minimal orbit-closure $Y$
for every $x$.
\qed\end{proof}

\section{Generalizations to linear operators}\label{sec:mean}
For $\alpha\in \RR$ the translation by $\alpha$ acts as a homeomorphism on $\RR$. Consider the so-called \emph{Koopman  (or composition) operator}, in this case called the \emph{shift operator},
\begin{equation*}
T_{\alpha}:\RR^\RR\to \RR^\RR, \quad T_{\alpha}f(x):=f(x+\alpha).
\end{equation*}
Observe that the solutions of the difference equation \eqref{eq:kern} form the subspace
\begin{equation*}
\ker(T_{\alpha_1}-\Id)\cdots (T_{\alpha_n}-\Id)
\end{equation*}
while the functions having a periodic decomposition \eqref{eq:periodd}
are the elements of
\begin{equation*}
\ker(T_{\alpha_1}-\Id)+\cdots+\ker(T_{\alpha_n}-\Id).
\end{equation*}
Then  Problem \ref{p:Ruzsa} can be rephrased so as whether the equality
\begin{equation}
\ker(T_{\alpha_1}-\Id)\cdots (T_{\alpha_n}-\Id)=\ker(T_{\alpha_1}-\Id)+\cdots+\ker(T_{\alpha_n}-\Id)
\end{equation}
holds?  Of course, one can restrict the question by considering linear subspaces of $\RR^\RR$ that are invariant under the occurring operators. The equality then means the decomposition property of $\FC$. Or more generally one can ask the following:
\begin{problem}\label{p:kereqT}
Let $E$ be a linear space and let $T_1,\dots, T_n:E\to E$ be commuting linear operators. Find conditions such that
\begin{equation}\label{eq:kereqT}
\ker(T_1-\Id)\cdots( T_n-\Id)=\ker(T_1-\Id)+\cdots+\ker(T_{n}-\Id).
\end{equation}
\end{problem}
\begin{remark}
For a system of pairwise commuting operators $T_1,\dots, T_n$ the inclusion
``$\ker(T_1-\Id)\cdots( T_n-\Id)\supseteq\ker(T_1-\Id)+\cdots+\ker(T_{n}-\Id)$''
trivially holds. This corresponds to the trivial implication ``\eqref{eq:periodd} $\Rightarrow$ \eqref{eq:kern}''.
\end{remark}
We start with a model result. Let $E$ be a Banach space and denote by $\LLL(E)$ the space of bounded linear operators on $E$. Recall that $T\in\LLL(E)$  is \emph{mean ergodic} if its Ces\`aro means
\begin{equation*}
\frac{1}N\sum_{j=1}^{N} T^j
\end{equation*}
converge in the strong operator topology, i.e., pointwise for $x\in E$. In this case the limit $P$ is the so-called \emph{mean ergodic projection} onto $\ker(T-\Id)$ and one has $E=\ran P\oplus \ker P$ and $\ker P=\clran(T-\Id)$, where $\ran(T)$ denotes the range of the operator $T$, see \cite[Sec.{} 8.4]{EFHN}.

\begin{proposition}\label{p:meanerg}
Let $E$ be a Banach space and $T_1,\dots, T_n\in \LLL(E)$ be commuting
mean ergodic operators with. Then the equality \eqref{eq:kereqT} holds.
\end{proposition}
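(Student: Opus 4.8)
The inclusion ``$\supseteq$'' in \eqref{eq:kereqT} is the trivial one recorded in the Remark, so the entire content lies in proving that any $f\in\ker(T_1-\Id)\cdots(T_n-\Id)$ admits a decomposition $f=f_1+\dots+f_n$ with $(T_j-\Id)f_j=0$. Throughout I abbreviate $S_j:=T_j-\Id$ and let $P_j$ be the mean ergodic projection of $T_j$, so that $\ran P_j=\ker S_j$ and $P_j$ is the strong limit of the Ces\`aro means $\frac1N\sum_{k=1}^N T_j^k$.

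The first step---on which everything rests---is a commutation remark. Since $T_i$ commutes with $T_j$, it commutes with every power $T_j^k$, hence with their Ces\`aro means; and as left multiplication by the fixed bounded operator $T_i$ is strongly continuous, it commutes with the strong limit $P_j$ as well. Thus every $P_j$ commutes with every $T_i$, and therefore with every $S_i$ (and, by the same reasoning, with every $P_i$). I would state this once and use it freely.

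The second step is a telescoping decomposition. Put $Q_0:=\Id$ and $Q_m:=(\Id-P_m)\cdots(\Id-P_1)$ for $1\le m\le n$. The identity $P_kQ_{k-1}=Q_{k-1}-Q_k$ telescopes to $f=\sum_{k=1}^{n-1}P_kQ_{k-1}f+Q_{n-1}f$. Accordingly I set $f_k:=P_kQ_{k-1}f$ for $k<n$ and $f_n:=Q_{n-1}f$. Each $f_k$ with $k<n$ lies in $\ran P_k=\ker S_k$ by construction, so the only nontrivial point is that the remainder $f_n=Q_{n-1}f$ lies in $\ker S_n$.

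That last point is the crux, and I would isolate it as an induction: \emph{for every $m$ the operator $Q_m$ annihilates $\ker(S_1\cdots S_m)$}. The base case $m=1$ is immediate, since $P_1$ fixes $\ker S_1=\ran P_1$, so $(\Id-P_1)$ kills it. For the step, take $g\in\ker(S_1\cdots S_m)$; then $S_mg\in\ker(S_1\cdots S_{m-1})$, so $Q_{m-1}S_mg=0$ by the inductive hypothesis, and commuting $S_m$ past $Q_{m-1}$ gives $S_mQ_{m-1}g=0$, i.e.\ $Q_{m-1}g\in\ker S_m=\ran P_m$; hence $(\Id-P_m)Q_{m-1}g=0$, that is $Q_mg=0$. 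Finally, since $S_1\cdots S_nf=0$ we have $g:=S_nf\in\ker(S_1\cdots S_{n-1})$, so applying the claim with $m=n-1$ and commuting once more yields $S_nQ_{n-1}f=Q_{n-1}S_nf=0$, as required. The single delicate point is that all these commutation steps genuinely demand the mean ergodic projections: an arbitrary projection onto $\ker S_j$ need not commute with the other $T_i$, which is precisely why mean ergodicity---rather than mere closedness or complementability of the fixed spaces---is the correct hypothesis.
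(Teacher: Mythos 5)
Your proof is correct and follows essentially the same route as the paper: commuting mean ergodic projections, the telescoping decomposition $f=\sum_k P_kQ_{k-1}f+Q_{n-1}f$, and an argument that the tail term is $T_n$-invariant. Your induction showing that $Q_m$ annihilates $\ker(S_1\cdots S_m)$ is in fact a welcome elaboration of the inclusion $\ker(T_1-\Id)\cdots(T_n-\Id)\subseteq\ker(\Id-P_1)\cdots(\Id-P_n)$, which the paper asserts without detail before invoking $\ran Q\cap\ker Q=\{0\}$ for the projection $Q=(\Id-P_1)\cdots(\Id-P_n)$.
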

\begin{proof}
Since the operators $T_1, \dots, T_n$ commute, so do the mean ergodic projections $P_1,\dots, P_n$, and actually all occurring operators commute with each other. A moment's thought explains that the \emph{direct} decomposition
\begin{align*}
E&=\ran P_1\oplus \ran P_2(\Id-P_1)\oplus \cdots \oplus \ran\bigl( P_n(\Id-P_{n-1})\cdots(\Id-P_1)\bigr)\\
&\quad\oplus \ran\bigl( (\Id-P_n)(\Id-P_{n-1})\cdots(\Id-P_1)\bigr)
\end{align*}
is valid, i.e. for any $x\in E$ we can write $x=x_1+\cdots +x_n+y$ with $x_i\in \ran P_i = \ker(T_i-\Id)$ and $y\in\ran(\Id-P_1)\cdots (\Id-P_n)$. Let now $x\in \ker (T_1-\Id)\cdots (T_n-\Id)$: then $(T_1-\Id)\cdots (T_n-\Id)y=0$. It follows that $y\in \ker (T_1-\Id)\cdots (T_n-\Id)\subseteq \ker (\Id-P_1)\cdots (\Id-P_n)$, thus $y\in  \ran(\Id-P_1)\cdots (\Id-P_n) \cap \ker (\Id-P_1)\cdots (\Id-P_n)$. However, $(\Id-P_1)\cdots (\Id-P_n)$ is a projection, so from this $y=0$ follows.
\qed\end{proof}
Actually, the proof above and the result itself appears in \cite{KadetsAve} in a slightly more general form, and as a matter of fact even much earlier in \cite{laczkovich/revesz:1990}. None of the papers however formulated it by using the notion of mean ergodicity.
\begin{example}
Since shift operators $T_\alpha$ are all mean ergodic on $E=\AP(\RR)$ we obtain another proof of Proposition \ref{p:AP}. To see that $T_\alpha$ is mean ergodic it suffices to note that $\{T_\alpha^n:n\in \NN\}$ is compact in the strong operator topology and to invoke \cite[Thm.{} 8.20]{EFHN}; or alternatively one can use \cite[Sec.{} I.8.6$^\circ$]{Besi} as in the proof of Proposition \ref{p:AP}.
\end{example}
\begin{remark}
\begin{enumerate}[a)]
\item One trivially has $\|P_j\|\leq \|T_j\|$ and $\|\Id-P_j\|\leq 1+\|T_j\|$. Suppose $\|T_j\|\leq 1$ for $j=1,\dots, n$. The proof above yields that the decomposition obtained is actually
\begin{equation*}
x=P_1x+P_2(\Id-P_1)x+\cdots+P_n(\Id-P_{n-1})(\Id-P_2)(\Id-P_1)x.
\end{equation*}
Hence $x$ has a decomposition $x=x_1+\cdots+x_n$ with $x_j\in \ker(T_j-\Id)$ and
\begin{equation*}
\max_{j=1,\dots, n}\|x_n\|\leq2^{n-1}\|x\|.
\end{equation*}
\item If $E$ is a Hilbert space, then the mean ergodic projections $P_j$ are orthogonal, see \cite[Thm.{} 8.6]{EFHN}.  So that $\Id-P_j$ is also an orthogonal, hence contractive, projection. This implies that $x\in \ker (T_1-\Id)\cdots (T_n-\Id)$  has a decomposition $x=x_1+\cdots+x_n$ with $x_j\in \ker(T_j-\Id)$ and
\begin{equation*}
\max_{j=1,\dots, n}\|x_n\|\leq\|x\|.
\end{equation*}
\item In the original setting of the decomposition problem Laczkovich and R\'ev\'esz have shown that on $E=\Cb(\RR)$ with $T_j$ being translations by $a_j$ a function $f$ satisfying \eqref{eq:kern} has a decomposition $f=f_1+\cdots+ f_n$ with
\begin{equation*}
\max_{j=1,\dots, n}\|f_j\|_\infty \leq 2^{n-2}\|f\|.
\end{equation*}
The estimate is sharp for $n=2$, see \cite{laczkovich/revesz:1989}.
\end{enumerate}
\end{remark}
\begin{problem}
Find the best constant $C_n$ such that any $x\in \ker (T_1-\Id)\cdots (T_n-\Id)$ has some decomposition $x=x_1+\cdots+x_n$ with $x_j\in \ker(T_j-\Id)$ and
\begin{equation*}
\max_{j=1,\dots, n}\|x_n\|\leq C_n \|x\|.
\end{equation*}
We saw $C_n\leq 2^{n-1}$ in general, $C_n\leq 2^{n-2}$ for translations on $\Cb(\RR)$. Are these estimates sharp? Is it true that $C_n=1$ for translations on $\Cb(\RR)$ for every $n$?
%%%\in \NN$, $n\geq1$?
Under which conditions on $E$ and/or $T_1,\cdots, T_n$ does $C_n=1$ hold?
\end{problem}
\begin{example}\label{ex:refl}
It is a classical result that a power bounded operator on a reflexive Banach space $E$ is mean ergodic. As a consequence, commuting power bounded operators on a reflexive Banach space $E$  fulfill the conditions of Proposition \ref{p:meanerg}, hence \eqref{eq:kereqT} holds true. See also \cite[Cor.{} 2.6]{laczkovich/revesz:1990}
\end{example}
\begin{theorem}[M.{} Laczkovich and Sz.{} R\'ev\'esz  \cite{laczkovich/revesz:1990}]\label{th:topvsp} Let $X$ be a topological vector
space and $T_1,\dots,T_n$ be commuting continuous linear operators on $X$. Suppose that for every $x\in X$ and $j\in \{1,\dots,n\}$ the closed convex hull of $\{T_j^mx:m\in\NN\}$ contains a fixed point of $T_j$, that is
\begin{equation*}
\clconv\{T_j^mx:m\in\NN\}\cap \ker(T_j-\Id)\neq \emptyset.
\end{equation*}
Then \eqref{eq:kereqT} holds.
\end{theorem}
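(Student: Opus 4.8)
The plan is to establish the nontrivial inclusion
$\ker(T_1-\Id)\cdots(T_n-\Id)\subseteq\ker(T_1-\Id)+\cdots+\ker(T_n-\Id)$
(the reverse inclusion being trivial) by induction on $n$, peeling off one operator at a time. For $n=1$ there is nothing to prove. The guiding idea is to imitate the mean-ergodic argument of Proposition~\ref{p:meanerg}: there the factor corresponding to $T_n$ was split off using the mean-ergodic projection $P_n$ onto $\ker(T_n-\Id)$, and here the convex-hull hypothesis is exactly what replaces $P_nx$ when no such projection is available. So, assuming the statement for $n-1$ operators, I would take $x\in\ker(T_1-\Id)\cdots(T_n-\Id)$ and produce a single fixed point of $T_n$ that absorbs the $T_n$-direction.

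Concretely, applying the hypothesis to this $x$ and to $j=n$ yields a point $p\in\clconv\{T_n^mx:m\in\NN\}$ with $T_np=p$, that is $p\in\ker(T_n-\Id)$. I set $q:=x-p$ and claim that $q\in\ker(T_1-\Id)\cdots(T_{n-1}-\Id)$, which is precisely what is needed to feed $q$ into the induction hypothesis. To verify the claim, write $R:=(T_1-\Id)\cdots(T_{n-1}-\Id)$ and $z:=Rx$. Since $(T_n-\Id)z=(T_1-\Id)\cdots(T_n-\Id)x=0$, we have $T_nz=z$; and because the $T_j$ commute, $R$ commutes with each power $T_n^m$, so $R(T_n^mx)=T_n^m(Rx)=T_n^mz=z$ for every $m$. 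Thus $R$ is constant, equal to $z$, on the whole orbit $\{T_n^mx\}$. Being linear it sends every \emph{convex} combination $\sum_m\lambda_mT_n^mx$ (with $\sum_m\lambda_m=1$) to $\sum_m\lambda_mz=z$, and being continuous it sends the closed convex hull into $\overline{\{z\}}=\{z\}$. In particular $Rp=z=Rx$, so $Rq=Rx-Rp=0$, proving the claim.

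The induction hypothesis applies to $q$ because the convex-hull condition is inherited by the subfamily $T_1,\dots,T_{n-1}$ (it is assumed for all $x\in X$ and all $j$), giving a decomposition $q=q_1+\cdots+q_{n-1}$ with $q_j\in\ker(T_j-\Id)$; then $x=q_1+\cdots+q_{n-1}+p$ is the desired decomposition, with $p\in\ker(T_n-\Id)$. The one step I expect to require care is the transfer of the identity ``$R\equiv z$'' from the orbit to the entire closed convex hull: this hinges on $\sum_m\lambda_m=1$ forcing the convex combinations back to the single point $z$ (rather than into some larger affine set), together with continuity of $R$ and the standing assumption that $X$ is a Hausdorff topological vector space, so that $\{z\}$ is closed. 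Everything else is routine bookkeeping with commuting continuous operators, and notably no projection, contractivity, or direct-sum structure is needed—the fixed point $p$ does the job of $P_nx$ just well enough to strip off a single factor, which is all the induction requires.
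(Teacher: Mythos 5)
Your argument is correct, and it is essentially the argument that the paper only gestures at (it defers the proof to \cite{laczkovich/revesz:1990}, remarking that ``the crux is the same as for Proposition~\ref{p:meanerg}''). The key computation --- that $R:=(T_1-\Id)\cdots(T_{n-1}-\Id)$ is constant equal to $z:=Rx$ on the orbit $\{T_n^mx\}$ because $T_nz=z$ and $R$ commutes with $T_n$, hence by linearity and continuity $R$ collapses all of $\clconv\{T_n^mx\}$ to $z$, so that $R(x-p)=0$ --- is sound, and the Hausdorff point you flag is the only topological subtlety. Where you genuinely diverge from the model proof in the paper is in the architecture: Proposition~\ref{p:meanerg} builds the entire decomposition in one shot from the telescoping direct sum $x=P_1x+P_2(\Id-P_1)x+\cdots$, which requires the fixed points to be produced by \emph{linear, commuting, idempotent} operators $P_j$; under the hypothesis of Theorem~\ref{th:topvsp} no such projections are available (the fixed point in $\clconv\{T_j^mx\}$ is merely selected pointwise), so your induction that peels off one operator per step, using the fixed point $p$ only as a substitute for the single vector $P_nx$, is not just an alternative but the adaptation actually needed at this level of generality. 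The trade-off is that the projection version yields extra information (an explicit formula for the summands and the norm bound $2^{n-1}\|x\|$ recorded in the subsequent remark), which your existence-only induction does not provide.
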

The crux of the proof is same as for Proposition \ref{p:meanerg}.
Instead of proving this theorem (for the proof see \cite{laczkovich/revesz:1990}), we only remark that if $X=E$ is a Banach space and $T_1,\dots, T_n$ are power bounded, the fixed point condition in Theorem \ref{th:topvsp} means precisely the mean ergodicity  of $T_1,\dots,T_n$, see \cite[Theorem 8.20]{EFHN}.

\begin{corollary}\label{c:vectortopology} Let $X$ be a Banach space and let $T_1,\dots,T_n \in \LLL(X)$ be commuting power bounded operators. Suppose an additional vector topology $\tau$ is given on $E$ such that the unit ball $\Ball:=\{x\in X:\|x\|\leq 1\}$ is $\tau$-compact, and the operators $T_j$ are $\tau$-continuous. Then \eqref{eq:kereqT} holds.
\end{corollary}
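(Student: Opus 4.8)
The plan is to deduce the corollary from Theorem \ref{th:topvsp} applied to the topological vector space $(X,\tau)$ with the operators $T_1,\dots,T_n$. By hypothesis these operators are commuting, $\tau$-continuous and linear, so the only thing left to check is the fixed point condition
\begin{equation*}
\clconv\{T_j^mx:m\in\NN\}\cap\ker(T_j-\Id)\neq\emptyset\qquad(x\in X,\ j=1,\dots,n),
\end{equation*}
where $\clconv$ now denotes the $\tau$-closed convex hull. It is worth noting at the outset that the identity \eqref{eq:kereqT} is a purely algebraic statement about the fixed spaces $\ker(T_j-\Id)$ and involves no topology; hence once Theorem \ref{th:topvsp} is available for $(X,\tau)$ it delivers exactly the assertion of the corollary.

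To verify the fixed point condition, fix $x\in X$ and $j$, and put $M_j:=\sup_m\|T_j^m\|<\infty$. Consider the Ces\`aro means
\begin{equation*}
A_N:=\frac1N\sum_{m=1}^{N}T_j^mx\in\mathrm{conv}\{T_j^mx:m\in\NN\}.
\end{equation*}
Power boundedness gives $\|A_N\|\leq M_j\|x\|$, so the whole sequence $(A_N)$ lies in the set $M_j\|x\|\,\Ball$, which is $\tau$-compact as a scalar multiple of $\Ball$. By compactness $(A_N)$ has a $\tau$-cluster point $z$, i.e.\ a subnet $(A_{N_\lambda})$ with $A_{N_\lambda}\to z$ in $\tau$; and since each $A_N$ is a finite convex combination of the $T_j^mx$, we have $z\in\clconv\{T_j^mx:m\in\NN\}$.

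It remains to see that $z$ is a fixed point of $T_j$. Telescoping yields
\begin{equation*}
T_jA_N-A_N=\frac1N\bigl(T_j^{N+1}x-T_jx\bigr),\qquad\text{hence}\qquad\|T_jA_N-A_N\|\leq\frac{2M_j\|x\|}{N}\to0\quad(N\to\infty).
\end{equation*}
Along the subnet, $\tau$-continuity of $T_j$ gives $T_jA_{N_\lambda}\to T_jz$ and $A_{N_\lambda}\to z$ in $\tau$, so that $T_jA_{N_\lambda}-A_{N_\lambda}\to T_jz-z$ in $\tau$. On the other hand, for every $\varepsilon>0$ the norm estimate places $T_jA_{N_\lambda}-A_{N_\lambda}$ eventually in $\varepsilon\Ball$; since $\varepsilon\Ball$ is $\tau$-compact, hence $\tau$-closed (the topology $\tau$ being Hausdorff, as in the motivating weak and weak$^*$ cases), the $\tau$-limit obeys $T_jz-z\in\varepsilon\Ball$. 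As $\varepsilon>0$ was arbitrary, $T_jz=z$, so $z\in\clconv\{T_j^mx:m\in\NN\}\cap\ker(T_j-\Id)$, and Theorem \ref{th:topvsp} now yields \eqref{eq:kereqT}.

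The main obstacle is precisely the interplay of the two topologies: the orbit and its averages are controlled in the norm (via power boundedness), whereas compactness---and with it the existence of an averaging fixed point---is only available in the coarser topology $\tau$. The device bridging the gap is that every scalar multiple of the $\tau$-compact unit ball is $\tau$-closed, so a sequence tending to $0$ in norm whose $\tau$-limit exists must have a limit of arbitrarily small norm, forcing it to vanish. This is exactly what converts the norm-null telescoped differences $T_jA_N-A_N$ into the fixed-point equation $T_jz=z$, completing the reduction to Theorem \ref{th:topvsp}.
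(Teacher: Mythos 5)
Your proof is correct and follows the paper's overall strategy: both arguments reduce the corollary to Theorem \ref{th:topvsp} applied to the topological vector space $(X,\tau)$, and both locate the required fixed point inside the $\tau$-compact convex set $\clconv\{T_j^mx:m\in\NN\}\subseteq M_j\|x\|\Ball$. The one genuine difference is how that fixed point is produced: the paper simply invokes the Markov--Kakutani fixed point theorem on this compact convex set, whereas you re-derive by hand the special case needed here, taking a $\tau$-cluster point $z$ of the Ces\`aro means $A_N$ and forcing $T_jz-z=0$ via the telescoping norm estimate combined with the $\tau$-closedness of $\varepsilon\Ball$. Your route is more self-contained and makes explicit where linearity and power boundedness enter (they yield the $O(1/N)$ telescoping bound, which is also the engine of the abstract Markov--Kakutani proof), at the cost of a longer write-up; the subnet bookkeeping is handled correctly, including the cofinality needed to conclude $N_\lambda\to\infty$. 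Both routes tacitly require $\tau$ to be Hausdorff---you in order that $\tau$-compact sets be $\tau$-closed, the paper in order to apply Markov--Kakutani in its standard formulation---and you are right to flag this as an implicit hypothesis of the corollary (whose $X$ versus $E$ notation is in any case slightly inconsistent in the statement).
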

The proof is the application of the foregoing result and the Markov--Kakutani fixed point theorem (see, e.g., \cite[Sec.{} 10.1]{EFHN}) to the closed convex hull $\clconv\{T_j^mx:m\in \NN\}$, which was assumed to be $\tau$-compact.

The above together with the Banach--Alaoglu theorem yields the following:
\begin{proposition}\label{p:weakstar}
Let $X$ be a normed space, $E=X^*$ and let $\tau=\sigma(X^*,X)$ be the weak$^*$ topology on $E^*$. If $T_1,\dots, T_n\in \LLL(E)$ are commuting, power bounded weakly$^*$ continuous operators, then \eqref{eq:kereqT} holds.
\end{proposition}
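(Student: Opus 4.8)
The plan is to show that this is a direct specialization of Corollary~\ref{c:vectortopology}: I would simply check, one by one, that the concrete choice $\tau=\sigma(X^*,X)$ together with the standing hypotheses on $T_1,\dots,T_n$ verifies every assumption of that corollary, and then invoke its conclusion \eqref{eq:kereqT}.

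First I would record the structural facts. Since $E=X^*$ is the dual of a normed space, it is automatically a Banach space under the operator norm, so Corollary~\ref{c:vectortopology} is applicable in principle. The weak$^*$ topology $\tau=\sigma(X^*,X)$ is a (locally convex, Hausdorff) vector topology on $E$, as required. The single nontrivial topological input is the Banach--Alaoglu theorem, which asserts precisely that the closed unit ball $\Ball=\{x\in E:\|x\|\le1\}$ is $\tau$-compact. This is exactly the compactness hypothesis that Corollary~\ref{c:vectortopology} imposes on $\tau$.

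Next I would match the operator hypotheses. By assumption $T_1,\dots,T_n\in\LLL(E)$ commute, are power bounded, and are weak$^*$ continuous, which is to say $\tau$-continuous. Thus all hypotheses of Corollary~\ref{c:vectortopology} are in force with the present $\tau$, and its conclusion \eqref{eq:kereqT} follows at once.

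The ``obstacle'' here is conceptual rather than computational: one must recognize that weak$^*$ compactness of the ball is exactly the feature that feeds the Markov--Kakutani argument sitting inside Corollary~\ref{c:vectortopology}. Concretely, power boundedness confines each orbit $\{T_j^mx:m\in\NN\}$ to a scalar multiple of $\Ball$, whose $\tau$-closed convex hull is then a $\tau$-compact convex set (being a $\tau$-closed subset of a $\tau$-compact ball), and $\tau$-continuity of $T_j$ renders it $T_j$-invariant, yielding the desired fixed point. Since this verification has already been carried out in the proof of Corollary~\ref{c:vectortopology}, the only thing that remains at this level is to confirm the four hypotheses above, after which no further work is needed.
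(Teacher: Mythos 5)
Your proposal is correct and coincides with the paper's own argument: the text derives Proposition~\ref{p:weakstar} precisely by combining Corollary~\ref{c:vectortopology} with the Banach--Alaoglu theorem, which supplies the $\tau$-compactness of the unit ball. Your additional remark on how power boundedness and $\tau$-continuity feed the Markov--Kakutani fixed point argument is exactly the mechanism already contained in the proof of that corollary, so nothing further is needed.
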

\begin{definition}
Let $E$ be a Banach space, or, more generally, a topological vector space.
We say that $E$ has the \emph{decomposition property with respect to the pairwise commuting operators} $T_1,\dots, T_n\in \LLL(E)$ if \eqref{eq:kereqT} holds.
Moreover, if $\mathscr{A}\subseteq \LLL(E)$ and $E$ has the decomposition property for each system of $n$ pairwise commuting operators $T_1,\dots, T_n \in \mathscr{A}$, then $E$ is said to have the \emph{$n$-decomposition property with respect to $\mathscr{A}$}.
Finally, if this holds for all $n\in \NN$, then $E$ is said to have the \emph{decomposition  property with respect to $\mathscr{A}$}.
\end{definition}
So that e.g. Example \ref{ex:refl} means that a reflexive Banach space has the decomposition property with respect to (commuting) power bounded operators. This new terminology shall not cause any ambiguity in connection with the decomposition property of function classes $\mathscr{F}\subseteq \RR^\RR$ (in Definition \ref{d:dprall}).

\begin{remark}
If $1$ is not an eigenvalue of say $T_1$, then the questioned  equality \eqref{eq:kereqT}  reduces to
$\ker (T_2-\Id)\dots (T_n-\Id)=\ker (T_2-\Id)+\cdots+\ker (T_n-\Id)$. That is to say the order $n$ reduces to order $n-1$. In particular, if $1$ is not a spectral value for every $T_1,\dots, T_n$, then \eqref{eq:kereqT} is satisfied trivially, both sides being $\{0\}$.
\end{remark}
Note the following border-line feature of our subject matter.  It is only interesting to look at cases when  $\|T_1\|\geq1$, $\dots$, $\|T_n\|\geq 1$ (since $\Id-T$ is invertible for $\|T\|<1$). On the other hand, if $T_1,\dots, T_n$ are power bounded and commute, we can equivalently renorm $E$ by
$\nm x\nm:=\sup_{k_1,\dots, k_n\in \NN}\bigl\|T_1^{k_1}\dots T_n^{k_n}x\bigr\|,~
$
such that for the new norm each operator becomes a contraction. Hence in the end with the assumption  $\|T_1\|=\cdots =\|T_n\|=1$ one loses no generality (for the particularly fixed  power bounded operators $T_1,\dots, T_n$).

\smallskip

Recall that a Banach space $E$ is called \emph{$m$-quasi-reflexive} if  $E$ has codimension $m$ in its bidual $E^{**}$.
\begin{theorem}[V.{}M.{} Kadets, S.{}B.{} Shumyatskiy \cite{kadets/shumyatskiy:2001}]
\begin{enumerate}[a)]
\item A $1$-quasi reflexive  Banach space  $E$ has the $2$-decomposition property with respect to any
pair of commuting linear transformations $S, T$ of norm $1$.
\item  If $E$ is $m$-quasi reflexive with  $m>1$, then there exist commuting linear
transformations $S, T\in \LLL(E)$ of norm $1$ such that $E$ fails to have the
$2$-decomposition property with respect to $S, T$.
\end{enumerate}
\end{theorem}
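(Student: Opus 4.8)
My plan rests first on a purely algebraic reformulation valid for any commuting pair. Setting aside the automatic inclusion $\ker(S-\Id)(T-\Id)\supseteq\ker(S-\Id)+\ker(T-\Id)$, a vector $x$ with $(S-\Id)(T-\Id)x=0$ decomposes as $x=x_1+x_2$ with $x_1\in\ker(S-\Id)$, $x_2\in\ker(T-\Id)$ if and only if $y:=(T-\Id)x$, which automatically lies in $\ker(S-\Id)$, can be written $y=(T-\Id)x_1$ with $x_1\in\ker(S-\Id)$. Since $\{(T-\Id)x:(S-\Id)(T-\Id)x=0\}=\ran(T-\Id)\cap\ker(S-\Id)$, the $2$-decomposition property with respect to $S,T$ is \emph{equivalent} to
\[
(T-\Id)\bigl(\ker(S-\Id)\bigr)=\ran(T-\Id)\cap\ker(S-\Id),
\]
the inclusion $\subseteq$ here being automatic. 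Every $y$ in the right-hand side has uniformly bounded partial sums, $\|\sum_{k=0}^{N-1}T^k y\|=\|(T^N-\Id)x\|\le 2\|x\|$, so the whole matter is an ergodic-range question.

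For part (a) the plan is to settle this range question in the bidual, where Proposition~\ref{p:weakstar} is available, and then to descend. Let $J\colon E\to E^{**}$ be the canonical embedding. The biadjoints $S^{**},T^{**}$ commute, are $\sigma(E^{**},E^*)$-continuous, have norm one, and satisfy $S^{**}J=JS$, $T^{**}J=JT$, hence leave $J(E)$ invariant. As $E^{**}=(E^*)^*$ has weak$^*$-compact unit ball, Proposition~\ref{p:weakstar} gives the $2$-decomposition for $S^{**},T^{**}$; applying it to $Jx$ yields
\[
Jx=\xi+\eta,\qquad \xi\in\ker(S^{**}-\Id),\quad\eta\in\ker(T^{**}-\Id).
\]
Everything reduces to replacing $\xi,\eta$ by vectors of $J(E)$, the admissible corrections being $\xi\mapsto\xi-e$, $\eta\mapsto\eta+e$ with $e\in G:=\ker(S^{**}-\Id)\cap\ker(T^{**}-\Id)$.

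The obstruction lives in the $m$-dimensional quotient $F:=E^{**}/J(E)$, on which $S^{**},T^{**}$ induce commuting contractions $\bar S,\bar T$. Writing $q$ for the quotient map, $q(\xi)=-q(\eta)$ lies in $F_0:=\ker(\bar S-\Id)\cap\ker(\bar T-\Id)$, and the descent succeeds as soon as $q\colon G\to F_0$ is surjective. When $m=1$ the line $F$ forces a dichotomy: either $F_0=\{0\}$, whence $q(\xi)=0$ and already $\xi,\eta\in J(E)$; or $\bar S=\bar T=\Id$, i.e. $\ran(S^{**}-\Id),\ran(T^{**}-\Id)\subseteq J(E)$. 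In the second case, picking $\Psi\in E^{**}\setminus J(E)$ we have $(S^{**}-\Id)\Psi=Ja$ and $(T^{**}-\Id)\Psi=Jb$ with $a,b\in E$ satisfying the compatibility $(S-\Id)b=(T-\Id)a$ (from commutativity) and bounded partial sums (from $\|(S^{**})^N\Psi\|\le\|\Psi\|$ and its $T$-analogue); a solution $c\in E$ of the system $(S-\Id)c=a$, $(T-\Id)c=b$ then supplies the missing common fixed vector $\Psi-Jc\in G\setminus J(E)$. I expect the solvability of this system to be the \emph{crux}: codimension one is exactly what prevents compatible, partial-sum-bounded data $(a,b)$ from carrying a genuine obstruction, so that $c$ exists and $q|_G$ is onto.

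For part (b) I would run this mechanism in reverse. By the reformulation of the first paragraph it suffices, given an $m$-quasi-reflexive $E$ with $m\ge 2$, to produce commuting norm-one operators $S,T$ with
\[
(T-\Id)\bigl(\ker(S-\Id)\bigr)\subsetneq\ran(T-\Id)\cap\ker(S-\Id).
\]
The plan is to exploit the two independent directions now present in $F$: choose $\Psi_0,\Psi_1\in E^{**}$ with $q(\Psi_0),q(\Psi_1)$ linearly independent, and design $S,T$ so that $\bar S,\bar T$ share the fixed direction $q(\Psi_0)$ (placing $q(\Psi_0)$ in $F_0$) while the second direction encodes compatible data $(a,b)$ for which $(S-\Id)c=a$, $(T-\Id)c=b$ has no solution in $E$. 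Then $q(\Psi_0)\in F_0\setminus q(G)$, the descent fails, and unravelling the equivalences produces a $y\in\ran(T-\Id)\cap\ker(S-\Id)$ outside $(T-\Id)(\ker(S-\Id))$. Concretely I would realise such $S,T$ as shift-type contractions on a James-type renorming adapted to the chosen directions; the main difficulty is arranging commutativity and norm one simultaneously with the unsolvable-yet-compatible system, which is precisely the flexibility that order-two quasi-reflexivity permits and order one forbids.
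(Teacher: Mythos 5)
The paper does not prove this theorem: it states it and refers to \cite{kadets/shumyatskiy:2001} (and to \cite{KadetsAve} for the surrounding averaging techniques), so there is no in-house argument to compare you against. Judged on its own terms, your frame is sound. The reformulation of the $2$-decomposition property as $(T-\Id)\bigl(\ker(S-\Id)\bigr)=\ran(T-\Id)\cap\ker(S-\Id)$ is correct, the passage to the bidual via Proposition \ref{p:weakstar} is the right soft step, and you have correctly located the obstruction in the quotient $E^{**}/J(E)$ together with the dichotomy (for $m=1$) between $F_0=\{0\}$ and $\bar S=\bar T=\Id$.

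But in the second branch of that dichotomy your argument stops exactly where the theorem begins. The solvability in $E$ of the compatible, partial-sum-bounded system $(S-\Id)c=a$, $(T-\Id)c=b$ is not proved, only ``expected''; and observe that solving even the single equation $(T-\Id)c=b$ with $c\in\ker(S-\Id)$ is a problem of precisely the type you started from --- you know $Jb\in\ran(T^{**}-\Id)$ with a bounded preimage, not that $b\in\ran(T-\Id)$ --- so the reduction is dangerously close to circular. That bounded partial sums plus compatibility suffice in codimension one is the entire content of part a); in general Banach spaces this implication fails (this is the Bohl--Bohr--Kadets phenomenon behind Theorem \ref{t:cochar} and behind the $\co$ and $\ell^1$ counterexamples), so a genuine argument exploiting $\dim E^{**}/J(E)=1$ is indispensable and is missing. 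For part b) you supply only a design brief: no commuting pair $(S,T)$ is actually constructed, and the proposed ``James-type renorming'' is at odds with the statement, which requires operators of norm one in the \emph{given} norm of an \emph{arbitrary} $m$-quasi-reflexive $E$; you may neither replace the norm nor assume a shift structure. Both halves therefore have a genuine gap at their respective cruxes.
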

Also Kadets and Shumyatskiy proved the following:

\begin{theorem}[V.{}M.{} Kadets, S.{}B.{} Shumyatskiy \cite{KadetsAve}]
Neither the space $\co$ of null sequences, nor $\ell^{1}$ has the $2$-decomposition property with respect to operators of norm $1$.
\end{theorem}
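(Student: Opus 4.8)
The plan is to construct, separately for $E=\co$ and $E=\ell^1$, a pair of commuting operators $S,T\in\LLL(E)$ with $\|S\|=\|T\|=1$ and a vector lying in $\ker(S-\Id)(T-\Id)$ but in no sum $\ker(S-\Id)+\ker(T-\Id)$. The first step is a reformulation that isolates the target. If $x\in\ker(S-\Id)(T-\Id)$ and $b:=(T-\Id)x$, then $(S-\Id)b=0$, so $b\in\ker(S-\Id)$; and a decomposition $x=x_1+x_2$ with $x_1\in\ker(S-\Id)$, $x_2\in\ker(T-\Id)$ exists if and only if $(T-\Id)x_1=b$ for some $x_1\in\ker(S-\Id)$, i.e.\ if and only if $b\in(T-\Id)[\ker(S-\Id)]$. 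Hence the whole task reduces to producing commuting contractions $S,T$ and a vector
\begin{equation*}
b\in\ker(S-\Id)\cap\ran(T-\Id)\quad\text{with}\quad b\notin(T-\Id)[\ker(S-\Id)].
\end{equation*}
Such a $b$ is an obstruction of mean-ergodic type, available precisely because $\co$ and $\ell^1$ are non-reflexive: by Proposition~\ref{p:meanerg} it cannot occur when both $S$ and $T$ are mean ergodic, and by Example~\ref{ex:refl} it cannot occur on a reflexive space at all.

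For $E=\co$, realised as $\co\oplus\co$ with the supremum norm (a copy $F$ and a copy $G$, vectors $(f,g)$), I would take $u_n:=1/(n+1)$ and, in block-triangular form,
\begin{equation*}
S(f,g):=(f,Cg),\quad Cg:=(g_1,g_1,g_2,g_3,\dots),\qquad T(f,g):=(\alpha f+g_1u,\;g),
\end{equation*}
where $(\alpha f)_n:=\tfrac{n}{n+1}f_{n+1}$. Since $\ker(C-\Id)=\{0\}$ we get $\ker(S-\Id)=F$; taking $x:=(0,e_1)$ with $e_1\in G$ one computes $b:=(T-\Id)x=(u,0)\in F=\ker(S-\Id)$, while $(T-\Id)[F]=\ran(\alpha-\Id)\times\{0\}$. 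Thus the decomposition of $x$ fails exactly when $u\notin\ran(\alpha-\Id)$.

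The three verifications are clean. Commutativity holds because $(Cg)_1=g_1$ gives $ST(f,g)=(\alpha f+g_1u,Cg)=TS(f,g)$. The norm equals $1$ because every output coordinate of $T$ is a convex combination, $|(\alpha f+g_1u)_n|\le\tfrac{n}{n+1}\|f\|_\infty+\tfrac1{n+1}|g_1|\le\max(\|f\|_\infty,\|g\|_\infty)$, and $\|C\|=1$. Finally $u\notin\ran(\alpha-\Id)$: the equation $(\alpha-\Id)f=u$ reads $\tfrac{n}{n+1}f_{n+1}-f_n=\tfrac1{n+1}$ and is solved by the constant $f\equiv-1$, while the homogeneous solutions are $h_n=h_1\cdot n$; so the solutions are exactly $f_n=-1+h_1n$, none of which lies in $\co$. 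This proves the statement for $\co$. That no simpler, additive mechanism can work is explained by the averaging argument of Lemma~\ref{l:commensurable}: a norm-one unipotent $\Id+N$ with $N^2=0$ would be a contraction, whence $(\Id+N)^kx=x+kNx$ with $\|(\Id+N)^kx\|\le\|x\|$ forces $N=0$; the operators must therefore be of substochastic (shift/averaging) type, drawing their norm from row normalisation rather than from a shear.

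For $E=\ell^1$ I would dualise this block pattern, now with column-substochastic matrices: take $S=\Id\oplus C$ with $C$ the right shift on $\ell^1$ (so $\ker(C-\Id)=0$ while $\ran(C-\Id)\subseteq\{\sum_ig_i=0\}$ is non-dense), let the coupling be $\beta g=(\textstyle\sum_i\phi_ig_i)\,u$ for a bounded fixed covector $\phi$ of $C^{*}$, and witness $u\notin\ran(\alpha-\Id)$ by a non-$\ell^1$ solution of $(\alpha-\Id)f=u$ (e.g.\ $f\equiv-1$ for $u=e_1$ and $\alpha$ a weighted right shift). These operators are deliberately \emph{not} weak${}^{*}$ continuous, as they must be, since by Proposition~\ref{p:weakstar} weak${}^{*}$ continuous power-bounded operators on $\ell^1=\co^{*}$ already satisfy \eqref{eq:kereqT}. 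The genuinely delicate point, and the main obstacle throughout, is keeping the norm equal to $1$: the coupling inflates the column of the fixed vector $g_0$, and balancing each column sum to $1$ forces $g_0$ to carry sign changes (so that $\gamma g_0=g_0$ is compatible with column deficiency) and the weights of $\alpha$ and $\gamma$ to absorb exactly the coupling mass. It is this reconciliation of norm one with a nonzero obstruction---which renorming would trivialise but is forbidden here, as it would leave the isometric class of $\co$ and $\ell^1$---that the non-strict-convexity of these spaces makes possible.
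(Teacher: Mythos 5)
The paper gives no proof of this theorem---it defers entirely to \cite{KadetsAve}---so there is nothing internal to compare against and your proposal must stand on its own. Your reduction (a decomposition of $x$ exists iff $b:=(T-\Id)x$ lies in $(T-\Id)[\ker(S-\Id)]$) is correct, and the $\co$ half is a complete, verifiable construction: $\co\oplus_\infty\co$ is isometric to $\co$, commutativity follows from $(Cg)_1=g_1$, the convex-combination estimate gives $\|T\|=1$, and the recursion $\tfrac{n}{n+1}f_{n+1}-f_n=\tfrac1{n+1}$ has general solution $f_n=-1+cn$, none of which is a null sequence. That half I accept.

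The $\ell^1$ half, however, is not merely unfinished: the route you describe provably cannot produce a counterexample. Work on $E=\ell^1\oplus_1\ell^1$, take $S=\Id\oplus C$ with $\ker(C-\Id)=\{0\}$ (your right shift), and any upper-triangular $T(f,g)=(\alpha f+\beta g,\gamma g)$ with $\|T\|\le 1$. Testing $T$ on $(0,e_j)$ gives $\|\beta e_j\|_1+\|\gamma e_j\|_1\le 1$ for every $j$. If now $\gamma g_0=g_0$ with $g_0\neq 0$, then
\begin{equation*}
\|g_0\|_1=\|\gamma g_0\|_1\le\sum_j|(g_0)_j|\,\|\gamma e_j\|_1\le\sum_j|(g_0)_j|\bigl(1-\|\beta e_j\|_1\bigr)=\|g_0\|_1-\sum_j|(g_0)_j|\,\|\beta e_j\|_1,
\end{equation*}
so $\beta e_j=0$ on the entire support of $g_0$ and hence $\beta g_0=0$. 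No amount of ``sign changes'' in $g_0$ evades this: it is the triangle inequality, and the column deficiency you need to make room for the coupling is exactly what kills every fixed vector of $\gamma$ that the coupling could see. Since $\ker(C-\Id)=\{0\}$, every $(f,g)\in\ker\bigl((S-\Id)(T-\Id)\bigr)$ satisfies $\gamma g=g$, hence $\beta g=0$, hence $(0,g)\in\ker(T-\Id)$ and $(f,g)=(f,0)+(0,g)$ always decomposes. The same computation disposes of the lower-triangular variant, and literally dualising your $\co$ example fails too: there one finds $\ker(T^*-\Id)=\{0\}\oplus\ell^1$ and $\ker(S^*-\Id)=\ell^1\oplus\RR e_1$, whose sum is all of $\ell^1\oplus_1\ell^1$. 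So the $\ell^1$ counterexample must be structurally different from the pattern you propose, and as it stands half of the theorem is unproved.
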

See \cite{KadetsAve} for the proofs and for further information on averaging techniques which can be used in connection with the periodic decomposition problem. Several natural questions arise, see \cite{kadets/shumyatskiy:2001}:
\begin{problem}\label{p:quasiref}
\begin{enumerate}[1.]
\item Is it true that in a $1$-quasi reflexive space $E$ has the decomposition property with respect to any finite system of commuting operators of norm $1$?
\item Does the  $2$-decomposition property with respect to contractions imply the $n$-decomposition property with respect to contractions?
\item Does the $2$-decomposition property with respect to power bounded operators characterizes $m$-quasi reflexive Banach spaces with $m\leq 1$?
\end{enumerate}
\end{problem}
Let us finally remark that a recent result of V.P.{} Fonf, M.{} Lin and P.{} Wojtaszcyk \cite{fonf} states that a separable $1$-quasi reflexive space can be equivalently renormed such that every contraction with respect to the new norm becomes mean ergodic. Also a classical result of theirs, see \cite{Fonf0}, is that a Banach space $E$ is reflexive if (and only if) every power bounded operator is mean ergodic. These indicate the possible difficulty of Problem \ref{p:quasiref}.
\subsubsection{Applications to $\Ell^p$ spaces}
We first discuss immediate consequences of the previous operator-theoretic results.
Let $(X,\Sigma,\mu)$ be a a measure space. In this section our standing assumption is as follows:

\begin{condition}\label{con:measless}
For $j=1,\dots,n$ let $T_j:X\to X $ be pairwise commuting measurable mappings such that
$\mu(T_j^{-1}(A))\leq \mu(A)$ for every $A\in \Sigma$.
\end{condition}
Then the Koopman operators, denoted by the same letter and defined by
\begin{equation*}
T_jf:=f\circ T_j
\end{equation*}
are contractions on all of the spaces $\Ell^p(X,\Sigma,\mu)$. In particular the condition above is fulfilled if the $T_j$ are measure-preserving, in which case the Koopman operators $T_j$ become isometries on each of the $\Ell^p$ spaces.

For the reflexive range the next corollary of Proposition \ref{p:meanerg} is immediate:
\begin{corollary}
Let $1<p<\infty$.
Under Condition \ref{con:measless} consider the Koopman operators $T_j$ on $\Ell^p(X,\Sigma,\mu)$. Then \eqref{eq:kereqT} holds true.
\end{corollary}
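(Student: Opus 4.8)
The plan is to reduce the claim directly to Proposition \ref{p:meanerg} by checking that its hypotheses are met for the Koopman operators $T_1,\dots,T_n$ on $\Ell^p(X,\Sigma,\mu)$ with $1<p<\infty$. That proposition asserts the desired equality \eqref{eq:kereqT} for any system of \emph{commuting mean ergodic} operators on a Banach space, so it suffices to establish commutativity and mean ergodicity of the $T_j$.

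First I would verify that the Koopman operators commute. Since $T_if=f\circ T_i$, one computes $T_iT_jf=f\circ(T_j\circ T_i)$ and $T_jT_if=f\circ(T_i\circ T_j)$; because the underlying measurable maps $T_i,T_j:X\to X$ commute by Condition \ref{con:measless}, the associated Koopman operators commute as well (note the order reversal under composition). Next, by the discussion preceding the corollary, Condition \ref{con:measless} guarantees that each $T_j$ is a contraction on $\Ell^p(X,\Sigma,\mu)$, hence in particular power bounded.

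The decisive step is to produce mean ergodicity, and here I would invoke reflexivity. For $1<p<\infty$ the space $\Ell^p(X,\Sigma,\mu)$ is reflexive, so by the classical fact recalled in Example \ref{ex:refl} every power bounded operator on it is mean ergodic. Thus each $T_j$ is mean ergodic, and $T_1,\dots,T_n$ form a commuting system of mean ergodic operators on a Banach space. Applying Proposition \ref{p:meanerg} then yields \eqref{eq:kereqT}, completing the argument.

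Since every ingredient --- reflexivity of $\Ell^p$ for $1<p<\infty$, the contraction property from Condition \ref{con:measless}, commutativity, and the mean ergodic theorem for reflexive spaces --- is either standard or already established in the excerpt, I do not anticipate any genuine obstacle; the content of the corollary is precisely that these pieces combine. The only point requiring a moment's care is the passage of commutativity from the point transformations to the Koopman operators, which the order-reversing composition above settles.
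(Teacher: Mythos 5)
Your argument is correct and is precisely the paper's intended proof: the text introduces the corollary with ``for the reflexive range the next corollary of Proposition \ref{p:meanerg} is immediate,'' relying exactly on the contraction property from Condition \ref{con:measless}, reflexivity of $\Ell^p$ for $1<p<\infty$, and the classical mean ergodic theorem recalled in Example \ref{ex:refl}. Your additional care about commutativity of the Koopman operators under order-reversing composition is a correct and welcome detail that the paper leaves implicit.
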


The same result is true for the case $p=1$, but the proof is different since infinite dimensional $\Ell^1$ spaces are non-reflexive.
We remark however that if $(X,\Sigma,\mu)$ is finite, then the Koopman operators $T_j$ are simultaneous $\Ell^1$ and $\Ell^\infty$ contractions, so-called Dunford--Schwartz operators, that are known to be mean ergodic on $\Ell^1$, see, e.g.,  \cite[Sec.{} 8.4]{EFHN}.
\begin{proposition}[M.{} Laczkovich, Sz.{} R\'ev\'esz \cite{laczkovich/revesz:1990}]
Under Condition \ref{con:measless} consider the Koopman operators $T_j$ on $\Ell^1(X,\Sigma,\mu)$. Then \eqref{eq:kereqT} holds true.
\end{proposition}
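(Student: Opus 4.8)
The plan is to reduce matters to the weak$^*$ setting of Proposition~\ref{p:weakstar} by passing to the bidual. First observe that under Condition~\ref{con:measless} each $T_j$ is simultaneously a contraction on $\Ell^1$ and on $\Ell^\infty$ (a Dunford--Schwartz operator): the inequality $\mu(T_j^{-1}A)\le\mu(A)$ gives $\|T_j\|_{\Ell^1\to\Ell^1}\le1$, while composition never increases the sup-norm. The difficulty is that on an infinite measure space the $T_j$ need not be mean ergodic on $\Ell^1$ (already the shift on $\ell^1(\ZZ)$ fails to be), so Proposition~\ref{p:meanerg} does not apply directly. I would therefore work in $(\Ell^1)^{**}=(\Ell^\infty)^*$ with the biadjoints $T_j^{**}$. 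These commute, are contractions, restrict to $T_j$ on the canonically embedded copy of $\Ell^1$, and---being adjoints of the operators $T_j^*$ on $\Ell^\infty$---are $\sigma((\Ell^\infty)^*,\Ell^\infty)$-continuous.

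Consequently Proposition~\ref{p:weakstar}, applied with $X=\Ell^\infty$ and $E=(\Ell^\infty)^*=(\Ell^1)^{**}$, shows that \eqref{eq:kereqT} holds for the system $T_1^{**},\dots,T_n^{**}$. Now let $f\in\Ell^1$ satisfy $(T_1-\Id)\cdots(T_n-\Id)f=0$ and view it inside $(\Ell^1)^{**}$; since $T_j^{**}$ extends $T_j$ we also have $(T_1^{**}-\Id)\cdots(T_n^{**}-\Id)f=0$, whence the bidual decomposition yields
\[
f=\xi_1+\cdots+\xi_n,\qquad \xi_j\in\ker(T_j^{**}-\Id)\subseteq(\Ell^1)^{**}.
\]

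To finish I would push the components back into $\Ell^1$. Since $\Ell^1$ is an abstract $\Ell$-space it is $\Ell$-embedded in its bidual: one has a contractive band projection $\pi\colon(\Ell^1)^{**}\to\Ell^1$ associated with the splitting $(\Ell^1)^{**}=\Ell^1\oplus_1(\Ell^1)_s$ into its order-continuous (``normal'') and singular (``purely finitely additive'') parts, and $\pi$ fixes $\Ell^1$ pointwise. Granting that $\pi$ commutes with every $T_j^{**}$, applying $\pi$ to the displayed identity gives $f=\pi f=\pi\xi_1+\cdots+\pi\xi_n$ with $\pi\xi_j\in\Ell^1$ and $T_j(\pi\xi_j)=T_j^{**}\pi\xi_j=\pi T_j^{**}\xi_j=\pi\xi_j$, that is $\pi\xi_j\in\ker(T_j-\Id)$. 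This is exactly the required $\Ell^1$-decomposition, and hence \eqref{eq:kereqT}.

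The crux---and the step I expect to be the real obstacle---is this commutation, equivalently the invariance $T_j^{**}\bigl((\Ell^1)_s\bigr)\subseteq(\Ell^1)_s$ of the singular part. It is genuinely false for arbitrary contractions: the positive $\ell^1$-contraction $v\mapsto\bigl(\sum_k v_k\bigr)e_1$ (which is \emph{not} an $\Ell^\infty$-contraction) has a biadjoint carrying certain singular functionals into $\Ell^1$. What rescues the argument is precisely the Dunford--Schwartz property: being also an $\Ell^\infty$-contraction forces the adjoint $T_j^*$ to be an order-continuous (weak$^*$-continuous) positive contraction of $\Ell^\infty$, and for such a map precomposition preserves normality of functionals and, with a little more work, singularity as well. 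I would deduce the invariance from this order-continuity---$g_\alpha\uparrow g$ in $\Ell^\infty$ forcing $T_j^*g_\alpha\uparrow T_j^*g$, which propagates to $T_j^{**}$ and constrains the normal part of $T_j^{**}\phi$ for singular $\phi$---together with positivity and the mass relation $\|T_j^{**}\phi\|=\langle\phi,T_j^*\mathbf 1\rangle$. This is where the measure-theoretic content genuinely enters, beyond the abstract operator framework used for the reflexive and weak$^*$ cases.
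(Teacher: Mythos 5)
Your reduction to the bidual is sound as far as it goes: the biadjoints $T_j^{**}$ are commuting, weak$^*$-continuous contractions of $(\Ell^1)^{**}$, so Proposition~\ref{p:weakstar} does produce a decomposition $f=\xi_1+\cdots+\xi_n$ with $\xi_j\in\ker(T_j^{**}-\Id)$. (Minor caveat: for a non-localizable measure space you should take $X=(\Ell^1)^*$ rather than $\Ell^\infty$, which costs nothing.) Note that this route is genuinely different from the one the paper points to: the paper does not pass to the bidual at all, but replaces mean ergodicity by Birkhoff's pointwise ergodic theorem --- the Cesàro averages of these positive $\Ell^1$-contractions converge almost everywhere even when they fail to converge in norm, and the a.e.\ limits play the role of the projections $P_j$ in the proof of Proposition~\ref{p:meanerg}; see \cite{laczkovich/revesz:1990}.

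The gap is exactly where you place it, but your proposed repair does not close it. You want the invariance $T_j^{**}\bigl((\Ell^1)_s\bigr)\subseteq(\Ell^1)_s$ and propose to deduce it from $T_j^*$ being an order-continuous positive contraction of $\Ell^\infty$. That hypothesis is demonstrably insufficient: your own counterexample $Tv=\bigl(\sum_kv_k\bigr)e_1$ has adjoint $T^*g=g(1)\mathbf{1}$, which \emph{is} a positive, order-continuous (indeed weak$^*$-continuous, being an adjoint) contraction of $\ell^\infty$, and yet $T^{**}$ sends every Banach limit to $e_1\in\ell^1$. So order continuity, positivity and contractivity of the \emph{adjoint} cannot yield preservation of singularity; moreover, weak$^*$-continuity of $T_j^*$ holds for the adjoint of \emph{any} $\Ell^1$-contraction, so it carries no information coming from Condition~\ref{con:measless}. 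What the Dunford--Schwartz property actually provides is a contractive extension of the Koopman operator $T_j$ itself to $\Ell^\infty$, which is a different operator from the adjoint $T_j^*$ (a transfer operator); you have not extracted from it, nor from the lattice-homomorphism identity $|f\circ\tau|=|f|\circ\tau$, any statement that forces the singular part to be invariant. Until that invariance (or a substitute, e.g.\ a minimal-norm selection among all admissible bidual decompositions exploiting the $\oplus_1$-structure) is genuinely proved, the descent from $(\Ell^1)^{**}$ to $\Ell^1$ is unjustified and the proof is incomplete at its acknowledged crux.
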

We do not give the proof here, but note that the mean ergodicity of the operators  can be replaced by an application of Birkhoff's pointwise ergodic theorem, see, e.g.{} \cite[Ch.{} 11]{EFHN}. See \cite{laczkovich/revesz:1990} for the detailed proof.

The case of $p=\infty$ is more subtle. Let us recall the following notion.
\begin{definition}
A measure space $(X,\Sigma,\mu)$ is called \emph{localizable} if the dual of the Banach space $\Ell^1(X,\Sigma,\mu)$ is $\Ell^\infty(X,\Sigma,\mu)$ (with the usual identification).
\end{definition}
As a matter of fact, the original definition of Segal (see \cite[Sec.{} 5]{segal}) was different, but is equivalent to the  one above. Known examples of localizable measure spaces include:
\begin{example}\label{exa:loc}
\begin{enumerate}[1.]
\item $\sigma$-finite measure spaces,
\item $(X,\Sigma,\mu)$ with $X$ a set $\Sigma=\pow(X)$ the power set, $\mu$ the counting measure,
\item $(X,\Sigma,\mu)$ purely atomic,
\item $(X,\Sigma,\mu)$, $X$ a locally compact  group, $\Sigma$ the Baire algebra, $\mu$ a (left/right) Haar measure.
\end{enumerate}
\end{example}
Hence, in all of these cases the results below apply. In particular if one considers commuting left- (or right) translations on some locally compact group $G$, then the respective Koopman operators will satisfy \eqref{eq:kereqT}. Note that the left and the right Haar measures are absolutely continuous with respect to each other, so we can fix any of them for our considerations below.

\begin{theorem}[M.{} Laczkovich, Sz.{} R\'ev\'esz \cite{laczkovich/revesz:1990}]\label{thm:loca}
Let $(X,\Sigma,\mu)$ be a localizable measure space, and suppose that for the pairwise commuting measurable mappings $T_j:X\to X$ ($j=1,\dots,n$) the push-forward measures $\mu\circ T^{-1}$ are all absolutely continuous with respect to $\mu$. Then for the Koopman operators $T_j$ on $\Ell^\infty(X,\Sigma,\mu)$ \eqref{eq:kereqT} holds true.
\end{theorem}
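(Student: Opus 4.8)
The plan is to realise $\Ell^\infty(X,\Sigma,\mu)$ as a dual Banach space and then invoke Proposition \ref{p:weakstar}. By the definition of localizability we have the isometric identification $\Ell^\infty(X,\Sigma,\mu)=(\Ell^1(X,\Sigma,\mu))^*$, so with $X:=\Ell^1$, $E:=X^*=\Ell^\infty$ and $\tau:=\sigma(\Ell^\infty,\Ell^1)$ the weak$^*$ topology, it suffices to check that the Koopman operators $T_j$, $T_jf=f\circ T_j$, form a commuting family of power bounded, weak$^*$ continuous operators on $E$. Proposition \ref{p:weakstar} then yields \eqref{eq:kereqT} at once.

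First I would verify that each $T_j$ is a well-defined contraction of $\Ell^\infty$. The only hypothesis now available is that the push-forward $\mu\circ T_j^{-1}$ is absolutely continuous with respect to $\mu$, i.e. $\mu(A)=0$ implies $\mu(T_j^{-1}(A))=0$. This makes $f\mapsto f\circ T_j$ well-defined on $\mu$-equivalence classes, and since $\{|f\circ T_j|>M\}=T_j^{-1}\{|f|>M\}$, taking $M=\|f\|_\infty$ gives $\|f\circ T_j\|_\infty\le\|f\|_\infty$. (Note that for $\Ell^\infty$ mere absolute continuity suffices; the stronger measure-decreasing Condition \ref{con:measless} is only needed to obtain contractivity on the spaces $\Ell^p$ with $p<\infty$.) Since the point maps $T_j$ pairwise commute, so do the composition operators, and each is power bounded, being a contraction.

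The crux is weak$^*$ continuity, and this is where localizability really enters. An operator on a dual space is weak$^*$ continuous precisely when it is the adjoint of a bounded operator on the predual, so I would construct the predual \emph{transfer operator} $S_j$ on $\Ell^1$ with $T_j=S_j^*$. Given $g\in\Ell^1(\mu)$, consider the finite signed measure obtained by pushing $g\,\dd\mu$ forward,
\[
\lambda_g(A):=\int_{T_j^{-1}(A)}g\,\dd\mu\qquad(A\in\Sigma),
\]
which satisfies $|\lambda_g|(X)\le\|g\|_1<\infty$ and, by the absolute continuity of $\mu\circ T_j^{-1}$, obeys $\lambda_g\ll\mu$. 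The Radon--Nikodym theorem for localizable measure spaces then provides a density $S_jg:=\dd\lambda_g/\dd\mu\in\Ell^1(\mu)$, with $\|S_jg\|_1=|\lambda_g|(X)\le\|g\|_1$, so $S_j\in\LLL(\Ell^1)$ is a contraction. By construction, for every simple $f$ and hence, by uniform approximation, for every $f\in\Ell^\infty$,
\[
\int_X (f\circ T_j)\,g\,\dd\mu=\int_X f\,(S_jg)\,\dd\mu,
\]
that is $T_j=S_j^*$. Consequently each $T_j$ is weak$^*$ continuous.

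With all hypotheses of Proposition \ref{p:weakstar} verified---$T_1,\dots,T_n$ commuting, power bounded and weak$^*$ continuous on $E=(\Ell^1)^*=\Ell^\infty$---the equality \eqref{eq:kereqT} follows. I expect the genuinely delicate step to be the construction of $S_j$: the absolute continuity $\lambda_g\ll\mu$ is immediate, but extracting the $\Ell^1$ density requires the Radon--Nikodym theorem beyond the $\sigma$-finite case, and it is exactly the localizability of $(X,\Sigma,\mu)$ (equivalently, the duality $(\Ell^1)^*=\Ell^\infty$) that guarantees this density exists. Everything else---well-definedness, the contraction bounds, commutativity, and the adjoint identity on simple functions---is routine.
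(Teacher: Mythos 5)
Your proposal follows exactly the route the paper indicates for this theorem: use localizability to identify $\Ell^\infty(X,\Sigma,\mu)$ with the dual of $\Ell^1(X,\Sigma,\mu)$, show the Koopman operators are weak$^*$ continuous (which you do by exhibiting a pre-adjoint transfer operator via the Radon--Nikodym theorem), and then apply Proposition \ref{p:weakstar}. The paper only sketches this argument and refers to \cite{laczkovich/revesz:1990} for details; your write-up supplies those details along the same lines, correctly isolating the construction of the predual operator as the step where localizability is genuinely needed.
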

The proof relies on the fact that under the conditions of localizability of $(X,\Sigma,\mu)$ and absolute continuity of the push-forward measures, the operators $T_j$ will be weak$^*$ continuous on $\Ell^\infty(X,\Sigma,\mu)$ hence one can apply Proposition \ref{p:weakstar}. For the details see \cite{laczkovich/revesz:1990}.
\begin{problem}
Can one drop the localizability assumption?
\end{problem}
\begin{corollary}[Z.{} Gajda \cite{gajda:1992}, M.{} Laczkovich, Sz.{} R\'ev\'esz \cite{laczkovich/revesz:1990}]
The space $\Bb(X)$ of bounded functions on a set $X$ has the decomposition property with respect to any system of commuting Koopman operators.
\end{corollary}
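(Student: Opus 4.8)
The plan is to deduce the corollary from Theorem \ref{thm:loca} by choosing the counting measure. Concretely, I would equip the underlying set $X$ with the $\sigma$-algebra $\Sigma:=\pow(X)$ and the counting measure $\mu$. By Example \ref{exa:loc}, item~2, this measure space is localizable, so the structural hypothesis of Theorem \ref{thm:loca} is in force, and it remains only to identify the function space and to verify the conditions on the mappings.

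First I would record the identification $\Ell^\infty(X,\Sigma,\mu)=\Bb(X)$, both as sets and isometrically. Since $\Sigma$ is the full power set, every bounded function on $X$ is measurable; and since the only $\mu$-null set is $\emptyset$, the essential supremum coincides with the ordinary supremum, $\|f\|_{\Ell^\infty}=\sup_{x\in X}|f(x)|$. In particular no quotient by null functions is involved, so $\mu$-a.e.\ equality is genuine pointwise equality and the Koopman operators $f\mapsto f\circ T_j$ act on $\Ell^\infty(X,\Sigma,\mu)$ exactly as on $\Bb(X)$.

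Next I would verify the two conditions that Theorem \ref{thm:loca} imposes on the mappings. Measurability of each $T_j:X\to X$ is automatic because $\Sigma=\pow(X)$. Absolute continuity of the push-forward measures is equally immediate: if $\mu(A)=0$ then $A=\emptyset$, whence $T_j^{-1}(A)=\emptyset$ and $(\mu\circ T_j^{-1})(A)=0$, so $\mu\circ T_j^{-1}\ll\mu$ trivially. It remains to match the commutativity hypotheses: a system of Koopman operators $f\mapsto f\circ T_j$ on $\Bb(X)$ commutes if and only if the underlying maps commute, since applying the identity $f\circ(T_j\circ T_i)=f\circ(T_i\circ T_j)$ to the indicator functions $\mathbf 1_{\{y\}}$ ($y\in X$) forces $T_j\circ T_i=T_i\circ T_j$ pointwise, the converse being clear.

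With all hypotheses in place, Theorem \ref{thm:loca} yields the equality \eqref{eq:kereqT} for the Koopman operators $T_j$ on $\Ell^\infty(X,\Sigma,\mu)=\Bb(X)$, which is precisely the claimed decomposition property. I expect no genuine analytic obstacle here; the entire content lies in the choice of measure. The one point deserving care---and the step I would be least cavalier about---is the pair of facts that counting measure renders $(X,\pow(X),\mu)$ localizable and that the resulting $\Ell^\infty$ coincides exactly with $\Bb(X)$, since it is this coincidence that lets the abstract $\Ell^\infty$-statement of Theorem \ref{thm:loca} be read as an assertion about all bounded functions on $X$.
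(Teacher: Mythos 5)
Your proposal is correct and follows exactly the paper's own route: the authors likewise deduce the corollary from Theorem \ref{thm:loca} applied to $(X,\pow(X),\mu)$ with $\mu$ the counting measure, invoking Example \ref{exa:loc}.2 for localizability. Your verifications of the identification $\Ell^\infty(X,\pow(X),\mu)=\Bb(X)$, of the absolute continuity of the push-forward measures, and of the correspondence between commuting Koopman operators and commuting underlying maps are all sound and merely make explicit what the paper leaves implicit.
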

This follows from Theorem \ref{thm:loca} and from  Example \ref{exa:loc}.2 above. The proof of Gajda uses Banach limits, see also \S\ref{sec:sgrpact} below. Let us collect the previous results in  a final corollary:
\begin{corollary}
The Banach spaces $\Ell^p(\RR)$ ($1\leq p\leq \infty$,  Lebesgue measure) have the decomposition property.
\end{corollary}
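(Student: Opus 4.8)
The plan is to reduce the claim, for each exponent $p$, to the operator-theoretic equality \eqref{eq:kereqT} and then invoke the matching result already established above. The starting observation is the rephrasing at the beginning of \S\ref{sec:mean}: the decomposition property of $\Ell^p(\RR)$ with respect to a system $\alpha_1,\dots,\alpha_n$ of periods is precisely \eqref{eq:kereqT} for the shift operators $T_{\alpha_1},\dots,T_{\alpha_n}$, because \eqref{eq:kern} describes $\ker(T_{\alpha_1}-\Id)\cdots(T_{\alpha_n}-\Id)$ and \eqref{eq:periodd} describes $\ker(T_{\alpha_1}-\Id)+\cdots+\ker(T_{\alpha_n}-\Id)$. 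These shifts pairwise commute, so the whole task is to verify, for each $p$ and each system of periods, that they satisfy the hypotheses of one of the theorems above.

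The key input common to all cases is that Lebesgue measure on $\RR$ is translation-invariant: every translation $T_{\alpha_j}$ is measure-preserving, so Condition \ref{con:measless} holds, the push-forward measures $\mu\circ T_{\alpha_j}^{-1}$ coincide with $\mu$, and each Koopman operator is an isometry (in particular power bounded) on every $\Ell^p(\RR)$.

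I would then split into three cases. For $1<p<\infty$ the space $\Ell^p(\RR)$ is reflexive and the $T_{\alpha_j}$ are power bounded, so \eqref{eq:kereqT} follows from Example \ref{ex:refl} (equivalently, from the corollary covering the reflexive range above). For $p=1$ the shifts satisfy Condition \ref{con:measless}, so \eqref{eq:kereqT} holds by the preceding proposition on $\Ell^1$. For $p=\infty$ I would invoke Theorem \ref{thm:loca}: Lebesgue measure on $\RR$ is $\sigma$-finite, hence localizable by Example \ref{exa:loc}.1, and since the translations are measure-preserving their push-forward measures equal $\mu$ and are thus trivially absolutely continuous with respect to $\mu$; the hypotheses of Theorem \ref{thm:loca} are met and \eqref{eq:kereqT} follows. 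Collecting the three cases yields \eqref{eq:kereqT} for all $1\le p\le\infty$ and every system of periods, which is exactly the decomposition property of $\Ell^p(\RR)$.

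The only genuinely delicate case is $p=\infty$, where neither reflexivity nor the direct Ces\`aro-averaging argument of Proposition \ref{p:meanerg} is available; there the entire weight rests on the weak$^*$-continuity of the translations guaranteed by the localizability of $(\RR,\text{Lebesgue})$, which is precisely the mechanism encapsulated in Theorem \ref{thm:loca} via Proposition \ref{p:weakstar}. Beyond verifying that single hypothesis, the proof is a routine matter of matching the translation operators to the already-proven statements.
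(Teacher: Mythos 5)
Your proof is correct and follows essentially the same route as the paper: the corollary is there precisely a collection of the three preceding results, with the reflexive-range corollary of Proposition \ref{p:meanerg} for $1<p<\infty$, the $\Ell^1$ proposition for $p=1$, and Theorem \ref{thm:loca} (via $\sigma$-finiteness, hence localizability, of Lebesgue measure and measure-preservation of translations) for $p=\infty$. Nothing is missing.
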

Of course, $0$ is the one single periodic function in $\Ell^p(\RR)$ if $p<\infty$, hence the message of the previous result is that \eqref{eq:kern} has $0$ as the only  $\Ell^p$-solution if $p<\infty$. This follows also from a more general result of G.{}A.{} Edgar and J.{}M.{} Rosenblatt \cite[Cor.{} 2.7]{edgar} stating that the translates of a function  $f\in \Ell^p(\RR^d)$, $p<2d/(d-1)$ are linearly independent.

\subsubsection{More spaces with the decomposition property}

\begin{proposition}[M.{} Laczkovich, Sz.{} R\'ev\'esz \cite{laczkovich/revesz:1990}] The following spaces of real-valued functions on $\RR$ have the decomposition property:
\begin{enumerate}[a)]
\item $\BVb^1(\RR):=\bigl\{f:f\in\Bb(\RR)\text{ with unif. bdd. variation on $[x,x+1]$, $x\in\RR$}\bigr\}$
\item $\Lipb(\RR):=\bigl\{f:f\text{ is bounded and Lipschitz continuous}\bigr\}$
\item $\Lipb^{k}(\RR):=\bigl\{f:f\in \Cb(\RR)\text{ $k$ times differentiable with $f^{(k)}$ Lipschitz}\bigr\}$
\end{enumerate}
\end{proposition}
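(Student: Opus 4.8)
The plan is to realise each of the three spaces as a Banach space $X$ on which the relevant shift operators $T_{\alpha_1},\dots,T_{\alpha_n}$ act as commuting isometries, and then to produce an auxiliary vector topology $\tau$ for which the unit ball is $\tau$-compact and the shifts are $\tau$-continuous. Corollary \ref{c:vectortopology} then yields \eqref{eq:kereqT}, which is exactly the decomposition property with respect to $\alpha_1,\dots,\alpha_n$; since the periods $\alpha_j$ and the number $n$ are arbitrary, this establishes the full decomposition property in all three cases at once.

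First I would fix Banach norms making the shifts isometric: on $\BVb^1(\RR)$ take $\|f\|:=\|f\|_\infty+\sup_{x\in\RR}V_{[x,x+1]}(f)$, on $\Lipb(\RR)$ take $\|f\|:=\|f\|_\infty+\Lip(f)$, and on $\Lipb^k(\RR)$ take $\|f\|:=\sum_{j=0}^k\|f^{(j)}\|_\infty+\Lip(f^{(k)})$. Each constituent functional is translation invariant, so every $T_\alpha$ is an isometry (in particular power bounded with bound $1$), the shifts commute, and each space is invariant under all $T_\alpha$. For $\Lipb^k(\RR)$ one uses the Landau--Kolmogorov interpolation inequalities to see that boundedness of $f$ together with boundedness of $f^{(k+1)}$ (which follows from $f^{(k)}$ being Lipschitz) forces all intermediate derivatives to be bounded, so that the displayed norm is finite and complete.

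Next I would choose $\tau$ and verify $\tau$-compactness of the unit ball. For $\BVb^1(\RR)$ let $\tau$ be the topology of pointwise convergence: the sup-ball $\{\|f\|_\infty\le 1\}$ is a closed subset of the compact product $\prod_{x\in\RR}[-1,1]$, hence $\tau$-compact by Tychonoff, while the variation functional $f\mapsto\sup_x V_{[x,x+1]}(f)$ is $\tau$-lower semicontinuous (each $f\mapsto\sum|f(t_{i+1})-f(t_i)|$ is $\tau$-continuous), so the unit ball is a $\tau$-closed subset of a $\tau$-compact set and thus $\tau$-compact. For $\Lipb(\RR)$ and $\Lipb^k(\RR)$ let $\tau$ be the topology of uniform convergence on compact subsets of $\RR$ (in the $C^k$-sense for $\Lipb^k$); the unit ball then consists of uniformly bounded, equi-Lipschitz (resp.\ $C^k$ families with equi-Lipschitz top derivative), which are relatively $\tau$-compact by the Arzel\`a--Ascoli theorem, and the defining norm is again $\tau$-lower semicontinuous (sup-norms, Lipschitz constants and variations are all lower semicontinuous for pointwise, hence for locally uniform, convergence, and a finite sum of lower semicontinuous functionals is lower semicontinuous), so the unit ball is $\tau$-closed and therefore $\tau$-compact. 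In each case the shifts $T_\alpha$ are plainly $\tau$-continuous, since pointwise (resp.\ locally uniform, resp.\ $C^k$-local) convergence is preserved under translation of the argument.

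With these ingredients in place, Corollary \ref{c:vectortopology} applies verbatim to $T_{\alpha_1},\dots,T_{\alpha_n}$ and delivers \eqref{eq:kereqT}. I expect the only genuinely technical point to be the verification that the unit balls are $\tau$-compact: this is where the appropriate classical compactness principle must be invoked (Tychonoff together with lower semicontinuity of the variation in the $\BVb^1$ case, and Arzel\`a--Ascoli in the Lipschitz and $C^k$ cases), combined with the lower semicontinuity of the chosen norms that makes the balls $\tau$-closed. Everything else---commutativity, the isometry property, and $\tau$-continuity of the shifts---is routine.
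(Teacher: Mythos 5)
Your argument is correct, and for parts a) and b) it is essentially the paper's own: there too one introduces a translation-invariant Banach norm, observes that the unit ball is compact for the pointwise topology, and invokes Theorem \ref{th:topvsp} (equivalently Corollary \ref{c:vectortopology}). The genuine divergence is in part c). The paper does \emph{not} run the compactness argument directly on $\Lipb^{k}(\RR)$; instead it proves a transfer principle of independent interest: if $\FC\subseteq\Ce(\RR)$ is closed under adding constants and has the decomposition property, then so does $\GFC=\bigl\{f\in\Cb(\RR):\ f\ \text{is $k$-times differentiable with}\ f^{(k)}\in\FC\bigr\}$, and then applies this with $\FC=\Lipb(\RR)$ from part b). Your route---equipping $\Lipb^{k}(\RR)$ with the norm $\sum_{j=0}^{k}\|f^{(j)}\|_\infty+\Lip(f^{(k)})$ (finite by Landau--Kolmogorov), taking $\tau$ to be local $C^k$-convergence, and getting $\tau$-compactness of the unit ball from Arzel\`a--Ascoli together with $\tau$-lower semicontinuity of the norm---is a valid alternative and has the merit of treating all three cases by one uniform mechanism. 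What it loses is the generality of the lifting proposition, which transfers the decomposition property from an arbitrary constant-stable class $\FC$ to the associated $\GFC$. A minor remark: in b) and c) your locally uniform (resp.\ local $C^k$) topology coincides on the unit ball with the pointwise topology by equicontinuity, so the choice of $\tau$ there is interchangeable with the paper's.
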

The cases a) and b) can be handled by introducing an appropriate norm turning the spaces under consideration into  Banach spaces, then by noting that the unit ball is compact for the pointwise topology. Hence Theorem \ref{th:topvsp} is applicable. Details are in \cite{laczkovich/revesz:1990}. Part c) relies on he following result interesting in its own right:
\begin{proposition}[M.{} Laczkovich, Sz.{} R\'ev\'esz {\cite{laczkovich/revesz:1990}}]
Let $\FC\subseteq\Ce(\RR)$ be a function class with the property that whenever $f\in \FC$ and $c\in\RR$ then $f+c\in \FC$. Let $k\in \NN$ and define
\begin{equation*}
\GFC:=\bigl\{f:\text{$f\in \Cb(\RR)$ is $k$-times differentiable with $f^{(k)}\in \FC$}\bigr\}.
\end{equation*}
If the function class $\FC$ has the decomposition property so does  $\GFC$.
\end{proposition}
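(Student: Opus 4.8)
The plan is to push the difference equation down to the class $\FC$ by differentiating, solve the decomposition problem there, and then integrate the resulting periodic pieces $k$ times back up, taking care that periodicity and boundedness survive the integrations.

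First I would use that the difference operators commute with differentiation: since $\tfrac{\dd}{\dd x}\Diff_\alpha g=\Diff_\alpha g'$, a function $f\in\GFC$ satisfying \eqref{eq:kern} yields $\Diff_{\alpha_1}\cdots\Diff_{\alpha_n}f^{(k)}=\bigl(\Diff_{\alpha_1}\cdots\Diff_{\alpha_n}f\bigr)^{(k)}=0$. As $f^{(k)}\in\FC$ by the definition of $\GFC$, the decomposition property of $\FC$ furnishes $f^{(k)}=g_1+\cdots+g_n$ with $g_j\in\FC$ and $g_j$ being $\alpha_j$-periodic; in particular each $g_j$, being continuous and periodic, is bounded.

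The core step is to lift each periodic summand $g_j$ to a periodic function in $\GFC$ by $k$-fold integration, and here the only obstruction is that the antiderivative of a periodic function need not be periodic unless the function has mean zero. So I first normalize: let $m_j:=\tfrac1{\alpha_j}\int_0^{\alpha_j}g_j(t)\,\dd t$ and set $\tilde g_j:=g_j-m_j$. Since $\FC$ is closed under adding constants, $\tilde g_j\in\FC$, and it is $\alpha_j$-periodic of mean zero. For a continuous $\alpha$-periodic $u$ of mean zero, the operator $Iu(x):=\int_0^x u(t)\,\dd t-\tfrac1\alpha\int_0^\alpha\bigl(\int_0^s u(t)\,\dd t\bigr)\dd s$ returns again a continuous $\alpha$-periodic mean-zero function with $(Iu)'=u$; iterating, $G_j:=I^k\tilde g_j$ is $\alpha_j$-periodic, $C^k$, hence bounded, with $G_j^{(k)}=\tilde g_j\in\FC$. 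Thus $G_j\in\GFC$ and $G_j$ is $\alpha_j$-periodic.

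It remains to reconcile $\sum_j G_j$ with $f$. Writing $M:=\sum_j m_j$, we have $\sum_j\tilde g_j=f^{(k)}-M$, so $\bigl(f-\sum_j G_j\bigr)^{(k)}=M$ is constant and therefore $f-\sum_j G_j$ is a polynomial of degree at most $k$. Because $f$ and every $G_j$ are bounded, this polynomial is bounded, hence constant, say equal to $c$ (which en passant forces $M=0$). Finally I would set $f_1:=G_1+c$ and $f_j:=G_j$ for $j\ge2$; closedness of $\FC$ under constants keeps $f_1\in\GFC$, each $f_j$ is $\alpha_j$-periodic, and $\sum_j f_j=f$, the desired decomposition. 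The delicate point throughout is the construction of the periodic $k$-fold antiderivative: the mean-subtraction together with the mean-zero-preserving operator $I$ is precisely what guarantees that periodicity is preserved through all $k$ successive integrations.
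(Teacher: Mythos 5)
Your proof is correct and complete: differentiating the difference equation down to $\FC$, decomposing there, normalizing each summand to mean zero (exactly where the hypothesis that $\FC$ is closed under adding constants enters), and integrating $k$ times with a mean-zero-preserving antiderivative is precisely the strategy of the original argument, which this survey omits and defers to \cite{laczkovich/revesz:1990}. The final bookkeeping --- that $f-\sum_j G_j$ is a bounded polynomial of degree at most $k$, hence a constant, which also forces $\sum_j m_j=0$ --- is handled correctly.
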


\begin{problem}
There are several interesting Banach function spaces. Which of them do have the decomposition property? Just take your favorite  non-reflexive translation invariant Banach function space on $\RR$. Does it have the decomposition property? Denote by $\Ell^1_p(\RR)$ the set of functions with
\begin{equation*}
\|f\|_{1,p}:=\sup_{x\in\RR}\Big(\int_x^{1+x}|f(t)|^p\dd t\Bigr)^{1/p}<\infty,
\end{equation*}
and by $\SSS^p(\RR)$ the closure of trigonometric polynomials in this norm. The elements of $\SSS^p(\RR)$ are called \emph{Stepanov almost periodic functions}, see \cite{Besi}. Does the Banach space $\Ell^1_p(\RR)$  have the decomposition property? If the answer were affirmative it would follow that $f\in \Ell^1_p(\RR)$ and \eqref{eq:kern} imply that $f\in\SSS^p(\RR)$. (This is because periodic functions belong to $\SSS^p(\RR)$.) So, is an  $\Ell^1_p(\RR)$ solution of \eqref{eq:kern} Stepanov almost periodic?
\end{problem}

\subsubsection{One-parameter semigroups}
The original setting of the decomposition problem has a special feature, namely that the translation operators $T_t$ on translation invariant subspaces $E$ of $\RR^\RR$ form a one-parameter (semi)group of linear operators. In this section we shall study this aspect from a more general point of view. Given a Banach space $E$, a \emph{one-parameter semigroup} $T$ is a unital semigroup homomorphism $T:[0,\infty)\to \LLL(E)$, i.e., $T(t+s)=T(t)T(s)$ and $T(0)=\Id$ are fulfilled for every $t,s\geq 0$.  Whereas a \emph{one-parameter group} defined analogously as group homomorphism (into the group of invertible operators). On $\Bb(\RR)$ one can define the translation group by $T(t)f(x)=f(t+x)$ which is then, as said above, a one-parameter group.
\begin{problem}
Under which conditions does a Banach space $E$ have the decomposition property with respect to operators $T_1,\dots, T_n$ coming from a one-parameter (semi)group $T$ as $T_j=T(t_j)$ for some $t_j>0$, $j=1,\dots,n$?
\end{problem}
A one-parameter (semi)group is called a \emph{$C_0$-(semi)group} if it is strongly continuous, i.e., continuous into $\LLL(E)$ endowed with the strong (i.e, pointwise) operator topology. The translation group is not strongly continuous on $\Bb(\RR)$ or on $\Cb(\RR)$, but it is strongly continuous on $\BUC(\RR)$. A one-parameter (semi)group is called bounded if $\|T(t)\|\leq M$ for all $t \in [0,\infty)$ (or $t\in \RR)$. See \cite{EN00} for the general theory.
\begin{theorem}[V.{}M.{} Kadets, S.{}B.{} Shumyatskiy  \cite{kadets/shumyatskiy:2001}]\label{t:KS}
Let $T$ be a bounded $C_0$-group, and let $t_1,t_2>0$. Then
\begin{equation}\label{eq:semigroup2dpr}
\ker (T(t_1)-\Id)(T(t_2)-\Id)=\ker (T(t_1)-\Id)+\ker(T(t_2)-\Id).
\end{equation}
\end{theorem}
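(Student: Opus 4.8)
The plan is to prove only the nontrivial inclusion ``$\subseteq$'', the reverse one being automatic. After replacing $\|\cdot\|$ by the equivalent norm $\nm x\nm:=\sup_{t\in\RR}\|T(t)x\|$ I may assume the group is isometric. Given $f$ with $(T(t_1)-\Id)(T(t_2)-\Id)f=0$ I set $g:=(T(t_2)-\Id)f$; applying $T(t_1)-\Id$ shows $g\in E_1:=\ker(T(t_1)-\Id)$. Everything then reduces to finding one $f_1\in E_1$ with $(T(t_2)-\Id)f_1=g$, because $f_2:=f-f_1$ will automatically satisfy $(T(t_2)-\Id)f_2=0$. So I must solve a single cohomological equation \emph{inside} $E_1$.

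First I would exploit that $E_1$ is a closed $T$-invariant subspace on which $T(t_1)$ is the identity, so $T|_{E_1}$ is a bounded $C_0$-group of period $t_1$, that is, a strongly continuous representation of the compact group $\TT:=\RR/t_1\ZZ$. This has two useful effects. Each orbit $\{T(s)g:s\in\TT\}$ is the continuous image of a compact set, hence compact, so on $E_1$ we are in an almost periodic regime and $V:=T(t_2)|_{E_1}$ is mean ergodic (its Ces\`aro means converge to the projection $P_0$ onto the fixed space, cf.\ \cite[Thm.~8.20]{EFHN}); moreover vectors of finite spectrum are dense, by Fej\'er averaging over $\TT$. Next I would record the telescoping identity $\sum_{k=0}^{N-1}V^kg=\sum_{k=0}^{N-1}T(kt_2)g=T(Nt_2)f-f$, which is bounded uniformly in $N$. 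Dividing by $N$ gives $P_0g=0$ and hence $g\in\clran(T(t_2)-\Id)$, while the uniform boundedness of these ergodic sums is exactly the Gottschalk--Hedlund condition of Lemma~\ref{l:liftequiv}, the input that should turn ``$g$ in the closure of the range'' into ``$g$ in the range''.

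For incommensurable $t_1,t_2$ I would make this precise through the orbit map $\widehat g(s):=T(s)g\in\Ce(\TT,E_1)$: the rotation $\Theta(s)=s+t_2$ on $\TT$ is minimal, the Birkhoff sums $\sum_{k<N}\widehat g(\Theta^ks)=T(s)\bigl(T(Nt_2)f-f\bigr)$ are uniformly bounded, and I would seek $\psi\in\Ce(\TT,E_1)$ solving $\psi\circ\Theta-\psi=\widehat g$. From such a $\psi$ the equivariance defect is cured by averaging: with $\psi_1(s):=T(-s)\psi(s)$ one computes $V\psi_1(\Theta s)-\psi_1(s)=g$, and integrating over $\TT$ (which $\Theta$ preserves) yields $f_1:=\int_\TT T(-s)\psi(s)\,\dd s\in E_1$ with $(T(t_2)-\Id)f_1=g$. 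The commensurable case $t_2/t_1\in\QQ$ is separate and easy: here $S:=T(\tau)$ satisfies $S^p=\Id$ on $E_1$, so $E_1$ decomposes into finitely many isotypic components on each of which $T(t_2)-\Id$ is a nonzero scalar off the fixed space, and the identity $P_0^Sg=0$ lets me invert it there.

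The hard part will be the solvability step in the incommensurable case, i.e.\ the actual existence of the $E_1$-valued $\psi$ (equivalently, placing $g$ in the range and not merely the closure of the range of $T(t_2)-\Id$). The usual compactness proof of Gottschalk--Hedlund relies on local compactness of the target and does not survive the passage to an infinite-dimensional $E_1$; the small divisors $\ee^{2\pi ikt_2/t_1}-1\to0$ render the formal Fourier solution $\sum_{k\neq0}(\ee^{2\pi ikt_2/t_1}-1)^{-1}P_kg$ only boundedly Fej\'er-summable, so a priori it names a vector of the bidual rather than of $E_1$. This is precisely where the continuity of the group and the compactness of $\TT$ must enter essentially: for arbitrary commuting norm-one operators the corresponding $2$-decomposition genuinely fails (on $\co$ and on $\ell^1$), so no discrete argument can bridge the gap, and the proof has to use the $C_0$-structure to tame these divisors and pull the solution back from the bidual into $E_1$.
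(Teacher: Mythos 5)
There is a genuine gap, and you have in fact located it yourself: the entire content of the theorem is concentrated in the step you defer to the end, namely showing that $g=(T(t_2)-\Id)f\in E_1$ lies in the \emph{range} of $(T(t_2)-\Id)|_{E_1}$ and not merely in its closure (equivalently, that the $E_1$-valued cocycle equation $\psi\circ\Theta-\psi=\widehat g$ has a continuous solution). Your reduction to this single cohomological equation is correct, the telescoping identity $\sum_{k=0}^{N-1}T(kt_2)g=T(Nt_2)f-f$ and the conclusion $P_0g=0$, $g\in\clran(T(t_2)-\Id)|_{E_1}$ are correct, and the averaging trick $f_1=\int_{\TT}T(-s)\psi(s)\,\dd s$ that repairs equivariance is fine \emph{once $\psi$ exists}. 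But boundedness of the Birkhoff sums is not enough to produce $\psi$ when the target $E_1$ is infinite dimensional: the Gottschalk--Hedlund argument invoked in Lemma \ref{l:liftequiv} uses (local) compactness of the range space, and, as you note, the small divisors $\ee^{2\pi ikt_2/t_1}-1$ make the formal Fourier inverse unbounded. What is missing is the additional compactness input that makes the solvability go through --- in effect one must upgrade ``the set $\{T(Nt_2)f-f:N\in\NN\}$ is bounded'' to ``relatively compact.'' Announcing that the $C_0$-structure ``must enter essentially to tame these divisors'' is a description of the obstacle, not an argument; as written, the incommensurable case is unproved.

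For comparison, the route indicated in the text (following Kadets and Shumyatskiy) attacks exactly this point from the other side: one proves directly that $\ker(T(t_1)-\Id)(T(t_2)-\Id)\subseteq E_{\aap}$, i.e., that $f$ itself has relatively compact orbit. Once $f\in E_{\aap}$, the restrictions of $T(t_1)$ and $T(t_2)$ to the closed invariant subspace $E_{\aap}$ have relatively compact orbits, hence are mean ergodic by \cite[Thm.{} 8.20]{EFHN}, and Proposition \ref{p:meanerg} yields the decomposition with no need to solve the cocycle equation by hand. So the two approaches are genuinely different in organization (almost periodicity of $f$ plus mean ergodic projections, versus a lift-up inside $E_1$), but they founder or succeed on the same compactness assertion, and it is precisely that assertion that your proposal does not establish.
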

Translations on $\BUC(\RR)$ is a $C_0$-group of isometries, providing %%%we obtain
another proof of the $2$-decomposition property of $\BUC(\RR)$, formulated in Proposition \ref{p:BUC}.

In general the  idea is to find a closed subspace $F\subseteq E$ invariant under the semigroup operators $T(t)$, such that one can apply  Proposition \ref{p:meanerg} to the restricted operators. Concerning the nature of the problem there is one immediate candidate for this subspace. In what follows $T$ will be a fixed bounded $C_0$-semigroup. A vector $x\in E$ is called \emph{asymptotically almost periodic} (with respect to the semigroup $T$) if the orbit
$\{T(t) x:t\geq 0\}$ is relatively compact in $E$. Denote by $E_{\aap}$the collection of asymptotically almost periodic vectors, which is easily seen  to be a closed subspace of $E$ invariant under the semigroup operators. It can be proved that if $T$ is a bounded $C_0$-group then  for $x\in E_\aap$ one actually has also the relative compactness of the entire orbit $\{T(t)x:t\in \RR\}$. The proof of Theorem \ref{t:KS} by Kadets and Shumyatskiy establishes actually the fact that $\ker(T(t_1)-\Id)(T(t_2)-\Id)\subseteq E_\aap$.

The only known extensions/variations of the Kadets--Shumyatskiy result follow the same strategy (or some modifications of it) and are the following:
\begin{theorem}[B.{} Farkas \cite{Fa12a}]\label{thm:nocodecomp}
Let $E$ be a Banach space and let $T$ be a bounded $C_0$-group.
Suppose that $E$  does not contain an isomorphic copy of the Banach space  $\co$ of null sequences.
Then for every $n\in \NN$ and $t_1,\dots,t_n\in \RR$ we have
\begin{equation}\label{eq:semigroupdpr}
\ker(T(t_1)-I)\cdots(T(t_n)-I)=\ker(T(t_1)-I)+\cdots+\ker(T(t_n)-I).
\end{equation}
\end{theorem}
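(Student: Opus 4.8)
The plan is to follow the strategy indicated in the excerpt: exhibit a closed $T$-invariant subspace $F \subseteq E$ on which we can run the mean-ergodic argument of Proposition \ref{p:meanerg}, and show that every solution of the higher-order difference equation already lives in $F$. The natural candidate, as the text stresses, is $F := E_\aap$, the space of asymptotically almost periodic vectors. The crucial structural input is the hypothesis that $E$ contains no isomorphic copy of $\co$: this is exactly the kind of condition under which orbits that are ``almost'' relatively compact must genuinely be relatively compact, so that the difference equation forces vectors into $E_\aap$ rather than merely near it.

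First I would reduce to studying the restricted operators $T_j := T(t_j)$ acting on $E_\aap$. Since the orbit $\{T(t)x : t \in \RR\}$ of each $x \in E_\aap$ is relatively compact (using that $T$ is a bounded $C_0$-group, as noted in the excerpt), each $T_j$ restricted to $E_\aap$ has a relatively compact set of powers $\{T_j^m : m \in \NN\}$ in the strong operator topology; invoking the compactness criterion for mean ergodicity (as in the Example following Proposition \ref{p:meanerg}), each $T_j|_{E_\aap}$ is mean ergodic, and they pairwise commute. Hence Proposition \ref{p:meanerg} applied on $E_\aap$ yields
\begin{equation*}
\ker(T_1-\Id)\cdots(T_n-\Id)\big|_{E_\aap}=\ker(T_1-\Id)\big|_{E_\aap}+\cdots+\ker(T_n-\Id)\big|_{E_\aap}.
\end{equation*}
Since $\ker(T(t_j)-\Id) \subseteq E_\aap$ automatically (a fixed vector has a one-point, hence compact, orbit), the right-hand side above is genuinely the full sum $\ker(T(t_1)-\Id)+\cdots+\ker(T(t_n)-\Id)$ in $E$. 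Thus the theorem follows once we prove the containment
\begin{equation*}
\ker(T(t_1)-\Id)\cdots(T(t_n)-\Id)\subseteq E_\aap.
\end{equation*}

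This last containment is where I expect the real difficulty, and where the $\co$-hypothesis must enter. The argument should proceed by induction on $n$. For a solution $x$ of the $n$-th order equation, set $y := (T(t_n)-\Id)x$; then $y$ solves the $(n-1)$-th order equation, so by induction $y \in E_\aap$. One must then ``integrate'' the relation $(T(t_n)-\Id)x = y$ with $y$ asymptotically almost periodic back up to conclude $x \in E_\aap$. The telescoping identity $T(kt_n)x - x = \sum_{m=0}^{k-1} T(mt_n)y$ shows that controlling the orbit of $x$ amounts to controlling the partial sums $\sum_{m=0}^{k-1} T(mt_n)y$; here relative compactness of these partial sums is precisely the point where a bounded sequence could fail to have convergent behaviour by spreading mass like the unit vector basis of $\co$. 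The no-$\co$ hypothesis, via a Bessaga--Pe\l czy\'nski type dichotomy (a bounded series whose partial sums are not relatively compact would embed $\co$), rules this out and forces the orbit of $x$ to be relatively compact, i.e. $x \in E_\aap$. The main obstacle is therefore making this compactness-of-partial-sums argument precise and uniform across the continuous parameter $t$, rather than only along the arithmetic progression $mt_n$; bridging the discrete orbit $\{T(mt_n)x\}$ to the full continuous orbit $\{T(t)x : t\in\RR\}$ requires the strong continuity and boundedness of the group together with a standard approximation on the compact interval $[0,t_n]$.
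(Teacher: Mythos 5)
Your proposal follows exactly the strategy the paper outlines for this theorem: restrict to the closed invariant subspace $E_\aap$, on which the operators $T(t_j)$ are mean ergodic by relative compactness of orbits so that Proposition \ref{p:meanerg} applies, and prove the containment $\ker(T(t_1)-\Id)\cdots(T(t_n)-\Id)\subseteq E_\aap$ by induction, using the no-$\co$ hypothesis to pass from relative compactness of $\{T^{m+1}x-T^m x : m\in\NN\}=\{T^m y: m\in\NN\}$ to that of the orbit of $x$ itself. The dichotomy you invoke at the crucial step is precisely the Bohl--Bohr--Kadets type result recorded in the paper as Theorem \ref{t:cochar}, so your argument is essentially the same as the one the paper sketches (the paper itself defers the details to \cite{Fa12a}).
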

It is not surprising that Bohl--Bohr--Kadets type theorems (see \cite{KadecMI} and \cite{Bolis71}) play an important role here.
In this regard let us mention just the following:
\begin{theorem}[B.{} Basit \cite{Bolis71}, B.{} Farkas \cite{Fa12b}]\label{t:cochar}
A separable Banach space $E$ does not contain an isomorphic copy of $\co$ if and only if  for every  $x\in E$, $T\in \LLL(E)$ invertible with $T$ and $T^{-1}$ both power bounded the following statements are equivalent:
\begin{enumerate}[(i)]
 \item $\bigl\{T^{n+1}x-T^nx:n\in \NN\bigr\}$ is relatively compact.
  \item $\bigl\{T^{n+m}x-T^nx:n\in \NN\bigr\}$ is relatively compact for some $m\in \NN$, $m\geq 1$.
  \item $\bigl\{T^{n+m}x-T^nx:n\in \NN\bigr\}$ is relatively compact for all $m\in \NN$.
   \item $\bigl\{T^{n}x:n\in \NN\bigr\}$ is relatively compact.
   \end{enumerate}
   \end{theorem}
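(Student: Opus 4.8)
The four conditions separate into trivial and substantial implications, and the whole equivalence reduces to a single discrete Bohl--Bohr--Kadets statement. The implications (iv)$\Rightarrow$(iii)$\Rightarrow$(i)$\Rightarrow$(ii) require no hypothesis on $E$: if $\{T^nx:n\in\NN\}$ is relatively compact then so is the difference set of its closure, which contains every $T^{n+m}x-T^nx$, giving (iv)$\Rightarrow$(iii); while (iii)$\Rightarrow$(i) and (i)$\Rightarrow$(ii) are immediate (take $m=1$). So it suffices to close the cycle with (ii)$\Rightarrow$(iv). Here the plan is to pass to $S:=T^m$ (again invertible with $S,S^{-1}$ power bounded) for the $m$ furnished by (ii): restricting the index to multiples of $m$ shows $\{S^{k+1}x-S^kx:k\in\NN\}$ is relatively compact, and \emph{once I know this forces} $\{S^kx:k\in\NN\}=\{T^{km}x:k\in\NN\}$ to be relatively compact, the full orbit $\{T^nx:n\in\NN\}=\bigcup_{r=0}^{m-1}T^r\{T^{km}x:k\in\NN\}$ is relatively compact as a finite union of continuous images of a relatively compact set. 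Thus everything rests on the core implication (i)$\Rightarrow$(iv) for a single invertible, power-bounded $T$.

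For the core implication I would first renorm $E$ so that $T$ becomes an invertible isometry (legitimate since $T,T^{-1}$ are power bounded, as noted in the border-line remark above), and then restrict attention to the closed $T$-invariant subspace $E_\aap$ of vectors with relatively compact orbit. On $E_\aap$ the operator $T$ is almost periodic in the de Leeuw--Glicksberg sense, hence mean ergodic and equipped with a Bohr--Fourier decomposition $y=\sum_\lambda y_\lambda$ into unimodular eigencomponents $Ty_\lambda=\lambda y_\lambda$; note $y=(T-\Id)x$ lies in $E_\aap$ by (i). Writing $s_N:=\sum_{n=0}^{N-1}T^ny=T^Nx-x$, boundedness of the orbit of $x$ gives $\tfrac1N s_N\to0$, while the mean ergodic theorem gives $\tfrac1N s_N\to P_1y$, so the $T$-fixed component $P_1y$ vanishes and $y=\sum_{\lambda\neq1}y_\lambda$. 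The goal is to produce an almost periodic solution $x'$ of $(T-\Id)x'=y$; granting this, $x-x'\in\ker(T-\Id)$ has a constant, hence relatively compact, orbit, so $\{T^nx\}$ is relatively compact and (iv) holds.

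The heart of the matter --- and the only place where the hypothesis $E\not\supseteq\co$ is used --- is the convergence of the formally integrated series $x':=\sum_{\lambda\neq1}\frac{y_\lambda}{\lambda-1}$, for which $(T-\Id)x'=\sum_{\lambda\neq1}y_\lambda=y$. The denominators $\lambda-1$ can be arbitrarily small (eigenvalues accumulating at $1$), so mere boundedness of the partial sums $s_N=T^Nx'-x'$ does not yield norm convergence: this is exactly the small-divisor obstruction separating $\co$ from $\co$-free spaces. The plan is to follow Kadets' integration theorem (\cite{KadecMI}, \cite{Bolis71}): the bounded partial sums translate into a weakly unconditionally Cauchy series, and by the Bessaga--Pe\l czy\'nski characterization --- a Banach space contains no copy of $\co$ if and only if every weakly unconditionally Cauchy series converges unconditionally --- this series is norm-convergent, producing the desired almost periodic $x'$. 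I expect this passage from boundedness to relative compactness to be the main obstacle, since it is precisely where the Banach-space geometry enters; the remainder is bookkeeping with the Bohr--Fourier calculus. (Observe that this direction needs no separability.)

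Finally, for the converse I would argue by contraposition: if the separable space $E$ contains an isomorphic copy of $\co$, then by Sobczyk's theorem --- and this is where separability is essential --- that copy is complemented, say $E\cong\co\oplus Y$. It then suffices to exhibit on $\co$ an invertible power-bounded $T_0$ (with $T_0^{-1}$ power bounded) and a vector $x_0$ for which $\{T_0^{n+1}x_0-T_0^nx_0:n\in\NN\}$ is relatively compact while $\{T_0^nx_0:n\in\NN\}$ is not; this is the discrete form of Kadets' classical example of an almost periodic function with bounded but non-almost-periodic primitive (see \cite{Bolis71}, \cite{Fa12b}). Setting $T:=T_0\oplus\Id_Y$ and $x:=(x_0,0)$ yields an operator and vector on $E$ satisfying (i) but violating (iv), so the equivalence of (i)--(iv) fails, as required.
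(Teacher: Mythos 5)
The paper itself gives no proof of Theorem~\ref{t:cochar}---it is stated as a pointer to \cite{Bolis71} and \cite{Fa12b}---so there is nothing internal to compare against; measured against those sources, your sketch follows exactly the standard Bohl--Bohr--Kadets route: trivial implications plus reduction of (ii)$\Rightarrow$(iv) to the core case via $S=T^m$, renorming to an isometry, killing the mean $P_1y$ (which, note, follows simply from power boundedness of $T$, since $\frac1N(T^Nx-x)\to P_1y$ while $\|T^Nx\|\leq M\|x\|$), formally integrating the Bohr--Fourier series of $y=(T-\Id)x$, and invoking Bessaga--Pe\l czy\'nski in the $\co$-free case; Sobczyk plus the Kadets example for the converse. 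Two places in the sketch carry essentially all of the content and are asserted rather than argued. First, the passage ``the bounded partial sums translate into a weakly unconditionally Cauchy series'' is the entire theorem of Kadets--Basit: the Fourier series of an almost periodic vector need not converge in any order, so one cannot literally form $x'=\sum_{\lambda\neq1}y_\lambda/(\lambda-1)$ term by term; the actual argument runs through Bochner--Fej\'er (or Bohr) approximants of the formal antiderivative and shows that the resulting block series is perfectly bounded before Bessaga--Pe\l czy\'nski can be applied. Your sketch correctly names the obstruction and the tool but elides the construction that connects them. Second, in the converse, be careful with the claim that the counterexample lives on $\co$ via a \emph{diagonal} unimodular operator: for $x_0\in\co$ and $T_0$ diagonal with unimodular entries, the orbit $\{T_0^nx_0\}$ sits inside the compact set $\{z:|z_k|\leq|x_{0,k}|\}$, so (iv) never fails there. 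The discrete Kadets example is naturally realized on $c$ (convergent sequences) with $T_0=\mathrm{diag}(\ee^{i\theta_k})$, $\theta_k\to0$, and $x_0=(1,1,\dots)$, where $y_0=T_0x_0-x_0\in\co$ has precompact orbit while $\{T_0^nx_0\}$ is not equiconvergent; one then transports it to $\co$ via the isomorphism $c\cong\co$ (which preserves power boundedness, not isometry) before applying Sobczyk. With these two points filled in, your argument is the proof of \cite{Bolis71,Fa12b}.
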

The next class of $C_0$-semigroups for which the decomposition problem has positive solution is of those that are \emph{norm-continuous at infinity}, including also \emph{eventually norm-continuous} semigroups,  see \cite{mazon} or \cite[Sec.{} II.4]{EN00} for these notions.
\begin{theorem}[B.{} Farkas \cite{Fa12a}]\label{t:normcontdecomp}
Let $T$ be a bounded $C_0$-semigroup that is norm-continuous at infinity. Then for all $n\in\NN$ and $t_1,\dots,t_n\geq0$ \eqref{eq:semigroupdpr} holds.
\end{theorem}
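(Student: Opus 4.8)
The plan is to follow the general strategy indicated just before Theorem~\ref{t:KS}: isolate a closed $T$-invariant subspace on which the operators $T(t_j)$ become mean ergodic, and then invoke Proposition~\ref{p:meanerg}. The natural candidate is the space $E_\aap$ of asymptotically almost periodic vectors. Since the inclusion ``$\supseteq$'' in \eqref{eq:semigroupdpr} is trivial, the whole problem reduces to two claims: first, that
\begin{equation*}
\ker(T(t_1)-\Id)\cdots(T(t_n)-\Id)\subseteq E_\aap,
\end{equation*}
and second, that on $E_\aap$ the restricted operators $T(t_j)$ fulfil the hypotheses of Proposition~\ref{p:meanerg}.

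The second claim is the routine part, so I would dispose of it first. Fix $x\in E_\aap$; by definition the orbit $\{T(t)x:t\geq0\}$ is relatively compact, hence so is the sub-orbit $\{T(t_j)^m x=T(mt_j)x:m\in\NN\}$ for each $j$. An operator with relatively compact orbits is mean ergodic (e.g.\ \cite[Thm.~8.20]{EFHN}), so each restriction $T(t_j)|_{E_\aap}$ is mean ergodic; these restrictions commute, coming from a single semigroup, and $E_\aap$ is a closed, $T$-invariant subspace, hence a Banach space in its own right. Proposition~\ref{p:meanerg} then yields, for every $x$ in the kernel (which lies in $E_\aap$ by the first claim), a decomposition $x=x_1+\cdots+x_n$ with $x_j\in\ker(T(t_j)-\Id)$. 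As $E_\aap\subseteq E$ and the fixed-point spaces in $E_\aap$ sit inside those in $E$, this is precisely a decomposition witnessing \eqref{eq:semigroupdpr}.

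The first claim is the crux, and it is here that norm-continuity at infinity must enter. I would argue spectrally. For $x$ in the kernel set $u(t):=T(t)x$; applying $T(t)$ to the defining relation and using commutativity shows that $u$ is a bounded orbit annihilated by the operator polynomial $(T(t_1)-\Id)\cdots(T(t_n)-\Id)$, where the $T(t_j)$ now act as shifts $u(t)\mapsto u(t+t_j)$. This forces the half-line (Arveson/Carleman) spectrum of $u$ to be contained in the zero set of the corresponding symbol, namely
\begin{equation*}
\Lambda=\Bigl\{i\beta:\prod_{j=1}^n\bigl(\ee^{i\beta t_j}-1\bigr)=0\Bigr\}
=\Bigl\{i\beta:\beta\in\bigcup_{j=1}^n\tfrac{2\pi}{t_j}\ZZ\Bigr\},
\end{equation*}
which is countable and locally finite. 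For an arbitrary bounded semigroup, countable spectrum of an orbit is not enough to force asymptotic almost periodicity; what makes the argument run is that norm-continuity at infinity supplies the missing regularity of the boundary spectrum, in effect a weak spectral mapping theorem on $i\RR$ (see \cite{mazon}, \cite[Sec.~II.4]{EN00}), so that the imaginary spectrum behaves as for a group and one may deploy an Arendt--Batty--Lyubich--V\~u / countable-spectrum Tauberian argument. Its conclusion is exactly that a bounded orbit with countable half-line spectrum is asymptotically almost periodic, i.e.\ $x\in E_\aap$.

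I expect the main obstacle to be precisely this last implication: extracting, from norm-continuity at infinity, the spectral-mapping and total-ergodicity input that upgrades the merely countable spectrum of $u$ to relative compactness of its orbit. Concretely, one must verify that there is no residual contribution from a continuous part of the spectrum on $i\RR$ and that the (at most countable) boundary point spectrum is totally ergodic; once that is in place the decomposition follows mechanically from the two claims above. For $n=2$ and a bounded group this is the substance of Theorem~\ref{t:KS}, and the advantage of the spectral formulation is that it treats general $n$ uniformly, with the regularity hypothesis playing the role the group assumption played there.
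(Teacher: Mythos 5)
Your overall architecture is the right one, and it is the one the paper itself attributes to all known proofs of this type: show that $\ker(T(t_1)-\Id)\cdots(T(t_n)-\Id)\subseteq E_\aap$ and then apply Proposition~\ref{p:meanerg} on the closed invariant subspace $E_\aap$, where each $T(t_j)$ is mean ergodic because its orbits are relatively compact. Your treatment of this second half is correct and complete (relative compactness of $\{T(mt_j)x:m\in\NN\}$ for $x\in E_\aap$, mean ergodicity via \cite[Thm.{} 8.20]{EFHN}, and the observation that fixed vectors of the restrictions are fixed vectors in $E$). The reduction of the first half to a spectral statement is also sound as far as it goes: orbits of a bounded $C_0$-semigroup are automatically bounded and uniformly continuous, and the difference equation does confine the half-line spectrum of $u(t)=T(t)x$ to the countable set $\bigcup_j (2\pi/t_j)\ZZ$.

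The proposal is nevertheless incomplete, because the one step where the hypothesis of the theorem must do any work---that norm-continuity at infinity upgrades ``bounded, uniformly continuous orbit with countable half-line spectrum'' to ``relatively compact orbit''---is only announced, not proved; you yourself flag it as ``the main obstacle''. This is not a routine verification: countable spectrum alone does \emph{not} imply asymptotic almost periodicity in a general Banach space (otherwise Theorem~\ref{thm:nocodecomp} would need no hypothesis on $\co$; the failure already occurs for the classical Kadets example with values in $\co$). The countable-spectrum splitting theorems require in addition either that $E$ contain no copy of $\co$ or that every point of the boundary spectrum be ergodic, i.e.\ that the Ces\`aro means of $\ee^{-i\eta t}T(t)x$ converge for each $\eta\in\bigcup_j(2\pi/t_j)\ZZ$---and for $\eta\in(2\pi/t_j)\ZZ$ this is essentially the mean ergodicity of $T(t_j)$ along the orbit, which is uncomfortably close to what you are trying to prove. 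So you must extract from the Martinez--Mazon theory \cite{mazon} a concrete statement (a spectral mapping or splitting result for the peripheral spectrum on the closed orbit subspace, or a quasi-compactness-type decomposition) and show that it supplies exactly the missing ergodicity or compactness. Until that is done, what you have is a correct reduction plus a plausible plan rather than a proof; the entire mathematical content of the theorem sits in the unproved implication.
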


\begin{problem}
\begin{enumerate}
\item Is the Kadets--Shumyatskiy theorem true for every $n$? That is can one drop the geometric assumptions on $E$ from Theorem \ref{thm:nocodecomp}?
\item What about the case of $C_0$-semigroups? Can one get rid of the eventual norm-continuity in Theorem \ref{t:normcontdecomp}?
\item None of the above covers the decomposition property of $\Cb(\RR)$. What can be said about one-parameter semigroups that are only strongly continuous with respect to some weaker topology  on the Banach space $E$? Can one cover the decomposition property of $\Cb(\RR)$ by some extension of the results for one-parameter semigroups?
\end{enumerate}
\end{problem}
\section{Results for arbitrary transformations}
Let $X$ be a non-empty set.
The decomposition problem can be formulated in \emph{the whole
space of functions} $\RR^X$ with respect to \emph{arbitrary
commuting transformations} in $X^X$. To do that  to  a self map $T:X\to X$, called \emph{transformation}, we associate
the Koopman operator (denoted by the same letter) $Tf:=f\circ T$, and the $T$-\emph{difference operator}
$ \Diff_Tf:=Tf-f$.
A function $f$ satisfying $\Diff_T f\eqegy 0$ is then called
\emph{$T$-invariant}. A \emph{$(T_1,\dots,T_n)$-invariant decomposition} of some
function $f$ is a representation
\begin{equation}\label{invaridecdef}
f=f_1+\cdots +f_n ~,\qquad\text{where}\qquad  \Diff_{T_j} f_j \eqegy0 \quad (j=1,\dots,n).
\end{equation}

For pairwise commuting transformations $T_i$ the functional
equation
\begin{equation}\label{eqdifference}
\Diff_{T_1}\dots \Diff_{T_n} f = 0
\end{equation}
is evidently necessary for the existence of invariant decompositions. On the example of translations on $\RR$ we saw that it is not sufficient.
Now in this general setting our basic question sounds:
\begin{problem}
Give necessary and sufficient conditions, containing \eqref{eqdifference}, in order to have some $(T_1,\dots,T_n)$-invariant decomposition \eqref{invaridecdef}. Or give restrictions either on the transformations or on $X$ (but not on the function class $\RR^X$) such that \eqref{eqdifference} becomes also sufficient.
\end{problem}

More precisely, we focus on complementary conditions, functional equations, on the functions, which they must satisfy in case of existence of an invariant decomposition \eqref{invaridecdef} and which equations will also imply
existence of such a decomposition. Difference equations and/or inequalities occur here naturally, as is also suggested by the appearance of the Whitney condition in Theorem \ref{th:LRC}.

Further necessary conditions can be easily obtained. Indeed, as the transformations commute, \eqref{eqdifference} implies
\begin{equation}\label{eqdifferencekj}
\Diff_{T_1^{k_1}}\dots \Diff_{T_n^{k_n}} f = 0 \qquad (\forall k_1,\dots,k_n\in\NN).
\end{equation}

\smallskip\noindent
Now the major difficulties come from the following features:
\begin{enumerate}[1.]
\item The transformations $T_j$ may not be invertible.
\item The ``mix-up'' of transformations can be completely
    irregular: $T^5 S^3x=T^7S^2 x$ for some $x\in X$ and
    nothing similar for other points $y\in X$.
\item Functions on $X$ lack any structure beyond the obvious linear one---no
    boundedness, continuity, measurability, compatibility
    with underlying structure of $X$, nothing---so not
    much theoretical mathematics but pure combinatorics
    can be invoked.
\end{enumerate}
For two transformations, i.e., $n=2$, the answer is completely known:
\begin{theorem}[B.{} Farkas, Sz.{} R\'ev\'esz \cite{invari}]\label{thm:twodecomp}
Let $X$ be a non-empty set, let $S,T:X\to X$ be commuting
transformations, and let $f\in\RR^X$. The following are equivalent:
\begin{enumerate}[(i)]
\item \label{enum:1:i} There exists a decomposition
    $f=g+h$, with $g$ and $h$ being $S$- and $T$-invariant,
    respectively.
\item \label{enum:1:ii}$\Diff_S\Diff_T f\eqegy 0$, and if
    for some $x \in X$ and $k,n,k',n'\in\NN$ the equality
\begin{equation} \label{eq:mixingdef}
T^k S^n x=T^{k'} S^{n'}x
\end{equation}
holds, then
\begin{equation*}
f(T^k x)=f(T^{k'} x).
\end{equation*}
\item $\Diff_S\Diff_T f\eqegy 0$, and if for some $x \in X$
    and $k,n,k',n'\in\NN$ \eqref{eq:mixingdef} holds, then
\begin{equation*}
f(S^n x)=f(S^{n'} x).
\end{equation*}
\end{enumerate}
\end{theorem}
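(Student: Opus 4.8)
The plan is to dispatch the two necessity implications (i)$\Rightarrow$(ii) and (i)$\Rightarrow$(iii) by a short direct computation, and to put the real work into the constructive converse (ii)$\Rightarrow$(i); the implication (iii)$\Rightarrow$(i) then follows by interchanging the roles of $S$ and $T$. As a preliminary I would record that $\Diff_S\Diff_T f=0$ says precisely that $\phi:=\Diff_T f$ is $S$-invariant, and that telescoping together with the commutativity of $S$ and $T$ gives the identity $f(T^kS^nx)=f(T^kx)+f(S^nx)-f(x)$ for all $x\in X$ and $k,n\in\NN$. For necessity, suppose $f=g+h$ with $g$ $S$-invariant and $h$ $T$-invariant. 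If $T^kS^nx=T^{k'}S^{n'}x=:z$, then $S$-invariance of $g$ gives $g(T^kx)=g(S^nT^kx)=g(z)=g(S^{n'}T^{k'}x)=g(T^{k'}x)$, while $h(T^kx)=h(x)=h(T^{k'}x)$; adding these yields $f(T^kx)=f(T^{k'}x)$, which is (ii). The symmetric computation using $h(S^nx)=h(z)=h(S^{n'}x)$ and $g(S^nx)=g(x)=g(S^{n'}x)$ yields (iii).

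For the converse I would first reformulate the goal: it suffices to produce an $S$-invariant $g$ with $\Diff_T g=\phi$, because then $h:=f-g$ automatically satisfies $\Diff_T h=\Diff_T f-\phi=0$, so $f=g+h$ is the desired decomposition. Since any $S$-invariant function is constant on the classes of the equivalence $x\sim_S y\Leftrightarrow S^ax=S^by$ for some $a,b\in\NN$, I would pass to the quotient $\overline{X}:=X/{\sim_S}$ with projection $\pi$. Using commutativity one checks that $T$ descends to a well-defined map $\overline{T}$ on $\overline{X}$ and that $\phi$, being $S$-invariant, descends to $\overline\phi$; the task becomes solving the single-transformation ``cohomological'' equation $\overline g\circ\overline T-\overline g=\overline\phi$ on $\overline X$, after which $g:=\overline g\circ\pi$ is the sought $S$-invariant lift. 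A direct translation shows that hypothesis (ii) is exactly the compatibility condition that whenever $\overline T^k\xi=\overline T^{k'}\xi$ one has $\sum_{j<k}\overline\phi(\overline T^j\xi)=\sum_{j<k'}\overline\phi(\overline T^j\xi)$, since both partial sums pull back along $\pi$ to $f(T^kx)-f(x)$ and $f(T^{k'}x)-f(x)$ respectively.

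The construction of $\overline g$ then proceeds orbit by orbit for the functional graph of $\overline T$. Writing $\xi\approx\eta\Leftrightarrow\overline T^a\xi=\overline T^b\eta$ for some $a,b$, I would fix (by choice of representatives) a base point $\xi_0$ in each $\approx$-class, set $\overline g(\xi_0):=0$, and for an arbitrary $\xi$ in that class, with $\overline T^a\xi=\overline T^b\xi_0$, define $\overline g(\xi):=\sum_{j<b}\overline\phi(\overline T^j\xi_0)-\sum_{j<a}\overline\phi(\overline T^j\xi)$. Granting well-definedness, one verifies the equation $\overline g(\overline T\xi)-\overline g(\xi)=\overline\phi(\xi)$ by applying $\overline T$ to the relation $\overline T^a\xi=\overline T^b\xi_0$ (so $\overline T^a(\overline T\xi)=\overline T^{b+1}\xi_0$), comparing the defining expressions for $\xi$ and $\overline T\xi$, and cancelling the leftover term using $\overline\phi(\overline T^b\xi_0)=\overline\phi(\overline T^a\xi)$ at the coincidence point.

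The main obstacle is precisely the \emph{well-definedness} of $\overline g$: two representations $\overline T^a\xi=\overline T^b\xi_0$ and $\overline T^{a'}\xi=\overline T^{b'}\xi_0$ must give the same value. The key step is to reduce this to a single coincidence \emph{at the base point}: assuming $a'\ge a$, one has $\overline T^{b'}\xi_0=\overline T^{a'-a}(\overline T^a\xi)=\overline T^{a'-a+b}\xi_0$, so the compatibility condition (ii) applies at $\xi_0$ and gives $\sum_{j<b'}\overline\phi(\overline T^j\xi_0)=\sum_{j<a'-a+b}\overline\phi(\overline T^j\xi_0)$; combined with the cocycle additivity $\sum_{j<m+n}\overline\phi(\overline T^j\,\cdot\,)=\sum_{j<n}\overline\phi(\overline T^j\,\cdot\,)+\sum_{j<m}\overline\phi(\overline T^{n+j}\,\cdot\,)$ this collapses the two defining expressions to the same number. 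This combinatorial bookkeeping—showing that the ``same base point'' compatibility encoded in (ii) already forces consistency across all pairs of representations within an orbit—is the heart of the argument, while everything else is routine verification.
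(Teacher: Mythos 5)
Your proof is correct. The survey itself does not reproduce a proof of this theorem---it defers to \cite{invari} and only sketches the very first step, namely splitting $X$ into the classes of the joint equivalence $x\sim y \Leftrightarrow T^kS^nx=T^{k'}S^{n'}y$ and choosing representatives by the axiom of choice. Your argument fills this in with a slightly different organization: you first quotient by the $S$-relation alone, so that the problem becomes the single-transformation coboundary equation $\overline g\circ\overline T-\overline g=\overline\phi$ on $\overline X$, and only then work orbit by orbit for $\overline T$ with chosen base points. This is precisely the ``lift-up'' mechanism that the paper isolates elsewhere (Lemma \ref{lem:diffinvsolv}, and Lemma \ref{l:liftequiv} in the continuous setting): you seek an $S$-invariant $g$ with $\Diff_T g=\Diff_T f$ and set $h:=f-g$. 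Your translation of hypothesis (ii) into the consistency requirement on the partial sums $\sum_{j<k}\overline\phi(\overline T^j\xi)$ at self-coincidences $\overline T^k\xi=\overline T^{k'}\xi$ is exactly right (via the telescoping identity $f(T^kx)-f(x)=\sum_{j<k}\phi(T^jx)$, and noting that $\overline T^k\xi=\overline T^{k'}\xi$ unwinds to \eqref{eq:mixingdef} for any representative $x$). The well-definedness computation---reducing two representations $\overline T^a\xi=\overline T^b\xi_0$ and $\overline T^{a'}\xi=\overline T^{b'}\xi_0$ to a single coincidence $\overline T^{b'}\xi_0=\overline T^{a'-a+b}\xi_0$ at the base point and invoking cocycle additivity---closes the only genuinely delicate gap, and the verification of $\overline g(\overline T\xi)-\overline g(\xi)=\overline\phi(\xi)$ and the necessity direction are routine and correctly handled; (iii) indeed follows by exchanging the roles of $S$ and $T$.
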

Of course, the equivalence of (ii) and (iii) is due to symmetry, if one knows that any one of them is equivalent to (i). We do not give the proof (see \cite{invari}), but mention an idea that will be useful also below.
First we partition the set $X$ with respect to an
\emph{equivalence relation}:
$x,y\in X$ are equivalent if there exist $k,n,k',n'\in\NN$ such that  $T^k S^n x=T^{k'} S^{n'}y$. $X$ splits into equivalence classes $X/\hskip-5pt\sim$, from
which \emph{by the axiom of choice} we choose a representation
system.  Obviously, it is enough to define $g$ and $h$ on each of these
equivalence classes. Indeed, for $x\in X$ the elements $x$,
$Tx$ and $Sx$ are all equivalent, so the invariance of the
desired functions $g,h$ is decided already in the common
equivalence class. So the task is now reduced to defining the
functions $g$ and $h$ on a fixed, but arbitrary equivalence
class.

\smallskip\noindent For general $n\in \NN$, $n\geq 2$ the following difference equation type necessary conditions can be found:
\begingroup
\def\thetheorem{(\textasteriskcentered)}
\begin{condition}
For every $N\leq n$, disjoint $N$-term partition $B_1\cup
B_2\cup\cdots \cup B_N=\{1,2,\dots,n\}$, distinguished elements
$h_j\in B_j$ $(j=1,\dots,N)$, indices $0<k_j, l_j, l_j'\in
\NN$, $(j=1,\dots, N)$ and $z\in X$ once the conditions
\begin{equation}\label{eq:mcond}
T_{h_j}^{k_j} T_{i}^{l_i}z=T_{i}^{l_i'}z\qquad \mbox{for all $i\in
B_j\setminus \{h_j\}$, for all  $j=1,\dots, N$}
\end{equation}
are satisfied, then
\begin{equation}\label{eq:mconclusion}
\Diff_{T_{h_1}^{k_1}}\dots \Diff_{T_{h_{N}}^{k_N}}f(z)=0.
\end{equation}
\end{condition}
\endgroup\addtocounter{theorem}{-1}
\begin{theorem}[B.{} Farkas, Sz.{} R\'ev\'esz \cite{invari}]\label{th:necessity} Let $T_1,\dots, T_n$ be commuting transformations of $X$ and let $f$ be a real
function on $X$. In order to have a $(T_1,\dots,T_n)$-invariant
decomposition \eqref{invaridecdef} of $f$
Condition (\textasteriskcentered) is necessary.
\end{theorem}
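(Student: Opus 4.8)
The plan is to assume that a $(T_1,\dots,T_n)$-invariant decomposition $f=f_1+\cdots+f_n$ is given, where $\Diff_{T_m}f_m=0$, and to deduce \eqref{eq:mconclusion} for an arbitrary choice of the data $N$, $B_1,\dots,B_N$, $h_j$, $k_j$, $l_i$, $l_i'$ and $z$ satisfying \eqref{eq:mcond}. First I record two elementary facts: $\Diff_{T_m}f_m=0$ forces $f_m\circ T_m^{k}=f_m$ for every $k\in\NN$, i.e.\ $f_m$ is invariant under all powers of $T_m$; and since the transformations commute, the difference operators $\Diff_{T_{h_1}^{k_1}},\dots,\Diff_{T_{h_N}^{k_N}}$ commute with one another. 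By linearity of these operators it therefore suffices to prove, for each single summand $f_m$, that $\Diff_{T_{h_1}^{k_1}}\cdots\Diff_{T_{h_N}^{k_N}}f_m(z)=0$; summing over $m$ then yields \eqref{eq:mconclusion}.

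Fix $m\in\{1,\dots,n\}$ and let $j_0$ be the unique block index with $m\in B_{j_0}$. Writing $P_S:=\prod_{j\in S}T_{h_j}^{k_j}$ for $S\subseteq\{1,\dots,N\}$ (a well-defined composition, the factors commuting), the expansion of the iterated difference operator reads
\[
\Diff_{T_{h_1}^{k_1}}\cdots\Diff_{T_{h_N}^{k_N}}f_m(z)=\sum_{S\subseteq\{1,\dots,N\}}(-1)^{N-|S|}\,f_m(P_S z).
\]
I would pair the subsets according to whether they contain $j_0$: each $S$ with $j_0\notin S$ is matched with $S\cup\{j_0\}$, and the two carry opposite signs. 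Hence the whole sum collapses to $0$ as soon as
\[
f_m(P_S z)=f_m\bigl(P_S\,T_{h_{j_0}}^{k_{j_0}} z\bigr)\qquad\text{for every }S\subseteq\{1,\dots,N\}\text{ with }j_0\notin S.
\]

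Establishing this equality is the heart of the matter, and it splits into two cases according to the position of $m$ in its block. If $m=h_{j_0}$, then $T_{h_{j_0}}^{k_{j_0}}=T_m^{k_{j_0}}$ commutes with $P_S$, so $P_S T_{h_{j_0}}^{k_{j_0}}z=T_m^{k_{j_0}}P_S z$, and the invariance of $f_m$ under powers of $T_m$ gives the equality directly. If instead $m\in B_{j_0}\setminus\{h_{j_0}\}$, I invoke hypothesis \eqref{eq:mcond} with $j=j_0$ and $i=m$, namely $T_{h_{j_0}}^{k_{j_0}}T_m^{l_m}z=T_m^{l_m'}z$; applying $P_S$ to both sides and using commutativity rewrites this as $T_m^{l_m}\bigl(P_S T_{h_{j_0}}^{k_{j_0}}z\bigr)=T_m^{l_m'}(P_S z)$, and applying $f_m$ together with its invariance under $T_m^{l_m}$ and $T_m^{l_m'}$ yields exactly $f_m(P_S T_{h_{j_0}}^{k_{j_0}}z)=f_m(P_S z)$.

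The point I expect to require the most care is conceptual rather than computational: relation \eqref{eq:mcond} is only a \emph{pointwise} identity holding at $z$ (and, after translation, at the points $P_S z$), not an identity of operators, so one cannot simply factor out $\Diff_{T_{h_{j_0}}^{k_{j_0}}}$ and annihilate $f_m$ the way one does in the distinguished case. The sign-reversing pairing $S\leftrightarrow S\cup\{j_0\}$ is precisely the device that converts these isolated pointwise relations into the global cancellation of the whole alternating sum, and keeping the bookkeeping of commutations and of which index governs which block straight is the only real burden.
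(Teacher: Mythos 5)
Your argument is correct and complete. The survey itself states Theorem \ref{th:necessity} without proof (deferring to \cite{invari}), so there is nothing to compare against line by line, but your proof is the natural one: reduce by linearity to a single invariant summand $f_m$, expand $\prod_{j}(T_{h_j}^{k_j}-\Id)$ over subsets $S\subseteq\{1,\dots,N\}$, and cancel in pairs $S\leftrightarrow S\cup\{j_0\}$ where $j_0$ is the block containing $m$; the distinguished case uses only $T_m$-invariance of $f_m$, and the non-distinguished case uses the pointwise relation \eqref{eq:mcond} translated by $P_S$ together with invariance under $T_m^{l_m}$ and $T_m^{l_m'}$ (both exponents are positive by hypothesis, so no invertibility is needed). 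The only bookkeeping subtlety you might add a word about is that the Koopman operator reverses the order of composition ($S_{\mathrm{Koop}}T_{\mathrm{Koop}}g=g\circ T\circ S$), which is harmless here precisely because the transformations commute, so $P_S$ is unambiguous as a self-map of $X$.
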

If the blocks $B_j$ are all singletons the condition
\eqref{eq:mcond} is empty, so \eqref{eq:mconclusion} expresses
exactly \eqref{eqdifferencekj}. In particular, Condition
(\textasteriskcentered) contains \eqref{eqdifference}.

\smallskip\noindent For $n=3$ transformations Condition
(\textasteriskcentered) is not only necessary but also sufficient for the existence of invariant decompositions.
\begin{theorem}[B.{} Farkas, Sz.{} R\'ev\'esz \cite{invari}]\label{thm:three}
Suppose that $T_1$, $T_2$ and $T_3$ commute and that the
function $f$ satisfies Condition (\textasteriskcentered). Then
$f$ has a $(T_1,T_2,T_3)$-invariant decomposition.
\end{theorem}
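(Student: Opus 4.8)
The plan is to imitate the architecture of the continuous incommensurable case (Lemma~\ref{l:incommensurable}): localize the problem to a single orbit, peel off one transformation to drop the order of the difference equation, decompose the reduced data with the already-settled two-transformation result, and finally \emph{lift} the two pieces back up. First I would set up the reduction to a single orbit exactly as in the remark following Theorem~\ref{thm:twodecomp}. Declare $x\sim y$ whenever $T_1^{a_1}T_2^{a_2}T_3^{a_3}x=T_1^{b_1}T_2^{b_2}T_3^{b_3}y$ for some exponents in $\NN$. Each $T_j$ maps a $\sim$-class into itself, and $T_j$-invariance of a function is decided separately on each class, so it suffices to construct $f_1,f_2,f_3$ on one fixed class; using the axiom of choice I would fix a base point $z$ in it, every other point being joined to $z$ by a zigzag of the generators.

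Next I would lower the order. Put $h:=\Diff_{T_3}f$. Condition~(\textasteriskcentered) with all blocks singletons gives exactly $\Diff_{T_1}\Diff_{T_2}h=\Diff_{T_1}\Diff_{T_2}\Diff_{T_3}f=0$, while the ``mixing'' hypothesis of Theorem~\ref{thm:twodecomp} for $h$ relative to the pair $T_1,T_2$—after unfolding $h=\Diff_{T_3}f$—becomes an instance of Condition~(\textasteriskcentered) for $f$ with a two-block partition in which one block carries the index $3$. Granting this translation, Theorem~\ref{thm:twodecomp} applies to $h$ and produces a splitting $h=h_1+h_2$ with $\Diff_{T_1}h_1=0$ and $\Diff_{T_2}h_2=0$.

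The final step is the lift-up, the combinatorial analogue of Lemma~\ref{l:liftequiv}. I would seek $f_1$ with $\Diff_{T_1}f_1=0$ and $\Diff_{T_3}f_1=h_1$, and $f_2$ with $\Diff_{T_2}f_2=0$ and $\Diff_{T_3}f_2=h_2$; then $f_3:=f-f_1-f_2$ automatically satisfies $\Diff_{T_3}f_3=h-h_1-h_2=0$, so $(f_1,f_2,f_3)$ is the desired invariant decomposition. Each $f_j$ is defined by propagation from $z$: it accumulates the increment $h_j$ along $T_3$-steps, stays frozen along $T_j$-steps, and its behaviour along the third generator is the remaining degree of freedom to be pinned down consistently. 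The solvability of the lift is a pure \emph{single-valuedness} statement: for every syzygy $T^{\mathbf a}z=T^{\mathbf b}z$ in the orbit the two propagated values must coincide, and these coincidences are precisely the conclusions \eqref{eq:mconclusion} furnished by Condition~(\textasteriskcentered)—now genuinely invoking blocks of size $2$ and $3$, not merely the plain difference equation \eqref{eqdifferencekj}.

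The hard part will be exactly this lift-up, for two linked reasons. First, the increment functions $h_1,h_2$ are \emph{not} uniquely determined by $h$, so I cannot lift an arbitrary splitting; I must either show that the particular $h_1,h_2$ delivered by the propagation-based construction of Theorem~\ref{thm:twodecomp} are automatically liftable, or adjust them within the kernel of the decomposition so that every cycle in the orbit closes up. Second, verifying consistency means organizing \emph{all} relations among the three generators acting from $z$, and checking that each is covered by a legitimate instance of Condition~(\textasteriskcentered); this bookkeeping is what makes $n=3$ borderline, since for three transformations the relevant relations decompose into the block patterns that Condition~(\textasteriskcentered) with $N\le 3$ is tailored to control, whereas for larger $n$ one expects genuinely new, uncaptured obstructions. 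I would therefore spend the bulk of the argument making the propagation well defined around every such syzygy.
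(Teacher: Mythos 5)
Your plan follows essentially the same route the paper indicates: reduce the order by applying $\Diff_{T_3}$, decompose the resulting function via Theorem~\ref{thm:twodecomp}, and lift the pieces back using the analogue of Lemma~\ref{l:liftequiv}, namely Lemma~\ref{lem:diffinvsolv}, with Condition~(\textasteriskcentered) supplying the solvability criterion \eqref{twotransfcondi}. Note that the paper itself only sketches this ("combinatorially involved", "a series of lemmas") and defers the full bookkeeping — in particular the adjustment of $h_1,h_2$ that you rightly flag as the hard part — to \cite{invari}, so your outline matches the level of detail and the structure of the argument actually presented here.
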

Again the proof is combinatorially   involved, so let us just state one main ingredient, the "lift-up lemma" corresponding to Lemma \ref{l:liftequiv} above. It is proved itself in a
series of lemmas, which we do not detail here.
\begin{lemma}\label{lem:diffinvsolv} Let $T,S$ be commuting transformations of $X$ and let $g:X\to\RR$ be a function satisfying $\Diff_S g=0$. Then there exists a function $h: X\to \RR$ satisfying both $\Diff_S h= 0$ and $\Diff_T h =g$ if and only if for every $x\in X$ it holds
\begin{equation} \label{twotransfcondi}
\sum_{i=0}^{k-1} g(T^i x) =0 \quad\hbox{whenever}\quad T^kS^lx=S^{l'}x ~ \text{with some} ~k,l,l'\in \NN.
\end{equation}
\end{lemma}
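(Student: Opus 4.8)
The plan is to prove the two implications separately; the "only if" part is a short telescoping computation, while the real content lies in the "if" part, where condition \eqref{twotransfcondi} has to be converted into the internal consistency of an explicit construction of $h$.

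For necessity I would argue directly. Suppose $h$ exists with $\Diff_S h=0$ and $\Diff_T h=g$, so that $h(Sx)=h(x)$ and $h(Tx)-h(x)=g(x)$ for all $x$; telescoping gives $h(T^kx)-h(x)=\sum_{i=0}^{k-1}g(T^ix)$. If $T^kS^lx=S^{l'}x$, apply $h$ to both sides: the right-hand side equals $h(x)$ by $S$-invariance of $h$, while, writing $y:=S^lx$, the left-hand side equals $h(y)+\sum_{i=0}^{k-1}g(T^iy)$. Since $\Diff_S g=0$ and $TS=ST$ give $g(T^iS^lx)=g(T^ix)$ and $h(S^lx)=h(x)$, the left-hand side is $h(x)+\sum_{i=0}^{k-1}g(T^ix)$, and comparing yields $\sum_{i=0}^{k-1}g(T^ix)=0$, which is exactly \eqref{twotransfcondi}.

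For sufficiency I would build $h$ by prescribing its increments along $T$. Set $W(y,a):=\sum_{i=0}^{a-1}g(T^iy)$; because $\Diff_S g=0$ and $TS=ST$, this quantity is insensitive to inserting powers of $S$, and it obeys the cocycle rule $W(y,a+a')=W(y,a)+W(T^ay,a')$. Any admissible $h$ must satisfy $h(T^aS^by)=h(y)+W(y,a)$, which forces the construction. I would split $X$ into the classes of the equivalence relation $u\sim v\iff T^aS^bu=T^cS^dv$ for some $a,b,c,d\in\NN$ (an equivalence by commutativity), choose a representative $x_0$ in each class by the axiom of choice, put $h(x_0):=0$, and for $u\sim x_0$ pick any relation $T^aS^bu=T^cS^dx_0$ and set $h(u):=W(x_0,c)-W(u,a)$.

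The main obstacle is to show that $h(u)$ does not depend on the chosen relation, and this is the only place where \eqref{twotransfcondi} enters. Given a second relation $T^{a'}S^{b'}u=T^{c'}S^{d'}x_0$, I would form the common point $Q:=T^{a+a'}S^{b+b'}u$ and express it from $x_0$ in two ways, getting $T^{a+c'}S^{b+d'}x_0=T^{a'+c}S^{b'+d}x_0$. Assuming (without loss of generality) $a+c'\ge a'+c$ and factoring out $T^{a'+c}$ at the point $z:=T^{a'+c}x_0$ turns this into a relation of the return type $T^{k}S^{b+d'}z=S^{b'+d}z$ with $k=(a+c')-(a'+c)$; hence \eqref{twotransfcondi} forces $\sum_{i=0}^{k-1}g(T^iz)=0$, that is $W(x_0,c+a')=W(x_0,c'+a)$. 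Independently, breaking $W(u,a+a')$ at the intermediate point $T^au$ and converting its orbit to that of $x_0$ via the first relation (and $S$-invariance of $g$) gives $W(u,a+a')=W(u,a)+W(x_0,c+a')-W(x_0,c)$, while breaking at $T^{a'}u$ via the second relation gives $W(u,a+a')=W(u,a')+W(x_0,c'+a)-W(x_0,c')$. Subtracting these two identities and inserting $W(x_0,c+a')=W(x_0,c'+a)$ yields $W(x_0,c)-W(u,a)=W(x_0,c')-W(u,a')$, the desired independence. Once $h$ is well defined, applying the trivial relations $Su=Su$ and $Tu=Tu$ to the definition gives $h(Su)=h(u)$ and $h(Tu)=h(u)+g(u)$, i.e.\ $\Diff_S h=0$ and $\Diff_T h=g$, completing the construction.
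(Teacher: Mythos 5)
Your proof is correct. Note that the survey itself does not prove this lemma at all --- it states it as the key ``lift-up'' ingredient and defers the argument to \cite{invari}, remarking only that it is ``proved in a series of lemmas'' --- so there is no in-paper proof to compare against; your write-up is a complete, self-contained substitute. The necessity direction (telescoping plus $S$-invariance of $h$ and $g$) is exactly as it should be. For sufficiency, your orbit-wise construction $h(u):=W(x_0,c)-W(u,a)$ from a chosen relation $T^aS^bu=T^cS^dx_0$ is indeed forced by the desired properties, and the only genuine issue is well-definedness; your ``common point'' trick, reducing two relations to a return-type relation $T^kS^{b+d'}z=S^{b'+d}z$ at $z=T^{a'+c}x_0$ so that \eqref{twotransfcondi} yields $W(x_0,c+a')=W(x_0,c'+a)$, combined with the two ways of splitting $W(u,a+a')$, does close the gap correctly (I checked the bookkeeping, including the $S$-insensitivity $W(S^my,j)=W(y,j)$ used to transport sums from the orbit of $u$ to that of $x_0$). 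The final verification $h(Su)=h(u)$ and $h(Tu)=h(u)+g(u)$ is stated rather tersely --- ``applying the trivial relations'' --- but it does follow, e.g.\ by composing the chosen relation for $u$ with $S$ or $T$ to get relations for $Su$ and $Tu$ and using $g(T^au)=g(T^cx_0)$, which holds by $S$-invariance of $g$ applied to $S^bT^au=S^dT^cx_0$; spelling out that one identity would make the ending airtight.
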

\begin{problem}
Is Condition (\textasteriskcentered) equivalent to \eqref{invaridecdef} for all $n\in\NN$ ($n\geq 4$)?
\end{problem}
\subsubsection{Unrelated transformations}
If the orbits of the transformations show no recurrence a satisfactory answer can be given. The relevant notion is the following.

\begin{definition} We call two commuting transformations $S,T$ on $X$
\emph{unrelated} if $T^nS^k x= T^m S^l x$ can occur only if
$n=m$ and $k=l$. In particular, then neither of the two
transformations can have any cycles in their orbits, nor do
their joint orbits have any recurrence.
\end{definition}

If all pairs $T_i$ and $T_j$ ($1\leq i\ne j\leq n$) are unrelated, then Condition (\textasteriskcentered) degenerates, as in \eqref{eq:mcond} we necessarily have that all blocks $B_j$ are singletons. Hence Condition (\textasteriskcentered) reduces merely to \eqref{eqdifferencekj} or, equivalently, to \eqref{eqdifference}.

\begin{theorem}[B.{} Farkas, Sz.{} R\'ev\'esz \cite{invari}]\label{thm:nomix}
Suppose the transformations $T_1,\dots, T_n$ are pairwise
commuting and unrelated. Then the difference equation
\eqref{eqdifference} is equivalent to the existence of some
invariant decomposition \eqref{invaridecdef}.
\end{theorem}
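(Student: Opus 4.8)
The plan is to establish the nontrivial implication — that the difference equation \eqref{eqdifference} forces the existence of an invariant decomposition \eqref{invaridecdef} — by induction on $n$, using the lift-up Lemma \ref{lem:diffinvsolv} in exactly the manner that Lemma \ref{l:incommensurable} is handled in the continuous setting. The reverse implication is the trivial necessity already recorded, which holds for any commuting system.

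For the base case $n=1$ the hypothesis $\Diff_{T_1}f=0$ says that $f$ is itself $T_1$-invariant, so $f_1:=f$ does the job. For the induction step I would peel off the last transformation by setting $g:=\Diff_{T_n}f$. Since the $T_j$ commute, \eqref{eqdifference} yields $\Diff_{T_1}\cdots\Diff_{T_{n-1}}g=0$, and as $T_1,\dots,T_{n-1}$ are again pairwise commuting and unrelated, the induction hypothesis furnishes a decomposition $g=g_1+\cdots+g_{n-1}$ with $\Diff_{T_j}g_j=0$ for $j=1,\dots,n-1$.

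The heart of the argument is then to lift each $g_j$ to a function $f_j$ satisfying $\Diff_{T_j}f_j=0$ and $\Diff_{T_n}f_j=g_j$. I would apply Lemma \ref{lem:diffinvsolv} with $S:=T_j$, $T:=T_n$ and the datum $g_j$, which satisfies $\Diff_{T_j}g_j=0$ as the lemma requires. Its solvability criterion \eqref{twotransfcondi} demands that $\sum_{i=0}^{k-1}g_j(T_n^i x)=0$ whenever $T_n^k T_j^l x=T_j^{l'}x$ for some $k,l,l'\in\NN$. Here unrelatedness is precisely what makes the obstruction vanish: rewriting the recurrence as $T_n^k T_j^l x=T_n^0 T_j^{l'}x$ and invoking that $T_n$ and $T_j$ are unrelated forces $k=0$, whence the sum is empty and the condition holds vacuously. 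Thus each lift $f_j$ exists, for every $j=1,\dots,n-1$.

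Finally I would close the decomposition by setting $f_n:=f-(f_1+\cdots+f_{n-1})$ and verifying $\Diff_{T_n}f_n=\Diff_{T_n}f-(g_1+\cdots+g_{n-1})=g-g=0$, so that $f=f_1+\cdots+f_n$ is the desired $(T_1,\dots,T_n)$-invariant decomposition. I do not anticipate a genuine obstacle: the only point where the hypothesis enters is the verification that \eqref{twotransfcondi} is vacuous, and this is immediate from the definition of unrelatedness. In effect, the full combinatorial burden of the general mixing case — encoded in Condition (\textasteriskcentered) — collapses the moment recurrence among the joint orbits is excluded, leaving only the bare difference equation to propagate through the induction.
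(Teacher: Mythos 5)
Your proposal is correct and follows essentially the same route as the paper: induction on $n$, peeling off the last transformation, applying the lift-up Lemma \ref{lem:diffinvsolv} with the observation that unrelatedness renders condition \eqref{twotransfcondi} vacuous (the recurrence $T_n^kT_j^lx=T_j^{l'}x$ forces $k=0$), and closing with $f_n:=f-(f_1+\cdots+f_{n-1})$. No gaps.
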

\begin{proof}
Only sufficiency is to be proved. We argue by induction. The cases of small $n$ are obvious.
Let $F:=\Diff_{T_{n+1}} f$. Then $F$ satisfies a difference equation of order $n$, hence by the
inductive hypothesis we can find an invariant decomposition of
$F$ in the form $F=F_1+\cdots +F_n$, where $\Diff_{T_j} F_j \eqegy0$ for $j=1,\dots,n$.
Since the transformation are unrelated, condition \eqref{twotransfcondi} in Lemma \ref{lem:diffinvsolv}
is void, and therefore the ``lift-ups'' $f_j$ with $\Diff_{T_j}
f_j=0$, $\Diff_{T_{n+1}}f_j=F_j$ exist for all $j=1,\dots,n$.
Therefore, $f_{n+1}:=f-(f_1+\cdots+f_n)$ provides a function
satisfying $\Diff_{T_{n+1}} f_{n+1} = F -( F_1+\cdots + F_n) =0$. Thus a required decomposition of $f$ is established.
\qed\end{proof}

\subsubsection{Invertible transformations}
When the transformations $T_j$ are invertible, the situation simplifies somewhat. Denote by ${G}\subseteq X^X$ the group generated by $T_1,\dots, T_n$. As before, we work on equivalence classes, now \emph{orbits} ${O}:=\{Tx:T\in {G}\}$ for some $x\in X$,
under the action of the {transformation group ${G}$}. Given a group $G$ denote by $\langle a \rangle $ the cyclic group generated by $a$ i.e., $\langle a\rangle:= \{ a^n:n\in \ZZ\}$, and for
${H}\subseteq {G}$ let $[{H}]:=\bigcap\limits_{h\in H} \langle h
\rangle$.
\begingroup
\def\thetheorem{(\textasteriskcentered\textasteriskcentered)}
\begin{condition}\label{cond:diffmod}
For all orbits ${O}$ of
${G}$, for all partitions
\begin{equation*}B_1\cup B_2\cup\cdots
\cup
B_N=\bigl\{T_1\rest_{O},T_2\rest_{O},\dots,T_n\rest_{O}\bigr\}
\end{equation*}
and any element $S_j\in [B_j]$, $j=1,\dots, N$, we have that
\begin{equation}\label{eq:diffmod}
\Diff_{{S_1}}\dots
\Diff_{{S_N}}f\rest_{O}=0\quad\mbox{holds}.
\end{equation}
\end{condition}
\endgroup\addtocounter{theorem}{-1}

The next is the main result in this setting:
\begin{theorem}[B.{} Farkas, V.{} Harangi, T.{} Keleti, Sz.{} R\'ev\'esz \cite{FHKR}]\label{th:sufficiency}
Let $T_1,\dots,T_n$ be pairwise commuting invertible transformations
on a set $X$. Let $f:X\to \RR$ be any function. Then $f$ has a
$(T_1,T_2,\dots, T_n)$-invariant decomposition \eqref{invaridecdef} if and only if it satisfies Condition~\ref{cond:diffmod}.
\end{theorem}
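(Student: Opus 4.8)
The plan is to treat the two implications separately: necessity is a one-line verification, whereas sufficiency carries the whole weight and will be handled by reducing the problem to a statement about a single finitely generated abelian group and then inducting on $n$.

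\textbf{Necessity.} I would fix an orbit $O$ of $G$ and note that all the operators $\Diff_{S_1},\dots,\Diff_{S_N}$ are restrictions of commuting operators, since $S_1,\dots,S_N$ lie in the abelian group generated by $T_1\rest_{O},\dots,T_n\rest_{O}$. Given a decomposition $f=f_1+\dots+f_n$ with $\Diff_{T_i}f_i=0$, each restriction $T_i\rest_{O}$ lies in exactly one block $B_{j(i)}$, and $S_{j(i)}\in[B_{j(i)}]\subseteq\langle T_i\rest_{O}\rangle$; as $f_i$ is $T_i$-invariant this gives $\Diff_{S_{j(i)}}f_i\rest_{O}=0$. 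Because the factors commute, $\Diff_{S_1}\cdots\Diff_{S_N}$ annihilates each $f_i\rest_{O}$, hence $f\rest_{O}$, which is exactly \eqref{eq:diffmod}.

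\textbf{Sufficiency, reduction.} As in the discussion following Theorem~\ref{thm:twodecomp}, invariance is decided orbit by orbit, so by the axiom of choice it suffices to produce the decomposition on one fixed orbit $O$. Here invertibility pays off: since $G$ is abelian, every point-stabiliser equals the kernel of the action of the restriction group $A:=G\rest_{O}$ on $O$, and this kernel is trivial because $A$ consists of genuine permutations of $O$. Thus $A$ acts simply transitively, and fixing a base point I identify $O$ with the finitely generated abelian group $A$, the function $f$ with a real function on $A$, and the requirement ``$f_j$ is $T_j$-invariant'' with ``$f_j$ is a pullback from $A/\langle t_j\rangle$'', where $t_j:=T_j\rest_{O}$. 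Under this identification Condition~\ref{cond:diffmod} becomes the purely algebraic assertion that $\Diff_{S_1}\cdots\Diff_{S_N}f=0$ for every partition $\{t_1,\dots,t_n\}=B_1\cup\dots\cup B_N$ and every choice $S_j\in[B_j]=\bigcap_{t\in B_j}\langle t\rangle$, and the theorem is reduced to decomposing an arbitrary real function on a finitely generated abelian group with respect to a distinguished generating set.

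\textbf{Sufficiency, induction and lift-up.} I would induct on $n$, the small cases being covered by Theorem~\ref{thm:twodecomp} (and $n=1$ being trivial). Peel off the last transformation by putting $F:=\Diff_{t_n}f$. That $F$ satisfies the analogue of Condition~\ref{cond:diffmod} for $t_1,\dots,t_{n-1}$ on every coset of $A':=\langle t_1,\dots,t_{n-1}\rangle$ is immediate: given a partition of $\{t_1,\dots,t_{n-1}\}$ into $M$ blocks with chosen elements $S_1,\dots,S_M$, adjoin $\{t_n\}$ as a further singleton block with $S=t_n\in\langle t_n\rangle$, and the resulting instance of Condition~\ref{cond:diffmod} for $f$ reads precisely $\Diff_{S_1}\cdots\Diff_{S_M}F=0$. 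The induction then gives $F=F_1+\dots+F_{n-1}$ with $\Diff_{t_j}F_j=0$. Next I would lift each $F_j$ to an $f_j$ with $\Diff_{t_j}f_j=0$ and $\Diff_{t_n}f_j=F_j$ via the invertible instance of Lemma~\ref{lem:diffinvsolv}; since the action is free, a recurrence $t_n^k t_j^l x=t_j^{l'}x$ amounts to $t_n^k\in\langle t_j\rangle$, i.e. $t_n^k\in[\{t_j,t_n\}]$, so the solvability condition \eqref{twotransfcondi} reads $\sum_{i=0}^{k-1}F_j(t_n^i x)=0$ whenever $t_n^k\in\langle t_j\rangle$. With all lift-ups in hand, $f_n:=f-(f_1+\dots+f_{n-1})$ is automatically $t_n$-invariant and the decomposition is complete.

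\textbf{The main obstacle.} The whole difficulty sits in the seam between the last two steps: the inductive decomposition $F=\sum_j F_j$ is far from unique, and a careless choice need not satisfy the lift-up conditions $\sum_{i=0}^{k-1}F_j(t_n^i x)=0$, which constrain the \emph{individual} summands while Condition~\ref{cond:diffmod} a priori only constrains combinations. The heart of the argument must therefore be to show that the partition-indexed difference equations for $f$ --- exactly those in which $t_j$ and $t_n$ are grouped into one block supplying $S=t_n^k=t_j^m\in[\{t_j,t_n\}]$ --- can be parlayed into the vanishing of these telescoping sums for a suitably coordinated choice of the $F_j$; this is likely to force a strengthened inductive hypothesis recording the relevant summation identities, and the bookkeeping of how the cyclic-intersection data $[B_j]$ behaves when $t_n$ is deleted is where the combinatorics becomes genuinely heavy. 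As a reassurance that the condition is of the right shape, on a \emph{finite} orbit the problem collapses: passing to characters of the finite abelian group $A$, already the singleton-partition instance of Condition~\ref{cond:diffmod} forces every $\chi$ in the spectrum of $f$ to satisfy $\chi(t_j)=1$ for some $j$, and assigning each such $\chi$ (together with $\overline{\chi}$) to the corresponding block yields the decomposition outright --- so the genuine content of the theorem lives entirely in the infinite orbits, where no Fourier argument is available for arbitrary functions.
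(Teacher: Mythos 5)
Your necessity argument and your reduction of sufficiency to a single orbit are both correct: since $A:=G\rest_O$ is abelian and consists of honest permutations of $O$, every point stabiliser equals the (trivial) kernel of the action, so $A$ acts simply transitively and the problem becomes one about an arbitrary real function on a finitely generated abelian group with distinguished generators $t_1,\dots,t_n$. The inductive skeleton is also the right one and matches the strategy of \cite{FHKR} (the survey itself gives no proof of Theorem~\ref{th:sufficiency}, saying only that it relies on a variant of Lemma~\ref{lem:diffinvsolv}): peel off $t_n$, check that $F=\Diff_{t_n}f$ inherits Condition~\ref{cond:diffmod} for $t_1,\dots,t_{n-1}$ on the cosets of $\langle t_1,\dots,t_{n-1}\rangle$ by adjoining $\{t_n\}$ as a singleton block, decompose $F=F_1+\cdots+F_{n-1}$ by induction, and lift each $F_j$ via Lemma~\ref{lem:diffinvsolv}, whose solvability condition in the free setting you correctly identify as $\sum_{i=0}^{k-1}F_j(t_n^ix)=0$ whenever $t_n^k\in\langle t_j\rangle$.

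However, the step you label ``the main obstacle'' and leave open is not a seam to be smoothed later; it is the entire content of the theorem beyond the $n=2$ and unrelated cases, and your proposal does not supply it. Condition~\ref{cond:diffmod} constrains $f$ only through partition-indexed identities in which $t_j$ and $t_n$ may share a block, while the lift-up demands the vanishing of telescoping sums of the \emph{individual} summands $F_j$, which the inductive hypothesis determines only up to functions invariant under several $t_i$ simultaneously; a generic choice of the $F_j$ will violate the lift-up conditions even when $f$ satisfies Condition~\ref{cond:diffmod}. Closing this gap requires a substantially strengthened inductive statement that produces the $F_j$ together with the needed summation identities, with careful tracking of how the intersections $[B_j]$ behave when $t_n$ is inserted into or deleted from a block --- this is precisely the combinatorial core of \cite{FHKR}, and without it the sufficiency direction remains unproved for $n\geq 3$ whenever some $t_j$ and $t_n$ are related. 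Your finite-orbit character computation is a correct consistency check, but as you yourself observe it says nothing about the infinite orbits where the difficulty actually lives.
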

The proof relies on a variant of Lemma \ref{lem:diffinvsolv}.

\subsubsection{Decompositions on groups}

Let us see some consequences. Let $G$ be a group, and let $a_1,\dots, a_n\in G$. Consider the actions of $a_1,\dots, a_n$ on $G$ as left multiplications. For a function $f:G\to \RR$ we introduce the \emph{left $a$-difference operator} $\Diff_af(x):= f(ax)-f(x)$. The function $f$ is called \emph{left $a$-invariant} (or left $a$-periodic) if $\Diff_a f=0$. Since the actions are transitive, the above result takes the following form:
\begin{corollary}
\label{cor:group} Let ${G}$ be a group and $a_1,\dots,a_n\in{G}$  pairwise commuting. Then a function
$f:{G}\to \RR$ decomposes into a sum of
left $a_j$-invariant functions, $f=f_1+\cdots+ f_n$, if and only if
for all partitions $B_1\cup B_2\cup\cdots \cup
B_N=\{a_1,\dots,a_n\}$ and for each element $b_j\in [B_j]$
\begin{equation*}
\Diff_{{b_1}}\dots \Diff_{{b_N}}f=0.
\end{equation*}
\end{corollary}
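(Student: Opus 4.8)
The statement is the specialization of Theorem~\ref{th:sufficiency} to the case where $X=G$ is a group and the transformations $T_j$ are the left multiplications $x\mapsto a_jx$. So the plan is simply to verify that, under this identification, Condition~\ref{cond:diffmod} collapses exactly to the stated single partition condition, and then quote Theorem~\ref{th:sufficiency}. First I would note that each left multiplication by $a_j$ is invertible (with inverse left multiplication by $a_j^{-1}$) and that they pairwise commute precisely because the $a_j$ pairwise commute in $G$; thus the hypotheses of Theorem~\ref{th:sufficiency} are met and the $T_j$-difference operator $\Diff_{T_j}$ agrees with the left $a_j$-difference operator $\Diff_{a_j}$ defined here.

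**Reducing the orbit structure.** The key simplification is that the left-multiplication action of the generated group $G$ (as a subgroup of $X^X$) on $X=G$ is transitive: for any $x,y\in G$ the element $yx^{-1}$ carries $x$ to $y$. Hence there is only one orbit, namely all of $G$, and the quantifier ``for all orbits $O$'' in Condition~\ref{cond:diffmod} becomes vacuous, with every restriction $T_j\rest_{O}$ being just $T_j$ on the whole group. So I would next identify the cyclic group $\langle T_j\rangle$ generated by the left multiplication $T_j$ with the left multiplications by powers $a_j^k$ ($k\in\ZZ$), i.e.\ with the cyclic group $\langle a_j\rangle\subseteq G$. Under this identification the intersection $[B_j]=\bigcap_{T\in B_j}\langle T\rangle$ corresponds to $\bigcap\langle a_i\rangle$ over the indices $i$ in the block, so choosing $S_j\in[B_j]$ is the same as choosing $b_j\in[B_j]$ in the group-theoretic sense. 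The display \eqref{eq:diffmod} then reads $\Diff_{b_1}\cdots\Diff_{b_N}f=0$ on all of $G$, which is exactly the condition in the corollary.

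**The main point to check.** The one genuine thing to verify carefully is that the intersection of cyclic subgroups taken in the group $X^X$ of Koopman/transformation operators matches the intersection $[B_j]$ taken in the abstract group $G$ via the left-regular representation. This is where transitivity and the absence of fixed points of nontrivial left multiplications matter: the map $a\mapsto(x\mapsto ax)$ is a faithful homomorphism of $G$ into $X^X$ (it is injective since $ax=x$ for some $x$ forces $a=e$), so distinct group elements give distinct transformations and $\langle a_j\rangle$ injects onto $\langle T_j\rangle$. Faithfulness guarantees that the lattice of cyclic subgroups and their intersections is preserved, so $[B_j]$ computed on the operator side equals $[B_j]$ computed in $G$. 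I expect this faithfulness/transitivity bookkeeping to be the only nontrivial step; everything else is a direct translation of notation. Once it is in place, applying Theorem~\ref{th:sufficiency} yields the equivalence between the existence of the decomposition $f=f_1+\cdots+f_n$ into left $a_j$-invariant summands and the stated partition condition, completing the proof.
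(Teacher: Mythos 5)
Your proposal is correct and takes essentially the same route as the paper, which obtains the corollary by specializing Theorem~\ref{th:sufficiency} to left multiplications and noting that the orbit and cyclic-subgroup bookkeeping in Condition~\ref{cond:diffmod} collapses to the stated global difference equations. One small caveat: the group generated by the $T_j$ is only (the image of) $\langle a_1,\dots,a_n\rangle$, which need not act transitively on $G$, so ``there is only one orbit'' is not literally true; however, since left multiplications act freely, the identification $\langle T_j\rest_O\rangle\cong\langle a_j\rangle$ is the same on every orbit, and the faithfulness observation you already make supplies exactly the uniformity needed to pass from the orbit-wise condition to the global one.
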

In a torsion free Abelian group $A$ for $B\subseteq A $ the generator of the cyclic group $[B]$ is uniquely determined (up to taking inverse). In \cite{invari} we called this (maybe two) element(s) the \emph{least common multiple} of the elements in $B$. For instance, with this terminology we have that the least common multiple of $1$ and $\sqrt{2}$ in the group $(\RR,+)$ is $0$. Then we have the next result:
\begin{corollary} \label{cor:abeltorsfree}
Let ${A}$ be a torsion free Abelian group and
$a_1,\dots,a_n\in {A}$. A function $f:{A}\to \RR$
decomposes into a sum of $a_j$-periodic functions, $f=f_1+\cdots+
f_n$, if and only if for all partitions $B_1\cup B_2\cup\cdots \cup
B_N=\{a_1,\dots,a_n\}$ and $b_j$ being the least common multiple of
the elements in $B_j$ one has
\begin{equation}\label{eq:b1bNDiffEq}
\Diff_{{b_1}}\dots \Diff_{{b_N}}f=0.
\end{equation}
\end{corollary}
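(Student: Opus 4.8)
The plan is to read this off directly from Corollary~\ref{cor:group}, which already characterizes the existence of a decomposition on an arbitrary group with pairwise commuting generators $a_1,\dots,a_n$. Since $A$ is Abelian, left $a_j$-invariance coincides with $a_j$-periodicity, so Corollary~\ref{cor:group} applies verbatim: $f$ admits the decomposition $f=f_1+\cdots+f_n$ if and only if, for every partition $B_1\cup\cdots\cup B_N=\{a_1,\dots,a_n\}$ and \emph{every} element $c_j\in[B_j]$, one has $\Diff_{c_1}\cdots\Diff_{c_N}f=0$. The only work left is to show that this apparently much larger family of difference equations is equivalent, per partition, to the single equation \eqref{eq:b1bNDiffEq} obtained by taking $c_j=b_j$, the least common multiple generating $[B_j]$.

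One direction is immediate: each $b_j$ lies in $[B_j]$, so the condition of Corollary~\ref{cor:group} trivially implies \eqref{eq:b1bNDiffEq}. For the converse I would exploit that, because $A$ is torsion free, each $[B_j]=\bigcap_{a\in B_j}\langle a\rangle$ is cyclic, say $[B_j]=\langle b_j\rangle$, so every $c_j\in[B_j]$ is a power $c_j=b_j^{m_j}$ with $m_j\in\ZZ$. Writing $T_a f:=f(a\,\cdot)$ for the Koopman operator, so that $\Diff_a=T_a-\Id$ and $T_{b_j^{m_j}}=T_{b_j}^{m_j}$ (all translations invertible since $A$ is a group), the standard factorization $T_b^{m}-\Id=R_m(T_b)\,\Diff_b$, with $R_m$ a Laurent polynomial in $T_b$ and with the convention $R_0=0$ since $\Diff_{b^0}=\Id-\Id=0$, gives $\Diff_{c_j}=R_{m_j}(T_{b_j})\,\Diff_{b_j}$.

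Since $A$ is Abelian, all translation operators---hence all the $\Diff$'s and all the $R_{m_j}(T_{b_j})$'s---commute. Therefore, assuming \eqref{eq:b1bNDiffEq}, for any choice $c_j=b_j^{m_j}$ I can collect the polynomial factors to the front:
\begin{equation*}
\Diff_{c_1}\cdots\Diff_{c_N}f = R_{m_1}(T_{b_1})\cdots R_{m_N}(T_{b_N})\,\Diff_{b_1}\cdots\Diff_{b_N}f = 0.
\end{equation*}
This recovers the full condition of Corollary~\ref{cor:group}, which then delivers the desired decomposition, completing the equivalence.

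I expect no serious obstacle here: the result is essentially a specialization of Corollary~\ref{cor:group}, and the torsion-free hypothesis is used precisely to guarantee that $[B_j]$ is cyclic, so that a single generator $b_j$ controls the entire subgroup. The only technical care needed is the bookkeeping in the operator factorization together with the degenerate cases where some $b_j$ is the identity---that is, where the elements of $B_j$ are pairwise incommensurable and $[B_j]$ is trivial---in which case \eqref{eq:b1bNDiffEq} holds automatically because $\Diff_{b_j}=0$, consistently with Corollary~\ref{cor:group}.
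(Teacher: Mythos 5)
Your proposal is correct and follows essentially the same route as the paper: the paper also obtains Corollary~\ref{cor:abeltorsfree} by specializing Corollary~\ref{cor:group} (itself a consequence of Theorem~\ref{th:sufficiency}), using that in a torsion free Abelian group each $[B_j]$ is cyclic with generator the least common multiple $b_j$, so that the condition for all elements of $[B_j]$ reduces to the single equation \eqref{eq:b1bNDiffEq} via exactly the kind of factorization $\Diff_{b_j^{m}}=R_{m}(T_{b_j})\Diff_{b_j}$ you describe. Your explicit treatment of negative powers and of the degenerate case $b_j=e$ is a welcome bit of care that the paper leaves implicit.
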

If we specify to $A=\RR$ and take $\alpha_1,\dots,\alpha_n$ incommensurable we obtain the following result first proved in \cite{MortolaPeirone}.
\begin{corollary}[S.{} Mortola, R.{} Peirone \cite{MortolaPeirone}, B.{} Farkas, Sz.{} R\'ev\'esz \cite{invari}]
Suppose $\alpha_1,\dots,\alpha_n\in\RR$ are incommensurable. Then a function $f:\RR\to\RR$ satisfies the difference equation \eqref{eq:kern} if and only if it has periodic decomposition \eqref{eq:periodd}.
\end{corollary}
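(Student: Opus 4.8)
The plan is to derive this statement as a direct specialization of Corollary \ref{cor:abeltorsfree} to the torsion-free Abelian group $A=(\RR,+)$, taking $a_j=\alpha_j$. The implication \eqref{eq:periodd}$\Rightarrow$\eqref{eq:kern} is the trivial one already recorded at the outset, so the entire content lies in showing that the single equation \eqref{eq:kern} already forces \emph{all} of the partition conditions \eqref{eq:b1bNDiffEq} appearing in Corollary \ref{cor:abeltorsfree}.

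First I would inspect the least common multiples $b_j$ attached to the various blocks. For a singleton block $B_j=\{\alpha_j\}$ one has $[B_j]=\langle\alpha_j\rangle=\alpha_j\ZZ$, so its generator is $b_j=\alpha_j$. For a block $B_j$ containing at least two of the $\alpha_i$, incommensurability gives $[B_j]=\bigcap_{\alpha_i\in B_j}\alpha_i\ZZ\subseteq\alpha_p\ZZ\cap\alpha_q\ZZ=\{0\}$ for any two distinct $p,q$ with $\alpha_p,\alpha_q\in B_j$; hence $b_j=0$. This is precisely the phenomenon noted just before the corollary, namely that the least common multiple of incommensurable reals in $(\RR,+)$ is $0$.

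The crux is then the elementary observation that $\Diff_0$ is the zero operator, since $\Diff_0 g(x)=g(x+0)-g(x)=0$ for every $g$. Consequently, whenever a partition $B_1\cup\dots\cup B_N=\{\alpha_1,\dots,\alpha_n\}$ has at least one non-singleton block, the product $\Diff_{b_1}\cdots\Diff_{b_N}$ contains a factor $\Diff_0=0$, so condition \eqref{eq:b1bNDiffEq} holds trivially for that partition. The only partition that survives is the finest one, in which every block is a singleton; there $N=n$ and $(b_1,\dots,b_N)=(\alpha_1,\dots,\alpha_n)$, so \eqref{eq:b1bNDiffEq} reads exactly $\Diff_{\alpha_1}\cdots\Diff_{\alpha_n}f=0$, i.e.\ \eqref{eq:kern}.

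Putting these together, the whole family of conditions in Corollary \ref{cor:abeltorsfree} collapses onto the single equation \eqref{eq:kern}. Thus \eqref{eq:kern} is equivalent to the full hypothesis of Corollary \ref{cor:abeltorsfree}, which in turn is equivalent to the existence of the periodic decomposition \eqref{eq:periodd}, completing the argument. I expect no genuine obstacle beyond carefully verifying the two routine facts once Corollary \ref{cor:abeltorsfree} is granted: that pairwise incommensurability suffices to force $[B_j]=\{0\}$ on every multi-element block, and that the degenerate factor $\Diff_0$ indeed annihilates each non-finest partition condition.
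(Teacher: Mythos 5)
Your proposal is correct and takes essentially the same route as the paper, which obtains this corollary precisely by specializing Corollary \ref{cor:abeltorsfree} to $A=(\RR,+)$: as the paper notes just beforehand, the least common multiple of incommensurable reals is $0$, so every partition with a non-singleton block yields a vacuous condition via $\Diff_0=0$ and only the finest partition, i.e.\ \eqref{eq:kern}, survives. Your write-up merely makes explicit the details the paper leaves to the reader.
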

The above results remain true if one considers functions with values in torsion free  groups $\Gamma$. The proof of the following is the same as for Theorem \ref{th:sufficiency} with the new aspect that taking averages in $\Gamma$ requires some additional care.
\begin{theorem}[B.{} Farkas, V.{} Harangi, T.{} Keleti, Sz.{} R\'ev\'esz \cite{FHKR}]\label{tf2tf}
Let $A, \Gamma$ be torsion free Abelian groups and
$a_1,\dots,a_n\in A$. A function $f:A\to \Gamma$
decomposes into a sum of $a_j$-periodic functions
$f_j:A\to \Gamma$, $f=f_1+\cdots+f_n$ if and only if for
all partitions $B_1\cup B_2\cup\cdots \cup B_N=\{a_1,\dots,a_n\}$
and $b_j$ being the least common multiple of the elements in $B_j$
one has \eqref{eq:b1bNDiffEq}.
\end{theorem}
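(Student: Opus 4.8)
The necessity of \eqref{eq:b1bNDiffEq} is immediate and identical to the real-valued case: if $f=f_1+\cdots+f_n$ with $\Diff_{a_j}f_j=0$, then for a partition $B_1\cup\cdots\cup B_N=\{a_1,\dots,a_n\}$ each $b_j$ is, by the definition of the least common multiple, a multiple of every element of $B_j$, so $\Diff_{b_j}$ annihilates every $f_i$ with $a_i\in B_j$. As each $a_i$ lies in exactly one block and the operators $\Diff_{b_1},\dots,\Diff_{b_N}$ commute, we get $\Diff_{b_1}\cdots\Diff_{b_N}f_i=0$ for every $i$, and summing gives \eqref{eq:b1bNDiffEq}. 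So the whole content is sufficiency, and the plan is to mirror the proof of Theorem~\ref{th:sufficiency} (equivalently Corollary~\ref{cor:abeltorsfree}), watching the points where that argument divides by integers.

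First I would reduce to a single orbit. Using the axiom of choice, split $A$ into cosets of the finitely generated subgroup $H:=\langle a_1,\dots,a_n\rangle$; since $a_j$-periodicity is decided inside each coset, it suffices to build the decomposition on one coset, which we identify with $H$. As $A$ is torsion free, $H\cong\ZZ^d$ is free abelian and the transformations act as the free translations $x\mapsto x+a_j$. I would then induct on $n$. For $n=1$ the Condition forces $\Diff_{a_1}f=0$ and there is nothing to prove. For the step, set $g:=\Diff_{a_n}f$; a short check shows $g$ inherits Condition~\ref{cond:diffmod} for the system $a_1,\dots,a_{n-1}$ (append the singleton block $\{a_n\}$, whose least common multiple is $a_n$, to any partition of $\{a_1,\dots,a_{n-1}\}$ and invoke \eqref{eq:b1bNDiffEq} for the full system). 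The inductive hypothesis yields $g=g_1+\cdots+g_{n-1}$ with $\Diff_{a_j}g_j=0$. It then remains to lift each $g_j$ to some $f_j$ with $\Diff_{a_j}f_j=0$ and $\Diff_{a_n}f_j=g_j$; putting $f_n:=f-(f_1+\cdots+f_{n-1})$ gives $\Diff_{a_n}f_n=g-\sum_j g_j=0$ and hence the desired decomposition.

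The heart of the matter is thus the lift-up, a $\Gamma$-valued version of Lemma~\ref{lem:diffinvsolv} applied to the commuting translations by $a_j$ and by $a_n$. The obstruction lives on the orbits of the $a_n$-translation in the quotient $H/\langle a_j\rangle$: whenever $k\,a_n\in\langle a_j\rangle$, so that these orbits close up into $k$-cycles, solvability of the lift-up forces the cyclic sums $\sum_{i=0}^{k-1}g_j(x+i\,a_n)$ to vanish, exactly as in \eqref{twotransfcondi}. I would derive this vanishing from Condition~\ref{cond:diffmod}, possibly after propagating the Condition through the induction: the relation $k\,a_n\in\langle a_j\rangle$ is precisely a commensurability between $a_n$ and $a_j$, and the corresponding two-block partition, whose least common multiples are $a_j$ and a common multiple of $a_n$ and $a_j$, turns \eqref{eq:b1bNDiffEq} into the required telescoping identity for $g_j$. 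Once the sums vanish, $f_j$ is built on each orbit simply by summing $g_j$ from a chosen base point, a construction that introduces no denominators and therefore stays inside $\Gamma$.

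The one genuinely new point, and the place I expect the real difficulty, is exactly the averaging that the real-valued argument performs on these finite cyclic orbits: there one splits $g_j$ on a $k$-cycle into its invariant mean $\tfrac1k\sum_{i=0}^{k-1}g_j(\cdot+i\,a_n)$ plus a coboundary, and the mean is unavailable in a general group. Two routes present themselves. The clean one uses torsion freeness directly: $\Gamma$ embeds in its rationalization $\Gamma\otimes\QQ$, a $\QQ$-vector space in which all averages make sense, so one may run the averaging argument of Theorem~\ref{th:sufficiency} there to obtain components in $\Gamma\otimes\QQ$, and then prove that the components produced by pure summation (as above) already lie in $\Gamma$, the rational decomposition serving only to certify consistency. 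The more self-contained route, which I would prefer for the write-up, avoids the mean altogether: since the Condition makes every relevant cyclic sum vanish, $g_j$ is on each closed orbit a pure coboundary with no invariant part to extract, so summation needs no division and one never leaves $\Gamma$. The care, then, is to check that every appeal to an average in the model proof corresponds to a cyclic sum that the Condition forces to be zero; torsion freeness of $\Gamma$, through the embedding into $\Gamma\otimes\QQ$, guarantees that the finitely many integer multiples appearing along the way cancel uniquely, so no spurious obstruction survives.
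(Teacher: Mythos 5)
Your skeleton is exactly the route the paper indicates (necessity as in the real-valued case; sufficiency by running the proof of Theorem~\ref{th:sufficiency} orbit by orbit, the single new difficulty being the unavailability of averages in $\Gamma$), so the architecture is right. The gap is in how you dispose of that difficulty. Your preferred ``self-contained route'' asserts that, because Condition~\ref{cond:diffmod} makes ``every relevant cyclic sum vanish,'' each $g_j$ is a pure coboundary and no averaging is ever needed. But Condition~\ref{cond:diffmod} constrains $f$ (hence $g=\Diff_{a_n}f$ and its iterated differences), not the individual components $g_j$ that the inductive hypothesis hands back: an arbitrary decomposition $g=g_1+\cdots+g_{n-1}$ will in general fail the lift-up criterion \eqref{twotransfcondi}. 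Already for $A=\Gamma=\ZZ$, $a_1=2$, $a_2=3$, $a_3=5$, the relevant cyclic sums $g_1(x)+g_1(x+1)$ and $g_2(x)+g_2(x+1)+g_2(x+2)$ are constants $c_1,c_2$ that are nonzero for a generic choice of $g_1,g_2$; what the Condition forces is only the combined relation $3c_1+2c_2=0$ (obtained from $\Delta_{10}\Delta_3f=0$ and $\Delta_{30}f=0$, and even this uses torsion freeness of $\Gamma$ to cancel an integer factor $10$). One must therefore correct the components, $g_1\mapsto g_1+w$, $g_2\mapsto g_2-w$ with $w$ simultaneously periodic, and the existence of the correcting element \emph{in $\Gamma$} rather than in $\Gamma\otimes\QQ$ is exactly the point: here $w\equiv c_1+c_2$ works because $\gcd(2,3)=1$ turns the needed divisions into a B\'ezout combination, and in general this divisibility/redistribution argument is the ``additional care'' the paper alludes to. It is absent from your write-up, which only says the vanishing should follow ``possibly after propagating the Condition through the induction.''

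Your first route ($\Gamma\otimes\QQ$) is the correct framework---torsion freeness embeds $\Gamma$ into a $\QQ$-vector space where the averaging proof of Theorem~\ref{th:sufficiency} runs verbatim---but the return trip is not automatic: the components produced there genuinely involve the invariant means over finite cycles, so you must prove that after redistributing the invariant parts among the summands everything lands back in $\Gamma$. ``Pure summation introduces no denominators'' is true of the lift-up itself but not of the preceding correction step. So the proposal identifies the right strategy and even the right danger point, but does not actually close it.
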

Let $A$ be a torsion free Abelian group. By applying the
previous theorem for $\Gamma = \RR$ and  for $\Gamma = \ZZ$, we obtain
that for a function $f : A \to \ZZ$ the existence of a real-valued
periodic decomposition and the existence of an integer-valued
periodic decomposition are both equivalent to the same difference equation type condition.
\begin{corollary}\label{cor:integer}
If an integer-valued function $f$ on a torsion free Abelian group
$A$ decomposes into the sum of $a_j$-periodic real-valued
functions for some $a_1,\dots,a_n$, then $f$
also decomposes into the sum of $a_j$-periodic integer-valued
functions.
\end{corollary}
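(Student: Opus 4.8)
The plan is to reduce the whole statement to Theorem \ref{tf2tf}, exploiting the key structural feature of that theorem: the characterizing difference-equation condition \eqref{eq:b1bNDiffEq} depends only on $f$ and on the elements $a_1,\dots,a_n$, and not at all on the target group $\Gamma$. First I would record that both $\RR$ and $\ZZ$ are torsion free Abelian groups, so Theorem \ref{tf2tf} is available for functions $A\to\RR$ and for functions $A\to\ZZ$ alike, and moreover with the \emph{same} least common multiples $b_j\in A$ attached to each partition block $B_j$, since these $b_j$ are computed inside $A$ and are insensitive to the codomain.

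Starting from the hypothesis that the integer-valued $f$ admits a real-valued periodic decomposition $f=f_1+\cdots+f_n$ with each $f_j:A\to\RR$ being $a_j$-periodic, I would apply the necessity (``only if'') direction of Theorem \ref{tf2tf} with $\Gamma=\RR$. This yields that for every partition $B_1\cup\cdots\cup B_N=\{a_1,\dots,a_n\}$, with $b_j$ the least common multiple of the elements of $B_j$, the identity $\Diff_{b_1}\cdots\Diff_{b_N}f=0$ holds as an equation in $\RR$; that is, \eqref{eq:b1bNDiffEq} is satisfied.

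The one point that genuinely needs a word is the passage of this condition from $\RR$ to $\ZZ$, and this is where the integrality of $f$ is used. Each value $\Diff_{b_1}\cdots\Diff_{b_N}f(x)$ is a fixed $\ZZ$-linear combination (with coefficients $\pm1$) of values of $f$, hence an integer, because $f$ is integer-valued; and an integer that vanishes in $\RR$ vanishes in $\ZZ$. Thus the very same family of equations \eqref{eq:b1bNDiffEq} holds for $f$ regarded as a map $A\to\ZZ$. Equivalently, the inclusion $\ZZ\hookrightarrow\RR$ is an injective group homomorphism commuting with all the operators $\Diff_{b_j}$, so it neither creates nor destroys solutions of \eqref{eq:b1bNDiffEq}.

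Finally I would invoke the sufficiency (``if'') direction of Theorem \ref{tf2tf}, this time with $\Gamma=\ZZ$: since $f:A\to\ZZ$ satisfies \eqref{eq:b1bNDiffEq} for all partitions, it admits an integer-valued periodic decomposition $f=g_1+\cdots+g_n$ with each $g_j:A\to\ZZ$ being $a_j$-periodic, which is exactly the assertion \eqref{invaridecdef} in the integer-valued category. There is essentially no hidden obstacle here, as Theorem \ref{tf2tf} carries all the weight; the only thing one must be careful about is that the two applications of that theorem use identical data (the group $A$, the elements $a_j$, and hence the $b_j$), so that the real-valued and integer-valued decomposition problems are governed by literally the same group-independent difference-equation condition.
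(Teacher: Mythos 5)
Your argument is correct and is exactly the paper's own proof: the corollary is obtained by applying Theorem \ref{tf2tf} once with $\Gamma=\RR$ and once with $\Gamma=\ZZ$, observing that both decomposition problems are governed by the identical condition \eqref{eq:b1bNDiffEq}, which for an integer-valued $f$ holds over $\RR$ if and only if it holds over $\ZZ$. Your write-up merely makes explicit the (easy) transfer step that the paper leaves implicit.
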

There are examples showing that one cannot get rid of the torsion freeness of $A$ in Corollary \ref{cor:integer} or Theorem \ref{tf2tf}, see \cite{FHKR}.%%%\rev{Mention crystallography.}

Note that in crystallography and other applications, reconstruction or at least unique identification of integer-valued functions or characteristic functions of sets from various (partial) information concerning their Fourier transform are rather important. This also motivates the interest of integer-valued periodic decompositions or decompositions with values within a subgroup. In turn, support of a Fourier transform can reveal the existence of a periodic decomposition, see e.g. \cite[2.7 and 2.8]{KJ}, or the analogous idea of the proof for Proposition \ref{p:AP}. For more about this see \cite{KJ} and the references therein.

\section{Actions of semigroups}\label{sec:sgrpact}
Let $X$ be a non-empty set and let $T:X\to X$ be an arbitrary mapping.
If a function $f:X\to \RR$ is invariant under $T$, i.e., $\Diff_T f=0$, then it is evidently invariant under each iterate $T^n$ of $T$ for $n\in \NN$. Given commuting mappings $T_1,\dots, T_n:X\to X$ consider the generated semigroups
\begin{equation}\label{eq:cyclic}
{S}_j:=\bigl\{T_j^n:n\in \NN\bigr\}.
\end{equation}
The corresponding semigroup of the Koopman operators on $\RR^X$ is denoted by $\mathscr{S}_j$.
(Recall that we use the same symbol $T$ for the  Koopman operator of $T\in X^X$.)
For a subset $\mathscr{A}$ of linear operators on $\RR^X$ we introduce the notations $\ker \mathscr{A}:=\bigcap_{A\in \mathscr{A}}\ker A$.
Then the equality
\begin{equation}\label{eq:kerT2}
\ker(T_1-\Id)\cdots (T_n-\Id)=\ker(T_1-\Id)+\cdots+\ker (T_n-\Id)
\end{equation}
is easily seen to be equivalent to
\begin{equation}\label{eq:Ssgrpdecomp}
\ker(\mathscr{S}_1-\Id)\cdots (\mathscr{S}_n-\Id)=\ker(\mathscr{S}_1-\Id)+\cdots+\ker (\mathscr{S}_n-\Id).
\end{equation}
In what follows we study this equality when $\mathscr{S}_j$ are general, not necessarily cyclic, semigroups.

\medskip Let $S$ be a discrete semigroup with unit element, and let $S_j$, $j=1,\dots, n$ unital subsemigroups of $S$ that all act on the non-empty set $X$ (from the left), the unit acting as the identity. Suppose furthermore $st=ts$ for all $s\in S_j$ and $t\in S_i$ with $i\neq j$ (the actions of different $S_j$s are commuting).

\begin{theorem}[B.{} Farkas \cite{Fa12a}]\label{thm:Sdecomp}
Suppose that for $j=1,\dots,n$ the unital semigroups $S_j$ on the set $X$ are (right-)amenable and that the actions of the different $S_j$ are commuting. Denote by $\mathscr{S}_j$ the semigroups of the Koopman operators.
%%%%\begin{enumerate}[a)]
%%%%\item
Then \eqref{eq:Ssgrpdecomp} holds in the space $\Bb(X)$.
%%%\item Suppose

Furthermore, if $X$ is uniform (topological) space and the action of $S_j$ on $X$ is uniformly equicontinuous, then \eqref{eq:Ssgrpdecomp} holds in the space $\BUC(X)$.
%%%%\end{enumerate}
\end{theorem}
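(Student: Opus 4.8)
The plan is to imitate the proof of Proposition \ref{p:meanerg}, replacing the mean ergodic projections by projections manufactured from invariant means. Since each $S_j$ is right-amenable, fix a right-invariant mean $m_j$ on the bounded functions over $S_j$ and define, for $f\in\Bb(X)$ and $x\in X$,
\[
(P_jf)(x):=m_j\bigl(s\mapsto f(sx)\bigr),
\]
where $s$ ranges over $S_j$ and $f(sx)=(sf)(x)$ is the value of the Koopman image. First I would record the three properties of $P_j$ that the argument needs: it is linear with $\|P_j\|\le1$; its range is exactly $\ker(\mathscr{S}_j-\Id)$, the space of $S_j$-invariant functions; and it commutes with every Koopman operator coming from $S_i$ with $i\neq j$. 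The range identity splits into two halves: the invariance of $m_j$ gives that $P_jf$ is $S_j$-invariant, so $\ran P_j\subseteq\ker(\mathscr{S}_j-\Id)$, while for an already $S_j$-invariant $f$ the function $s\mapsto f(sx)$ is the constant $f(x)$, whence $P_jf=f$; together these also yield $P_j^2=P_j$. The cross-commutation is immediate from the hypothesis that the actions of $S_i$ and $S_j$ commute: for $s\in S_j$ and $i\neq j$ one has $f(s(tx))=f(t(sx))$ for $t\in S_i$, so averaging over $t$ shows $P_i(sf)=s(P_if)$.

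With these $P_j$ in hand I would run an \emph{ordered} descent that lowers the order of the difference equation one factor at a time; the inclusion ``$\supseteq$'' in \eqref{eq:Ssgrpdecomp} being trivial, only ``$\subseteq$'' is at stake. Let $f$ lie in the left-hand side, i.e.\ $(s_1-\Id)\cdots(s_n-\Id)f=0$ for every choice $s_j\in S_j$. Put $g_0:=f$ and $g_k:=(\Id-P_k)g_{k-1}$ for $k=1,\dots,n$. The key claim, proved by induction on $k$, is that $g_k$ satisfies $(s_{k+1}-\Id)\cdots(s_n-\Id)g_k=0$ for all $s_{k+1},\dots,s_n$. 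Indeed, fixing $s_{k+1},\dots,s_n$ and setting $h:=(s_{k+1}-\Id)\cdots(s_n-\Id)g_{k-1}$, the inductive hypothesis gives $(s_k-\Id)h=0$ for every $s_k\in S_k$, so $h\in\ker(\mathscr{S}_k-\Id)=\ran P_k$ and therefore $(\Id-P_k)h=0$; moving $(\Id-P_k)$ past the remaining factors $(s_{k+1}-\Id),\dots,(s_n-\Id)$ by the cross-commutation yields exactly the claim for $g_k$. At $k=n$ the claim reads $g_n=(\Id-P_n)g_{n-1}=0$. Telescoping $g_{k-1}=P_kg_{k-1}+g_k$ then gives $f=\sum_{k=1}^nP_kg_{k-1}$, a decomposition with $f_k:=P_kg_{k-1}\in\ran P_k=\ker(\mathscr{S}_k-\Id)$, which is \eqref{eq:Ssgrpdecomp}.

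I want to stress what makes this route work and where the one real subtlety lies, which is the step I expect to be the main obstacle. A naive imitation of Proposition \ref{p:meanerg} would want the projections $P_1,\dots,P_n$ to commute with one another, but commutativity of $P_i$ and $P_j$ amounts to an interchange $m_i^tm_j^s=m_j^sm_i^t$ of two invariant means, a Fubini-type identity that is \emph{false} for general finitely additive means. The point of carrying out the descent in a fixed order---peeling off $(\Id-P_k)$ from the left while the difference factors sit to the right---is precisely that it never requires the $P_j$ to commute among themselves; it uses only that each $P_j$ is a projection onto the $S_j$-invariants and that it commutes with the \emph{other} actions, both of which are legitimate. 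Getting the one-sided invariance of the mean to line up with these two requirements simultaneously is the delicate bookkeeping.

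Finally, for the second assertion I would check that each $P_j$ maps $\BUC(X)$ into itself, after which the identical descent produces a decomposition inside $\BUC(X)$. Here the uniform equicontinuity of the $S_j$-action yields, for each $f\in\BUC(X)$, a modulus $\omega$ with $|f(tx)-f(ty)|\le\omega(d(x,y))$ uniformly in $t\in S_j$; since $\|m_j\|\le1$, this gives $|(P_jf)(x)-(P_jf)(y)|\le\omega(d(x,y))$, so $P_jf$ is again bounded and uniformly continuous. No other change is needed, the remainder of the argument being purely algebraic.
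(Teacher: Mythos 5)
Your proof is correct, and it follows precisely the route the paper indicates for this theorem (which it does not prove in the text, deferring to \cite{Fa12a}): projections built from right-invariant means, generalizing Gajda's Banach-limit argument \cite{gajda:1992}, combined with the ordered telescoping already used in the proof of Proposition \ref{p:meanerg}. Your observation that the ordered descent only needs each $P_j$ to be a projection onto the $S_j$-invariants commuting with the \emph{other} actions, and never needs $P_iP_j=P_jP_i$, is exactly the right point of care.
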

This result and its proof generalizes those of Gajda \cite{gajda:1992}, who used Banach limits (i.e., amenability of $\ZZ$ or $\NN$) to establish the above for $\ZZ$ and $\NN$ actions, i.e., for semigroups as in \eqref{eq:cyclic}. The next consequence immediately follows.
\begin{corollary}[Z.{} Gajda \cite{gajda:1992}]\label{cor:agrpdecomp}
Let $A$ be a locally compact Abelian group, and let $a_1,\dots, a_n\in A$. Then \eqref{eq:kerT2} holds in $\BUC(\RR)$ for $T_j$ being the shift operator by $a_j$. In particular $\BUC(\RR)$ has the decompositon property.
\end{corollary}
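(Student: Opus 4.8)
The plan is to obtain the corollary as a direct specialization of Theorem~\ref{thm:Sdecomp}. I take $X=A$ equipped with its canonical (translation invariant) uniform structure coming from the group operation, and for each $j=1,\dots,n$ I let $S_j:=\{a_j^m:m\in\NN\}$ be the cyclic unital subsemigroup of $A$ generated by $a_j$, acting on $X=A$ by translation $x\mapsto a_j^m x$. The associated semigroups $\mathscr{S}_j$ of Koopman operators are then exactly those generated by the shifts $T_j$. With these choices the conclusion I want in $\BUC(A)$ is precisely \eqref{eq:Ssgrpdecomp}, so it suffices to verify that the hypotheses of Theorem~\ref{thm:Sdecomp} hold in this setting.

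First I check the algebraic hypotheses. Each $S_j$ is a commutative semigroup, and every commutative semigroup is (right-)amenable; this classical fact is exactly the abstract content underlying Gajda's original use of Banach limits. Moreover, since $A$ is Abelian the actions of the different $S_j$ commute, $a_i^k(a_j^m x)=a_j^m(a_i^k x)$, so that the operator families $\mathscr{S}_j$ consist of pairwise commuting operators, as required.

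Next I verify the topological hypothesis needed for the second (i.e.\ $\BUC$) assertion of Theorem~\ref{thm:Sdecomp}, namely uniform equicontinuity of each action. Here commutativity pays off cleanly: the uniformity of $A$ is translation invariant, and for an Abelian group the left and right uniformities coincide, so a pair $(a_j^m x,a_j^m y)$ belongs to a given entourage exactly when $(x,y)$ does, uniformly in $m$. Hence the orbit maps of $S_j$ are uniformly equicontinuous, each $T_j$ preserves $\BUC(A)$ and acts as an isometry there, and the second part of Theorem~\ref{thm:Sdecomp} yields \eqref{eq:Ssgrpdecomp} in $\BUC(A)$.

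Finally I transcribe \eqref{eq:Ssgrpdecomp} back into \eqref{eq:kerT2}. Since a bounded function is fixed by every power $T_j^m$ precisely when it is fixed by $T_j$, one has $\ker(\mathscr{S}_j-\Id)=\ker(T_j-\Id)$; and on the product side, taking $m_j=1$ gives the basic difference equation while \eqref{eqdifferencekj} gives the converse, so the two sides of \eqref{eq:Ssgrpdecomp} coincide with those of \eqref{eq:kerT2}. Specializing to $A=\RR$, $a_j=\alpha_j$, the kernels $\ker(T_{\alpha_j}-\Id)$ are exactly the $\alpha_j$-periodic functions in $\BUC(\RR)$, so \eqref{eq:kerT2} is the $n$-decomposition property and, letting $n$ vary, the decomposition property of $\BUC(\RR)$ follows. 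I expect the only genuinely delicate point to be the uniform equicontinuity verification, which is where the Abelian (hence uniformity-preserving) hypothesis is essential; the amenability of commutative semigroups is the single external fact invoked, and everything else is bookkeeping that reduces the statement to Theorem~\ref{thm:Sdecomp}.
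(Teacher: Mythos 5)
Your proposal is correct and follows exactly the route the paper intends: the corollary is stated as an immediate specialization of Theorem~\ref{thm:Sdecomp}, and you have simply made explicit the verifications (amenability of commutative semigroups, commutation of the actions, uniform equicontinuity of translations for the translation-invariant uniformity, and the passage between \eqref{eq:Ssgrpdecomp} and \eqref{eq:kerT2}) that the paper leaves to the reader. Nothing is missing.
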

Let us finally return to the purely linear operator setting on an arbitrary Banach space $E$. A subsemigroup $\mathscr{S}\subseteq \LLL(E)$ of bounded linear operators is  called \emph{mean ergodic} if the closed convex hull $\clconv({\mathscr{S}})\subseteq \LLL(E)$ contains a zero element $P$, i.e., $PT=P=TP$ for every $T\in \mathscr{S}$. In this case $P$ is a projection, called the \emph{mean ergodic projection} of $\mathscr{S}$, and it holds  (see \cite{Nagel73})
\begin{equation*}
E=\ran P\oplus \ran(\Id-P)\quad\mbox{with}\quad \ran P=\ker(\mathscr{S}-\Id)~.
\end{equation*}
\begin{theorem}[B.{} Farkas \cite{Fa12a}]
Let $\mathscr{S}_1,\mathscr{S}_2,\dots,\mathscr{S}_n\subseteq \LLL(E)$
be mean ergodic operator {semigroups} and suppose that $ST=TS$
whenever $T\in \mathscr{S}_i$, $S\in \mathscr{S}_j$ with $i\neq j$. Then \eqref{eq:Ssgrpdecomp} holds.
\end{theorem}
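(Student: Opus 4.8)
The plan is to run exactly the argument of Proposition~\ref{p:meanerg}, with each single mean ergodic operator replaced by the mean ergodic projection of the corresponding semigroup. For each $j$ let $P_j\in\clconv(\mathscr{S}_j)$ be the zero element furnished by mean ergodicity, so that $P_j$ is a projection satisfying $P_jS=SP_j=P_j$ for all $S\in\mathscr{S}_j$, with $\ran P_j=\ker(\mathscr{S}_j-\Id)$ and $E=\ran P_j\oplus\ker P_j$, where $\ker P_j=\ran(\Id-P_j)$. Writing $B_j:=\Id-P_j$, the identity telescopes into the direct decomposition
\begin{equation*}
E=\ran P_1\oplus\ran(P_2B_1)\oplus\cdots\oplus\ran(P_nB_{n-1}\cdots B_1)\oplus\ran(B_n\cdots B_1),
\end{equation*}
valid once the $P_j$ are known to commute; each of the first $n$ summands lies in some $\ran P_k=\ker(\mathscr{S}_k-\Id)$, and the last is $\ran Q$ with $Q:=B_1\cdots B_n=(\Id-P_1)\cdots(\Id-P_n)$.

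First I would settle the commutativity. Since $P_j$ is a strong-operator limit of convex combinations of elements of $\mathscr{S}_j$, and every $S\in\mathscr{S}_i$ with $i\neq j$ commutes with all of $\mathscr{S}_j$ by hypothesis, boundedness of $S$ lets one pass the relation $S(\cdot)=(\cdot)S$ through convex combinations and strong limits to obtain $SP_j=P_jS$. Applying this once more, now with $P_i\in\clconv(\mathscr{S}_i)$ in the role of $S$, yields $P_iP_j=P_jP_i$; together with the zero-element relations $P_jS=SP_j=P_j$ for $S\in\mathscr{S}_j$ this makes the projections $P_1,\dots,P_n$ (hence the $B_j$) commute with one another and each commute with every $S\in\mathscr{S}_i$, $i=1,\dots,n$. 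Given this, for $x$ in the left-hand side of~\eqref{eq:Ssgrpdecomp} I write $x=x_1+\cdots+x_n+y$ along the above decomposition, with $x_k\in\ker(\mathscr{S}_k-\Id)$ and $y\in\ran Q$. Because each factor $(S_k-\Id)$ annihilates $x_k$ and the factors belong to distinct commuting semigroups, $(S_1-\Id)\cdots(S_n-\Id)x=(S_1-\Id)\cdots(S_n-\Id)y$ for every choice $S_j\in\mathscr{S}_j$; hence $y$ again lies in $\ker[(\mathscr{S}_1-\Id)\cdots(\mathscr{S}_n-\Id)]$.

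The crux, and the place where the passage from single operators to whole semigroups really bites, is the inclusion $\ker[(\mathscr{S}_1-\Id)\cdots(\mathscr{S}_n-\Id)]\subseteq\ker Q$. In the one-operator situation this followed from the per-operator identity $\ker(T_j-\Id)=\ker B_j$; here that identity survives only at the level of the full semigroup, namely $\ker(\mathscr{S}_j-\Id)=\ran P_j=\ker B_j$, so I would instead argue by induction on $n$. The case $n=1$ is exactly this identity. For the step, take $y$ with $(S_1-\Id)\cdots(S_n-\Id)y=0$ for all $S_j$; fixing $S_n$, the element $(S_n-\Id)y$ lies in $\ker[(\mathscr{S}_1-\Id)\cdots(\mathscr{S}_{n-1}-\Id)]$, so by the inductive hypothesis $B_1\cdots B_{n-1}(S_n-\Id)y=0$, and commuting the factors gives $(S_n-\Id)(B_1\cdots B_{n-1}y)=0$. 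As $S_n\in\mathscr{S}_n$ was arbitrary, $z:=B_1\cdots B_{n-1}y\in\bigcap_{S\in\mathscr{S}_n}\ker(S-\Id)=\ker(\mathscr{S}_n-\Id)=\ker B_n$, whence $B_nz=0$, i.e.\ $Qy=0$. With the inclusion in hand, $y\in\ran Q\cap\ker Q=\{0\}$ since $Q$ is a projection, so $x=x_1+\cdots+x_n\in\ker(\mathscr{S}_1-\Id)+\cdots+\ker(\mathscr{S}_n-\Id)$. The reverse inclusion in~\eqref{eq:Ssgrpdecomp} is immediate from commutativity. I expect the commutation bookkeeping of the previous paragraph and this inductive inclusion to be the only genuinely delicate points; everything else is the formal skeleton of Proposition~\ref{p:meanerg}.
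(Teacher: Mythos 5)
Your argument is correct and is exactly the route the paper intends: the paper gives no in-text proof of this theorem (deferring to \cite{Fa12a}), but presents Proposition~\ref{p:meanerg} as the model, and your proof is precisely that argument with each $T_j$ replaced by the semigroup $\mathscr{S}_j$ and its mean ergodic projection $P_j$. Your inductive verification of the inclusion $\ker[(\mathscr{S}_1-\Id)\cdots(\mathscr{S}_n-\Id)]\subseteq\ker\bigl((\Id-P_1)\cdots(\Id-P_n)\bigr)$ and the commutation bookkeeping (passing $SP_j=P_jS$ through convex combinations and operator-topology limits) supply details the paper leaves implicit even in the single-operator case, and both are sound.
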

Since an operator $T$ is mean ergodic if and only if the semigroup $\{T^n:n\in \NN\}$ is mean ergodic, the previous result contains Proposition \ref{p:meanerg}.  Moreover, the obvious modification of Theorem \ref{th:topvsp} (using fixed points in the closed convex hull) for this semigroup setting is easily proved, but this we will not pursue here. Furthermore, the analogue of Corollary \ref{c:vectortopology} can be formulated for amenable semigroups instead of cyclic ones, where of course one applies Day's fixed point theorem, see \cite{Day} instead of the one of Markov and Kakutani.
\begin{problem}
Does the space $\Cb(A)$ of bounded and continuous functions, where $A$ is a locally compact Abelian group,  has the decomposition property  with respect to translations? If $A$ is compact or discrete or $A=\RR$, this is so by the previous results. What about $A=\RR^2$?
\end{problem}

\section{Further results}
We briefly touch upon topics that, regrettably, %%%due to space limitation
could not be covered in detail.

First we take a second glimpse at the original problem.

\begin{theorem}[T.{} Natkaniec, W.{} Wilczy\'nski \cite{Natkaniec}]
Let $\alpha_1,\dots, \alpha_n\in \RR\setminus\{0\}$ be incommensurable. A function $f:\RR\to\RR$ has a decomposition \eqref{eq:periodd} with $f_1,\dots, f_n$ Darboux functions if and only if \eqref{eq:kern} holds.
\end{theorem}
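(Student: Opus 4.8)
The necessity of \eqref{eq:kern} is immediate and uses nothing about the Darboux property: if $f=f_1+\dots+f_n$ with $\Diff_{\alpha_j}f_j=0$, then, the difference operators commuting, $\Diff_{\alpha_1}\dots\Diff_{\alpha_n}f=\sum_{j}\Diff_{\alpha_1}\dots\Diff_{\alpha_n}f_j=0$, each summand carrying the annihilating factor $\Diff_{\alpha_j}$. The whole content is thus the sufficiency, and the plan is to manufacture a periodic decomposition whose terms are not merely Darboux but \emph{strongly Darboux}, i.e.\ attain every real value on every nonempty open interval; such functions are a fortiori Darboux. Since $\alpha_1,\dots,\alpha_n$ are incommensurable, the block difference conditions of Corollary~\ref{cor:abeltorsfree} (applied to the torsion-free Abelian group $A=\RR$) collapse to \eqref{eq:kern} itself, so that corollary already furnishes \emph{some} real-valued decomposition $f=g_1+\dots+g_n$ with $\Diff_{\alpha_j}g_j=0$. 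The task is then to correct the $g_j$ into strongly Darboux functions without destroying periodicity or the sum.

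Write $L:=\alpha_1\ZZ+\dots+\alpha_n\ZZ$, a countable subgroup of $\RR$. Since it contains the incommensurable pair $\alpha_1,\alpha_2$ it is not cyclic, hence dense; its cosets are the orbits of the group generated by the translations $T_{\alpha_1},\dots,T_{\alpha_n}$, and they partition $\RR$ into $\kappa:=|\RR|$ pairwise disjoint, countable, \emph{dense} sets. I will seek the decomposition in the form $f_j:=g_j+c_j$, where each $c_j:\RR\to\RR$ is constant on every coset of $L$ and $\sum_{j=1}^n c_j\equiv0$. Being $L$-invariant, every such $c_j$ is in particular $\alpha_j$-periodic, so each $f_j$ remains $\alpha_j$-periodic, while $\sum_j f_j=\sum_j g_j=f$ automatically. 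The freedom I exploit is that, on any single coset $O$, adding a constant $\delta$ to $g_j$ and subtracting it from $g_{j'}$ (for one fixed $j'\neq j$) is an admissible, sum-preserving, coset-constant correction; choosing $\delta=y-g_j(x)$ for a prescribed point $x\in O$ and target $y$ forces $f_j(x)=y$.

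It remains to choose the coset-constants so that each $f_j$ becomes strongly Darboux. Call a \emph{task} a triple $(j,I,y)$ with $j\in\{1,\dots,n\}$, $I$ an interval with rational endpoints, and $y\in\RR$; there are $n\cdot\aleph_0\cdot\kappa=\kappa$ tasks, which I enumerate as $(j_\xi,I_\xi,y_\xi)$, $\xi<\kappa$, indexing by the initial ordinal of cardinality $\kappa$ (so $|\xi|<\kappa$ at each stage). I process them by transfinite recursion, dedicating a fresh coset to each. At stage $\xi$ fewer than $\kappa$ cosets have been used, and there are $\kappa$ cosets in all, so an unused coset $O_\xi$ remains; as $O_\xi$ is dense it meets $I_\xi$, and I pick $x_\xi\in O_\xi\cap I_\xi$. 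I then put $c_{j_\xi}\equiv y_\xi-g_{j_\xi}(x_\xi)$ and $c_{j'}\equiv g_{j_\xi}(x_\xi)-y_\xi$ on $O_\xi$ (for a fixed $j'\neq j_\xi$), with $c_i\equiv0$ there for the remaining indices; this yields $\sum_i c_i\equiv0$ on $O_\xi$ and $f_{j_\xi}(x_\xi)=y_\xi$. On every coset used by no task I set all $c_i\equiv0$. Since each coset carries at most one correction, no conflict ever arises and all defining constraints hold coset by coset.

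Finally, for each $j$, each rational interval $I$ and each $y\in\RR$ the task $(j,I,y)$ was handled, so $f_j$ attains $y$ somewhere in $I$; as every nonempty open interval contains a rational one, each $f_j$ attains every real value on every interval and is therefore strongly Darboux, which completes the required decomposition. I expect the one genuinely delicate point to be the transfinite bookkeeping: one must guarantee a fresh, interval-meeting coset at every stage, which is exactly what the density of $L$ together with the equality ``number of cosets $=$ number of tasks $=\kappa$'' secures. Once the \emph{single-point, coset-constant} freedom of the second paragraph is isolated, the apparent interference between the simultaneous Darboux requirements on $f_1,\dots,f_n$ disappears, because distinct tasks are serviced on distinct cosets.
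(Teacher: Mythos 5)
Your proof is correct, and since the survey does not reproduce a proof of this theorem (it refers the reader to \cite{Natkaniec} for it), there is nothing in the paper to compare against step by step; your route --- first invoke the existence of \emph{some} real-valued decomposition for incommensurable periods (the Mortola--Peirone/Farkas--R\'ev\'esz corollary, whose block conditions indeed collapse to \eqref{eq:kern} because the least common multiple of any block of size at least two is $0$), then repair the summands into strongly Darboux functions by a transfinite recursion that spends one of the $2^{\aleph_0}$ dense cosets of the countable dense group $\alpha_1\ZZ+\cdots+\alpha_n\ZZ$ on each task $(j,I,y)$, using sum-preserving, coset-constant (hence $\alpha_j$-periodic) corrections --- is the natural one and is essentially the strategy of the original paper. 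The only point worth making explicit is that both the statement and your argument presuppose $n\ge 2$: the density of $L$ and the existence of the compensating index $j'\neq j_\xi$ each require a second period, and for $n=1$ the assertion would in fact be false (an $\alpha_1$-periodic function need not be Darboux).
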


See \cite{Natkaniec} for the proof where also the decompositon property of Marczewski measurable functions is studied for incommensurable periods. It is also shown that the identity is not the sum of periodic functions having the Baire property.
For classes of measurable real functions we have, e.g., the following.
\begin{theorem}[T.{} Keleti \cite{KeletiMeasInt}] \label{t:nomeasdec}None of the following classes $\FC$ have the decomposition property:
\begin{enumerate}[a)]
\item $\FC=\{f:\mbox{ $f:\RR\to \ZZ$, $f\in\Ell^\infty(\RR)$}\}$,
\item $\FC=\{f:\mbox{ $f:\RR\to \ZZ$ is bounded measurable}\}$,
\item  $\FC=\{f:\mbox{ $f:\RR\to \RR$ is a.e.{} integer-valued and $f\in\Ell^\infty(\RR)$}\}$,
\item $\FC=\{f:\mbox{ $f:\RR\to \ZZ$ is a.e.{} integer-valued, bounded and measurable}\}$.
\end{enumerate}
\end{theorem}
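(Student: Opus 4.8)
The plan is to \emph{disprove} the decomposition property by exhibiting, for a single well-chosen system of periods, one function lying in all four classes that solves the difference equation \eqref{eq:kern} but admits no periodic decomposition inside the class. The guiding tension is the following. On the one hand $\Ell^\infty(\RR)$ has the decomposition property (a special case of Theorem \ref{thm:loca}), so any $f\in\Ell^\infty(\RR)$ with $\Diff_{\alpha_1}\cdots\Diff_{\alpha_n}f=0$ possesses a bounded \emph{measurable, real-valued} decomposition $f=g_1+\cdots+g_n$. On the other hand, by Corollary \ref{cor:integer} (through Theorem \ref{tf2tf}), an integer-valued $f$ admitting a real decomposition also admits an \emph{integer-valued} one --- but that one is produced by the axiom of choice and is typically non-measurable. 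The entire content of the theorem is therefore the impossibility of a decomposition that is \emph{simultaneously measurable and (a.e.) integer-valued}, and the example must be engineered to exploit exactly this gap.

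First I would locate where the phenomenon can occur by ruling out the low-order cases. For $n=2$ and any periods $\alpha_1,\alpha_2$ the reconciliation always succeeds: from $f=g_1+g_2$ and $f\in\ZZ$ one gets $\Diff_{\alpha_2}g_1=\Diff_{\alpha_2}f\in\ZZ$ a.e., so $\{g_1\}$ is invariant under $\alpha_2$-translation; being also $\alpha_1$-periodic, it coincides a.e.\ with an $\alpha_1$-periodic $[0,1)$-valued function (by ergodicity of the translation action when $\alpha_1,\alpha_2$ are incommensurable, and directly otherwise), and subtracting it turns $g_1$ into an integer-valued part. Hence the smallest interesting case is $n=3$, and this is where I would build the example, with pairwise incommensurable $\alpha_1,\alpha_2,\alpha_3$ chosen so that the analogous correction is obstructed. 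Concretely I would seek $f$ of the special form $f=r_1+r_2+r_3$, where each $r_j:\RR\to[0,1)$ is measurable and $\alpha_j$-periodic, and the three are \emph{coupled} so that $r_1+r_2+r_3\in\ZZ$ (hence with values in $\{0,1,2\}$) a.e. Such an $f$ is automatically bounded, measurable, everywhere integer-valued, and --- having the real decomposition $g_j=r_j$ --- solves \eqref{eq:kern}; thus it lies in every class a)--d). The decisive feature to arrange is that no \emph{measurable} integer-valued periodic decomposition exists: if $f=f_1+f_2+f_3$ with $f_j$ measurable, $\alpha_j$-periodic and integer-valued, then $h_j:=f_j-r_j$ would be measurable, $\alpha_j$-periodic, satisfy $h_1+h_2+h_3=0$, and have prescribed fractional parts $\{h_j\}=\{-r_j\}$; the $r_j$ must be designed precisely so that this system has no measurable solution.

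The main obstacle is exactly this last non-existence statement, which is the heart of the theorem. Unlike the case $n=2$, for three incommensurable periods $\{g_1\}$ is no longer forced to be periodic (only $\Diff_{\alpha_2}\Diff_{\alpha_3}r_1\in\ZZ$ is available), so the simple ergodic argument collapses and a genuine measure-theoretic obstruction must be both created and detected. I would attack it by transferring to the torus $\TT_1\times\TT_2\times\TT_3$ along the dense Kronecker winding line $x\mapsto(x\bmod\alpha_1,\,x\bmod\alpha_2,\,x\bmod\alpha_3)$: the $r_j$ become coordinate functions, and a measurable correction $(h_1,h_2,h_3)$ would amount to a measurable solution of a cohomological (coboundary) equation for the associated $\ZZ^3$-action. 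The construction is then tuned so that the relevant integer-valued cocycle is \emph{not} a measurable coboundary, detected by an invariant --- a mean-value/equidistribution quantity, or a parity/winding invariant --- that survives passage to the limit and that any measurable decomposition would be forced to annihilate. Carrying out this verification, which is combinatorial and ergodic in nature, is the crux and the step where the real work lies.

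Finally, I would record that this single $f$ settles all four classes simultaneously. Since the non-existence is established in its strongest form --- forbidding even a.e.-integer-valued pieces and a.e.\ equality --- it a fortiori forbids everywhere integer-valued, genuinely bounded periodic decompositions; and because $f$ is itself everywhere $\ZZ$-valued and bounded, it is a legitimate member of each of a), b), c) and d). Therefore none of the four classes has the decomposition property.
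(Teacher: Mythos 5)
The survey itself gives no proof of this theorem; it only cites Keleti's paper, so there is nothing in the text to compare against line by line. That said, your overall strategy is the right one and is essentially the one Keleti follows: exploit the gap between the existence of a real-valued bounded measurable decomposition (guaranteed since $\Ell^\infty(\RR)$ has the decomposition property) and the non-existence of a decomposition that is simultaneously measurable and (a.e.) integer-valued; observe that $n=2$ always reconciles (your ergodicity argument for the fractional part $\{g_1\}$ is sound); and build a three-period counterexample of the form $f=r_1+r_2+r_3$ with $r_j$ measurable, $\alpha_j$-periodic, $[0,1)$-valued and summing to an integer --- concretely one can take $r_j(x)=\{\beta_j x\}$ with $\beta_3=-(\beta_1+\beta_2)$, so that $f(x)=-\lfloor\beta_1x\rfloor-\lfloor\beta_2x\rfloor-\lfloor-(\beta_1+\beta_2)x\rfloor$ is bounded, integer-valued, and lies in all four classes.

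The genuine gap is that the heart of the theorem --- proving that for a suitable choice of $\beta_1,\beta_2$ no measurable, a.e.\ integer-valued, essentially bounded periodic decomposition of this $f$ exists --- is not carried out. You correctly reduce it to showing that a certain integer-valued cocycle over the Kronecker system on $\TT^3$ is not a measurable coboundary, and you correctly identify this as ``the step where the real work lies,'' but you neither specify the periods (the obstruction genuinely depends on Diophantine properties of $\beta_1,\beta_2$; it is not automatic for arbitrary incommensurable choices), nor exhibit the invariant (``a mean-value/equidistribution quantity, or a parity/winding invariant'') that would detect the obstruction, nor verify that it is nonzero for your $f$ yet must vanish for any measurable integer-valued decomposition. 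Without that verification nothing is proved: every assertion of non-existence in your argument rests on this unexecuted step. A secondary, smaller point: for class b) the membership and the forbidden decompositions are stated \emph{everywhere} rather than a.e., so after establishing the a.e.\ obstruction you should still say a word about why modifying candidate summands on a null set cannot rescue an everywhere integer-valued bounded measurable decomposition (this is easy, since such a modification does not change the a.e.\ equivalence class, but it should be said).
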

For more information on measurable decompositions see also \cite{keleti:1996,keleti:1997,KeletiMeas}.
Next we turn to integer-valued decompositions on Abelian groups. We mention only three exemplary results from \cite{KKKR}:
\begin{theorem}[Gy.{} K\'arolyi, T.{} Keleti, G.{} K\'os, I.{}Z.{} Ruzsa \cite{KKKR}]
\begin{enumerate}[a)]
\item Suppose $f:\ZZ\to\ZZ$ has an $(\alpha_1,\dots, \alpha_n)$-periodic decomposition into  real-valued functions with $a_j\in \ZZ$. Then it has an $(\alpha_1,\dots, \alpha_n)$-periodic integer-valued decomposition.
\item For $\alpha_1,\dots, \alpha_n\in \ZZ$ the class of $\ZZ\to\ZZ$ functions has the decomposition property.
\item Let $A$ be a torsion-free Abelian group. Then the class of bounded $A\to \ZZ$ functions has the decomposition property if and only if $A$ is isomorphic to an additive subgroup of $\QQ$.
\end{enumerate}
\end{theorem}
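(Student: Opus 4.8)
Parts (a) and (b) reduce to results already available, while (c) requires genuinely different arguments for its two implications. Part (a) is exactly Corollary~\ref{cor:integer} read with $A=\ZZ$: since $\ZZ$ is torsion free Abelian and the periods lie in $\ZZ$, any real-valued periodic decomposition of the integer-valued $f$ is automatically converted into an integer-valued one. For (b) I combine this with the real-valued theory on $\ZZ$. Because $\ZZ$ is Abelian, hence amenable, Theorem~\ref{thm:Sdecomp} gives that a \emph{bounded} $f\colon\ZZ\to\RR$ solving \eqref{eq:kern} already admits a bounded real periodic decomposition, which Corollary~\ref{cor:integer} renders integer valued; for general $f\colon\ZZ\to\ZZ$ the existence of a real decomposition is equivalent, by Corollary~\ref{cor:abeltorsfree}, to the family \eqref{eq:b1bNDiffEq} of block difference equations, and these are genuinely present since over $\ZZ$ any two nonzero periods are commensurable. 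In either reading the integer problem on $\ZZ$ is governed by precisely the same difference equations as the real one.

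For the sufficiency in (c) I use that a subgroup $A\le\QQ$ has rank one, so the periods $a_1,\dots,a_n$ lie in a finitely generated, hence cyclic, subgroup $\langle g\rangle\cong\ZZ$, say $a_j=m_jg$ with $m_j\in\ZZ$. Each $\Diff_{a_j}$ shifts only inside cosets of $\langle g\rangle$, so the problem splits over these cosets, every one identified with $\ZZ$; on a coset the restriction $\tilde f$ is a bounded $\ZZ\to\ZZ$ solution of $\Diff_{m_1}\cdots\Diff_{m_n}\tilde f=0$. With $L$ the least common multiple of the $m_j$, the fact that each $m_j\mid L$ lets $\Diff_L$ factor through $\Diff_{m_j}$, so $\Diff_L^{\,n}\tilde f=0$ and hence $\Diff_L\tilde f=0$ by Lemma~\ref{l:commensurable}; thus $\tilde f$ actually lives on the \emph{finite} group $\ZZ/L\ZZ$. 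There a bounded real decomposition exists (Theorem~\ref{thm:Sdecomp}) and is made integer valued by Corollary~\ref{cor:integer}. The decisive point is uniformity: $\ZZ/L\ZZ$ carries only finitely many integer-valued functions bounded in modulus by $\|f\|_\infty$, so fixing one integer decomposition for each gives a single bound valid on every coset, and assembling the coset pieces (axiom of choice for a transversal) produces a bounded integer-valued $(a_1,\dots,a_n)$-periodic decomposition of $f$.

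For the necessity I argue contrapositively. If $A\not\hookrightarrow\QQ$ then $A$ has rank at least two and contains independent $a,b$ spanning a copy of $\ZZ^2$. Take periods $a_1=a,\ a_2=b,\ a_3=a+b$ and choose homomorphisms $\lambda_1,\lambda_2,\lambda_3\colon A\to\RR$ (freely prescribable on a $\QQ$-basis of $\QQ\otimes A$) with $\lambda_1+\lambda_2+\lambda_3=0$ and $\lambda_j(a_j)=0$; concretely, on $\langle a,b\rangle$ with $p=xa+yb$ put $\lambda_2(p)=\gamma x$, $\lambda_1(p)=-\gamma y$, $\lambda_3(p)=\gamma(y-x)$ for an irrational $\gamma$. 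Then $f:=\{\lambda_1\}+\{\lambda_2\}+\{\lambda_3\}$ (fractional parts) is bounded and integer valued, being a sum of three fractional parts whose arguments sum to $0$, and each $\{\lambda_j\}$ is $a_j$-periodic, so $f$ solves \eqref{eq:kern} and even has the displayed bounded \emph{real} decomposition. Were $f=h_1+h_2+h_3$ a bounded \emph{integer}-valued periodic decomposition, the differences $g_j:=\{\lambda_j\}-h_j$ would be bounded $a_j$-periodic with $g_1+g_2+g_3=0$; restricted to $\langle a,b\rangle$ this reads $g_1(y)+g_2(x)+g_3(x-y)=0$, and applying $\Diff_a$ and separating the variables $x$ and $x-y$ forces each $g_j$ to be constant, boundedness ruling out the only alternative of linear growth. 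Then each $\{\lambda_j\}$ would be constant modulo $1$ on $\langle a,b\rangle$, contradicting that $\{\lambda_j\}$ takes a dense set of values in $[0,1)$ by the irrationality of $\gamma$. Hence the decomposition property fails.

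The main obstacle is the interface between the two halves of (c). In the sufficiency one must turn the merely \emph{existential} integer decomposition of Corollary~\ref{cor:integer} into a \emph{uniformly bounded} family over all cosets, which is exactly where rank one enters, through the collapse to the finite group $\ZZ/L\ZZ$. In the necessity the crux is the rigidity that bounded relations $g_1(y)+g_2(x)+g_3(x-y)=0$ on $\ZZ^2$ are constant, which is what converts two-dimensional incommensurability into an obstruction felt only by integer-valued decompositions.
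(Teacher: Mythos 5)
The survey itself gives no proof of this theorem (it defers entirely to \cite{KKKR}), so your proposal has to stand on its own. Parts (a) and (c) are essentially in order: (a) is indeed just Corollary \ref{cor:integer} with $A=\ZZ$, and in (c) both the coset-splitting/finiteness argument for sufficiency and the fractional-part counterexample for necessity, with the rigidity of bounded solutions of $g_1(y)+g_2(x)+g_3(x-y)=0$ on $\ZZ^2$, are the right ideas (the latter is in substance the construction of \cite{KKKR}). One repair is needed in the sufficiency half: you invoke Corollary \ref{cor:integer} on $\ZZ/L\ZZ$, which is not torsion free, and the text explicitly warns that torsion-freeness cannot be dropped there (see the remark after Corollary \ref{cor:integer} and \cite{FHKR}). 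Run the real-to-integer conversion on $\ZZ$ instead: the bounded restriction $\tilde f$ satisfies all the block conditions \eqref{eq:b1bNDiffEq} by Lemma \ref{l:commensurable} applied to $\Diff_{b_j}$, hence has a real decomposition by Corollary \ref{cor:abeltorsfree} and an integer one by Corollary \ref{cor:integer}; the summands are $m_j$-periodic, hence $L$-periodic and bounded, and your finiteness argument for a uniform bound over the cosets then goes through unchanged.

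Part (b) contains a genuine gap. You assert that for $f\colon\ZZ\to\ZZ$ the integer problem is ``governed by precisely the same difference equations as the real one'' and that the block conditions \eqref{eq:b1bNDiffEq} ``are genuinely present since over $\ZZ$ any two nonzero periods are commensurable.'' This is exactly backwards: commensurability is what makes the lcm conditions \emph{non-vacuous} and strictly stronger than the single equation \eqref{eq:kern} (they degenerate to \eqref{eq:kern} only for pairwise incommensurable periods). Concretely, $f=\id$ on $\ZZ$ with periods $1$ and $2$ satisfies $\Diff_1\Diff_2 f=0$, yet the one-block condition $\Diff_{\mathrm{lcm}(1,2)}f=\Diff_2 f=2\neq 0$ fails, and indeed $f$ admits no decomposition into a ($1$-periodic) constant plus a ($2$-periodic, hence bounded) function. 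So read against Definition \ref{d:dprall}, statement (b) would be outright false; it must be understood in the sense of \cite{KKKR}, namely that decomposability into \emph{arbitrary} real-valued periodic functions (equivalently, the full family \eqref{eq:b1bNDiffEq}, by Corollary \ref{cor:abeltorsfree}) implies decomposability into integer-valued ones --- which then reduces to part (a). Your argument for (b) never establishes, and cannot establish, the implication ``\eqref{eq:kern} $\Rightarrow$ \eqref{eq:b1bNDiffEq}'' over $\ZZ$, and this is precisely the point at which it breaks down.
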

For a proof and for an abundance of further information we refer to \cite{KKKR}, and remark that part c) above implies that the class of bounded and integer-valued functions  does not have the decomposition property known also from Theorem \ref{t:nomeasdec}, see  also \cite[Cor.{} 3.4]{KKKR}.

\medskip\noindent Finally, we discuss some aspects of uniqueness of decompositions. Of course, one  cannot expect uniqueness in the original setting, since appropriate constant functions can be added to the summands in \eqref{eq:periodd} not affecting the validity of \eqref{eq:kern}. If one restricts to  certain function classes then only this trivial procedure can produce different decompositions (for incommensurable periods).
\begin{theorem}[M.{} Laczkovich, Sz.{} R\'ev\'esz \cite{laczkovich/revesz:1990}]
For incommensurable periods a periodic decomposition in $\Ell^{\infty}(\RR)$ of a function $f\in\Ell^\infty(\RR)$ is unique up to additive constants.
\end{theorem}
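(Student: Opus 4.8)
The plan is to reduce the uniqueness assertion to a statement about the zero function and then to separate the periods by a Bohr-type mean. Suppose $f=f_1+\cdots+f_n=g_1+\cdots+g_n$ are two periodic decompositions in $\Ell^\infty(\RR)$, with $f_j,g_j$ being $\alpha_j$-periodic. I put $h_j:=f_j-g_j\in\Ell^\infty(\RR)$; then each $h_j$ is $\alpha_j$-periodic and $\sum_{j=1}^n h_j=0$. The theorem is equivalent to showing that any such relation forces every $h_j$ to agree a.e.{} with a constant $c_j$, where $\sum_j c_j=0$; this is precisely ``uniqueness up to additive constants''. So from now on I work only with the homogeneous relation $\sum_j h_j=0$.

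The separating device is the mean
\[
M_\lambda\phi:=\lim_{T\to\infty}\frac1T\int_0^T\phi(x)\,\ee^{-\mathrm{i}\lambda x}\,\dd x\qquad(\lambda\in\RR),
\]
which I claim exists for every $\alpha$-periodic $\phi\in\Ell^\infty(\RR)$ and satisfies $M_\lambda\phi=a_k(\phi)$, the $k$-th Fourier coefficient of $\phi$, when $\lambda=2\pi k/\alpha$ for some $k\in\ZZ$, and $M_\lambda\phi=0$ otherwise. This key lemma is elementary: splitting $[0,N\alpha]$ into full periods and using the $\alpha$-periodicity of $\phi$ gives
\[
\int_0^{N\alpha}\phi(x)\,\ee^{-\mathrm{i}\lambda x}\,\dd x=\Big(\sum_{m=0}^{N-1}\ee^{-\mathrm{i}\lambda m\alpha}\Big)\int_0^\alpha\phi(x)\,\ee^{-\mathrm{i}\lambda x}\,\dd x.
\]
If $\ee^{-\mathrm{i}\lambda\alpha}\neq1$ (i.e.{} $\lambda\notin(2\pi/\alpha)\ZZ$) the geometric sum stays bounded in $N$, so after dividing by $N\alpha$ the limit is $0$; if $\ee^{-\mathrm{i}\lambda\alpha}=1$ the sum equals $N$ and the limit is $\frac1\alpha\int_0^\alpha\phi(x)\ee^{-\mathrm{i}\lambda x}\dd x=a_k(\phi)$. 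Passing from $T=N\alpha$ to general $T$ costs only $O(\|\phi\|_\infty/T)\to0$, so the limit exists in all cases.

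Now I invoke incommensurability. Fix an index $j$ and a nonzero $k\in\ZZ$, and set $\lambda:=2\pi k/\alpha_j$. Since the periods are (pairwise) incommensurable, $\alpha_i\ZZ\cap\alpha_j\ZZ=\{0\}$ for $i\neq j$, so $\lambda\notin(2\pi/\alpha_i)\ZZ$ for every $i\neq j$ (an equality $2\pi k/\alpha_j=2\pi m/\alpha_i$ with $k\neq0$ would place the nonzero number $k\alpha_i=m\alpha_j$ in $\alpha_i\ZZ\cap\alpha_j\ZZ$). By the key lemma $M_\lambda h_i=0$ for $i\neq j$ and $M_\lambda h_j=a_k(h_j)$. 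As each limit $M_\lambda h_i$ exists, $M_\lambda$ is additive on the sum, so applying it to $\sum_i h_i=0$ yields
\[
0=M_\lambda\Big(\sum_{i=1}^n h_i\Big)=\sum_{i=1}^n M_\lambda h_i=a_k(h_j).
\]
Since $j$ and $k\neq0$ were arbitrary, every nonzero Fourier coefficient of every $h_j$ vanishes; by the uniqueness theorem for Fourier series on the torus (recall that $h_j$ restricted to one period lies in $\Ell^1$), each $h_j$ equals a.e.{} its mean $c_j:=a_0(h_j)$. Finally, taking $\lambda=0$ in $\sum_j h_j=0$ gives $\sum_j c_j=0$, completing the proof.

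I expect the only genuinely technical point to be the key lemma, i.e.{} establishing the existence of $M_\lambda\phi$ and its identification with the Fourier coefficients for \emph{merely bounded measurable} (rather than continuous) periodic functions; once this elementary computation is in place, the incommensurability bookkeeping and the appeal to Fourier uniqueness are routine. It is worth noting where the hypotheses enter: boundedness and measurability guarantee $\phi\in\Ell^1$ on each period, make all the integrals finite, and legitimise both the mean $M_\lambda$ and the concluding Fourier-uniqueness step.
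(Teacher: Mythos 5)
Your argument is correct. The survey states this uniqueness theorem without reproducing a proof (it is quoted from \cite{laczkovich/revesz:1990}), so there is no in-text argument to match against; but your route---reduce to a homogeneous relation $\sum_j h_j=0$ with $h_j$ bounded, measurable and $\alpha_j$-periodic, then kill the nonzero Fourier coefficients of each $h_j$ one at a time with the Bohr mean $M_\lambda$---is exactly the spectral-separation device the survey itself uses in the proof of Proposition \ref{p:AP} for incommensurable periods, transplanted from $\AP(\RR)$ to bounded measurable periodic functions. The one point that genuinely needs checking in this wider class is the existence and identification of $M_\lambda\phi$ for merely bounded measurable periodic $\phi$, and your period-splitting computation settles it; the passage from $T=N\alpha$ to general $T$, the incommensurability bookkeeping ($k\alpha_i=m\alpha_j$ with $k\neq 0$ would put a nonzero element into $\alpha_i\ZZ\cap\alpha_j\ZZ$), and the additivity of $M_\lambda$ over the finitely many $h_i$, each of whose means exists, are all handled correctly. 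Two small remarks: the statement tacitly assumes $\alpha_j\neq 0$ (a ``$0$-periodic'' summand would destroy uniqueness), and $\alpha_j$-periodicity of an $\Ell^\infty$ class should be read as $h_j(\cdot+\alpha_j)=h_j$ almost everywhere---both of which your computation accommodates without change, since the integrals ignore null sets. The concluding appeal to uniqueness of Fourier coefficients for $\Ell^1$ functions on $\RR/\alpha_j\ZZ$ is legitimate because $h_j$ is bounded.
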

In the original setting of the decomposition problem, i.e., in $\RR^\RR$ the situation is somewhat more complicated. E.g. consider  $n=2$, $f=f_1+f_2$ with $\Delta_{a_j}f_j=0$, $j=1,2$. Let $h$ be a not identically $0$ function that is both $\alpha_1$- and $\alpha_2$-periodic. Then $f=(f_1+h)+(f_2-h)$ is a different decomposition.

In general two decompositions $f=g_1+\cdots+g_n$ and $f=f_1+\cdots+f_n$ with $\Diff_{\alpha_j}g_j=\Diff_{\alpha_j}f_j=0$ $j=1,\dots,n$ are called \emph{essentially the same} if there are functions $h_{ij}\in\RR^\RR$ for $i,j=1,\dots,n$ with $h_{ii}=0$, $h_{ij}=-h_{ji}$, $\Delta_{\alpha_i}h_{ij}=0$, $\Delta_{\alpha_j}h_{ij}=0$ such that for all $j=1,\dots, n$ one has $ f_j-g_j=\sum_{i=1}^n h_{ij}.$ Note that for incommensurable periods $\alpha_i/\alpha_j\not\in\QQ$ we necessarily have $h_{ij}=$constant  on each equivalence class of $\RR$ (for the equivalence relation as in the paragraph after Theorem \ref{thm:twodecomp}), whence in case of continuity on the whole real line.

Essential uniqueness of decomposition depends very much on the periods:
\begin{theorem}[V.{} Harangi \cite{HarangiFund}] For $\alpha_1,\dots, \alpha_n\in \RR\setminus\{0\}$ the following assertions are equivalent:
\begin{enumerate}[(i)]
\item If any three numbers $\alpha_i, \alpha_j, \alpha_k$ from $\alpha_1,\dots, \alpha_n$ are pairwise linearly independent over $\QQ$, then they are linearly independent over $\QQ$.
\item Any two $(\alpha_1,\dots,\alpha_n)$-periodic decomposition of  a function $f$ are essentially the same, i.e., the decomposition is essentially unique.
\item If a function $f:\RR\to \ZZ$ has a $(\alpha_1,\dots,\alpha_n)$-periodic decomposition into real-valued  functions, then it has also an  integer-valued one.
\end{enumerate}
\end{theorem}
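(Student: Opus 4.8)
The plan is to reduce everything to the finitely generated group of periods and then to study the module of \emph{relations} (syzygies) among the kernels $\ker\Diff_{\alpha_j}$. Put $\Lambda:=\alpha_1\ZZ+\cdots+\alpha_n\ZZ\subseteq\RR$, a free Abelian group of some rank $r\le n$. Since $\alpha_j$-periodicity and the difference equation \eqref{eq:kern} only involve translations inside $\Lambda$, and distinct cosets of $\Lambda$ never interact, the problem decouples over the cosets $c+\Lambda$; on each of them it becomes the decomposition problem for functions on the lattice $\Lambda$ with respect to the shift vectors $\alpha_1,\dots,\alpha_n\in\Lambda$. Under this identification pairwise $\QQ$-independence of $\alpha_i,\alpha_j$ becomes non-parallelism of the corresponding lattice vectors, and condition (i) reads: \emph{any three pairwise non-parallel shift vectors are $\ZZ$-linearly independent in} $\Lambda$. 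Two decompositions are essentially the same precisely when their difference is a sum of \emph{pairwise} relations, i.e.\ tuples $(0,\dots,h,\dots,-h,\dots,0)$ with $h$ simultaneously $\alpha_i$- and $\alpha_j$-periodic. Writing $R$ for the module of all relations $(d_1,\dots,d_n)$ with $\Diff_{\alpha_j}d_j=0$ and $\sum_j d_j=0$, and $R_0\subseteq R$ for the subspace spanned by the pairwise relations, statement (ii) is exactly the equality $R=R_0$.

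For the implication (i)$\Rightarrow$(ii) I would argue by induction on $n$, grouping the periods by commensurability. Commensurable periods may be merged as in Lemma~\ref{l:commensurable} and the reduction preceding \eqref{eq:commreduced}, so one is left with pairwise non-parallel directions, which by (i) are $\ZZ$-independent in $\Lambda$. For such directions a relation $\sum_j d_j=0$ is resolved by a \emph{generalized cocycle argument}: applying $\Diff_{\alpha_1}$ annihilates $d_1$ and leaves a relation of the same type among the remaining directions (since the difference operators commute, each $\Diff_{\alpha_1}d_j$ is still $\alpha_j$-periodic), to which the inductive hypothesis applies; lifting the resulting pairwise pieces back through the lift-up Lemma~\ref{l:liftequiv} peels off one pairwise relation and lowers the number of directions. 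The base case is the three-variable equation $d_1+d_2+d_3=0$ with each $d_j$ constant along direction $\alpha_j$, whose general solution is \emph{affine} along the three directions; under (i) three pairwise non-parallel directions are $\ZZ$-independent, which forces the affine slope to vanish and leaves only pairwise (constant) relations, giving $R=R_0$.

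The reverse implication, that failure of (i) entails failure of (ii), is an explicit construction. If (i) fails there are three pairwise non-parallel periods $\alpha_i,\alpha_j,\alpha_k$ with an integer relation $p\alpha_i+q\alpha_j+s\alpha_k=0$ in which all coefficients are nonzero. Choosing coordinates on $\Lambda$ so that $\alpha_i,\alpha_j$ are coordinate directions and $\alpha_k$ the diagonal, the affine triple surfacing in the base case above (the analogue of $d_i(y)=-y$, $d_j(x)=x$, $d_k=-(x-y)$) is a genuine element of $R$; it does not lie in $R_0$, because every pairwise relation has its two nonzero components \emph{constant} along the relevant directions, whereas here they have nonzero slope. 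Adding this triple to any decomposition produces an essentially different one, so essential uniqueness fails.

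Finally, (ii)$\Leftrightarrow$(iii) is approached through the coefficient sequence $0\to\ZZ\to\RR\to\RR/\ZZ\to0$. An integer-valued $f$ with a real decomposition $f=\sum_j f_j$ admits an integer decomposition iff one can subtract a real relation $(d_1,\dots,d_n)\in R$ with $d_j\equiv f_j\pmod 1$ for all $j$; equivalently, iff the residue tuple $(f_j\bmod 1)_j$ — itself an $\RR/\ZZ$-valued relation — lies in the image of the reduction $R\to R^{\RR/\ZZ}$. When $R=R_0$ the building blocks are functions periodic in two directions, and the divisibility of $\RR$ lifts each block, hence the whole residue tuple, yielding (iii). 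The hard part, and the main obstacle of the whole proof, is the converse: from a bad triangle one must manufacture integer-valued residue data that the pairwise blocks alone cannot realize, producing an integer-valued function with a real but no integer decomposition. This is delicate precisely because the triangular syzygy of the third step is the only possible source of the discrepancy, and one must check that its contribution genuinely survives modulo $1$ rather than being absorbed by the divisibility of $\RR$; carrying out this verification, together with the inductive cocycle bookkeeping in (i)$\Rightarrow$(ii), is where essentially all the difficulty of the theorem is concentrated.
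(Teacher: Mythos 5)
The paper gives no proof of this theorem (it is quoted from Harangi's paper \cite{HarangiFund}), so your attempt has to stand on its own. The framework you set up --- decoupling over cosets of $\Lambda=\alpha_1\ZZ+\cdots+\alpha_n\ZZ$, identifying (ii) with the equality $R=R_0$ of the relation module with its pairwise part, and the affine (Cauchy--Pexider) triple as the source of non-uniqueness when (i) fails --- is the right way to organize (i)$\Leftrightarrow$(ii), and your rank-$3$ cocycle computation for the base case is correct. But the execution has genuine gaps. In the induction for (i)$\Rightarrow$(ii) you need, after applying the inductive hypothesis to $(\Diff_{\alpha_1}d_j)_{j\ge 2}$, a lift $h_{ij}$ that is \emph{simultaneously} $\alpha_i$- and $\alpha_j$-periodic with $\Diff_{\alpha_1}h_{ij}=k_{ij}$. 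Lemma~\ref{l:liftequiv}, which you invoke, does not deliver this: it concerns continuous functions, carries a boundedness hypothesis, and preserves only one periodicity. What is needed is a two-invariance version of Lemma~\ref{lem:diffinvsolv}, whose solvability condition is nonvoid exactly when some multiple of $\alpha_1$ lies in $\alpha_i\ZZ+\alpha_j\ZZ$, i.e.\ (under (i)) when $\alpha_1$ is commensurable with $\alpha_i$ or $\alpha_j$. You propose to dispose of commensurable periods beforehand ``as in Lemma~\ref{l:commensurable}'', but that lemma holds only for bounded functions, and merging $\alpha_i\parallel\alpha_j$ into their least common multiple $\beta$ does not reduce the \emph{uniqueness} problem: a pairwise relation of the merged system involves functions doubly periodic w.r.t.\ $\beta$ and some $\alpha_k$, and splitting such a function back into $(\alpha_i,\alpha_k)$- plus $(\alpha_j,\alpha_k)$-doubly periodic parts is itself a nontrivial decomposition question you do not address. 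A further gap in $\lnot$(i)$\Rightarrow\lnot$(ii): you only check that the affine triple is not a combination of pairwise relations supported on the three bad periods; a sum of pairwise relations involving the remaining $\alpha_l$ can have all components outside $\{i,j,k\}$ cancel without the individual $h_{l\cdot}$ vanishing, and when some $\alpha_l$ is commensurable with one of the three these $h$'s need not be constant on the lattice, so excluding them requires an actual separating functional, not just inspection.

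The equivalence with (iii) is where the proposal really falls short. Your reduction of (iii) to surjectivity of the reduction map from real-valued relations onto $\RR/\ZZ$-valued relations is correct, but the step ``$R=R_0$, pairwise blocks lift, hence (iii)'' silently uses essential uniqueness for $\RR/\ZZ$-\emph{valued} relations, which does not follow formally from the real-valued statement ($\RR/\ZZ$ has torsion --- precisely the situation the paper flags after Theorem~\ref{tf2tf}). More importantly, you explicitly do not prove $\lnot$(ii)$\Rightarrow\lnot$(iii): you describe the required construction as ``the hard part'' without carrying it out, and it genuinely is delicate --- the naive candidate (lift the $\RR/\ZZ$-valued affine triple $n\theta,\,-m\theta,\,(m-n)\theta$ and sum) fails, since the real affine relation with slope $\theta$ realizes exactly these residues and yields an integer decomposition via $-\lfloor n\theta\rfloor$ and its companions. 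Note also that, taken at face value, Corollary~\ref{cor:integer} of this survey asserts the conclusion of (iii) for \emph{every} system of periods, so the precise formulation of (iii) and of the function classes involved demands more care than your sketch affords it. As it stands, one third of the theorem is missing and one direction of (i)$\Leftrightarrow$(ii) rests on an unjustified lifting step.
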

See also \cite{HarangiRAEX}, and \cite{HarangiFund} or \cite{HarangiDipl} for details and further directions.

We end this survey by posing the following problem:
\begin{problem}
Study the periodic decomposition problem for functions $f$ on $\RR$, or on an  Abelian group, with values in $\RR$ mod $1$ (or in an Abelian group).
\end{problem}
%%%%%%%%%%%%%%%%%%%%%%%%%%%%%%%%%%%%%%%%%%%%
%%                                                                                          %%
%%                                                                                          %%
%%                                                                                          %%
%%%%%%%%%%%%%%%%%%%%%%%%%%%%%%%%%%%%%%%%%%%%
\def\cprime{$'$}

%

%%\nocite{*}
%\bibliographystyle{spmpsci}
%\bibliography{perdec-survey}
\end{document}